\documentclass[a4paper,parskip,11pt,listof=totoc,bibliography=totoc, index=totoc,twoside]{scrartcl}

\newcommand{\CommonPath}{.}
\usepackage[english]{babel}
\usepackage{csquotes}

\usepackage{aligned-overset}
\usepackage[right=1.8cm,left=1.9cm]{geometry}
\usepackage{amsmath,amssymb,amsfonts,amsthm,mathtools}
\mathtoolsset{ showonlyrefs, showmanualtags }
\usepackage{dsfont}
\usepackage{enumitem}
\setlist[enumerate]{label=\roman*),ref=\roman*)}
\usepackage{derivative}
\usepackage{array} 
\newcolumntype{L}{>{$}l<{$}}
\usepackage{subfiles}

\usepackage[svgnames]{xcolor}
\usepackage{subcaption}
\usepackage{graphicx}
\usepackage{tikz}
\usetikzlibrary{arrows.meta}
\usetikzlibrary{shadows}
\usepackage{ifthen}
\usepackage{intcalc}
\usepackage{float}

\usepackage{hyperref}
\hypersetup{
    colorlinks,
    citecolor=black,
    filecolor=black,
    linkcolor=black,
    urlcolor=black,
    linktoc=all,  
}

\theoremstyle{plain}
\newtheorem{theorem}{Theorem}[section]

\newtheorem{lemma}[theorem]{Lemma}

\newtheorem{proposition}[theorem]{Proposition}

\newtheorem{corollary}[theorem]{Corollary}

\newtheorem{definition}[theorem]{Definition}

\theoremstyle{definition}
\newtheorem{notation}[theorem]{Notation}

\newtheorem{example}[theorem]{Example}

\newtheorem{remark}[theorem]{Remark}

\newcommand{\Jr}{J}
\newcommand{\Js}{\dot{J}}
\newcommand{\Ir}{I}
\newcommand{\Is}{\dot{I}}
\newcommand{\ar}{a}
\newcommand{\as}{\dot{a}}
\newcommand{\br}{b}
\newcommand{\bs}{\dot{b}}

\newcommand{\N}{\mathbb{N}}
\newcommand{\R}{\mathbb{R}}

\newcommand{\eps}{\varepsilon}
\newcommand{\vphi}{\varphi}
\newcommand{\indicator}[1]{\mathds{1}_{\left\{#1\right\}}}
\newcommand{\smallo}{\mathchoice
    {{\scriptstyle\mathcal{O}}}%
    {{\scriptstyle\mathcal{O}}}%
    {{\scriptscriptstyle\mathcal{O}}}%
    {\scalebox{.6}{$\scriptscriptstyle\mathcal{O}$}}%
  }
\newcommand{\bigO}{\mathcal{O}}

\DeclarePairedDelimiter{\norm}{\lVert}{\rVert}

\DeclareMathOperator{\sgn}{sgn}

\newcommand{\weakstar}{\overset{\ast}{\rightharpoonup}}
\newcommand{\loc}{\text{loc}}

\newcommand{\const}{\text{const}}
\newcommand{\dd}{\mathop{}\!\mathrm{d}}
\newcommand{\del}{\Delta\,}
\newcommand{\delr}{\Delta_r\,}
\newcommand{\dels}{\Delta_s\,}

\NewDocumentCommand{\summ}{O{1} O{\infty} }{\sum\limits_{m=#1}^{#2}}
\NewDocumentCommand{\sumrs}{O{1} O{\infty} O{r+s}}{\sum\limits_{#3=#1}^{#2}}
\NewDocumentCommand{\sumkl}{O{1} O{N} O{k+l}}{\sum\limits_{#3=#1}^{#2}}
\NewDocumentCommand{\sumRM}{O{\le} O{\le}}{\sum\limits_{r+s\ge 2}^{\substack{r#1R \\ s#2M}}}
\NewDocumentCommand{\sumgamma}{O{\Gamma}}{\sum\limits_{(r,s) \in #1}}
\NewDocumentCommand{\sumbar}{O{r} O{r+s} }{\sum\limits_{k=#1}^{#2} \cbar_{k,#2-k}}
\NewDocumentCommand{\sumhat}{O{r} O{r+s} }{\sum\limits_{k=#1}^{#2} \chat_{k,#2-k}}
\newcommand{\monr}{c_{1,0}}
\newcommand{\mons}{c_{0,1}}
\newcommand{\rs}{_{r,s}}
\newcommand{\kl}{_{k,l}}

\newcommand{\rps}{_{r+1,s}}

\newcommand{\rms}{_{r-1,s}}
\newcommand{\rsp}{_{r,s+1}}

\newcommand{\rsm}{_{r,s-1}}
\newcommand{\rpsp}{_{r+1,s+1}}

\newcommand{\Vzw}{V}
\newcommand{\Vzwn}{V^{N}}
\newcommand{\Vn}{V^{N}}
\newcommand{\D}{D}
\newcommand{\Ex}{\mathcal{E}}
\newcommand{\Dn}{D^{N}}
\newcommand{\Dnp}{D^{N+1}}

\newcommand{\cinfzw}{\bar{c}^{z,w}}
\newcommand{\cinfuv}{\bar{c}^{u,v}}
\newcommand{\cbar}{\bar{c}}
\newcommand{\cbargamma}{\bar{c}^\Gamma}
\newcommand{\Brohsigp}{B_{\rho,\sigma}^+}

\usepackage[backend=biber, style=numeric, giveninits=true, sorting=nyt, url=false, doi=false, eprint=true, isbn=false]{biblatex}
\KOMAoptions{abstract=true}
\usetikzlibrary{external}
\numberwithin{equation}{section}

\title{The Two-Component Becker--Döring System: Basic Properties}
\author{Jens Scholten}
\date{\today}

\begin{document}
  \maketitle
  
\begin{abstract}
  \noindent The two-component Becker--Döring model is a two-dimensional coagulation--fragmentation equation that describes the dynamics of clusters built from two different types of monomers growing and shrinking through monomer interaction. We show the existence and mass conservation of solutions and give a partial result regarding uniqueness. Furthermore, we prove under the assumption of detailed balance the entropy equation, followed by a detailed study of the equilibrium states and relative entropies. We can determine the minimum of all relative entropies under sequences of fixed Type I and Type II masses and fully describe the minimising sequences in the weak* topology.
\end{abstract} 
  \tableofcontents  
  
\section{Introduction}
\subsection{One-Component System}
The classical Becker--Döring equations \cite{beckerDoering} are considered to be amongst the simplest of coagulation--fragmentation equations. Particles are assumed to be fully characterised by the number of monomers from which they are composed and may be growing or shrinking, either by condensation or evaporation of a single monomer. For given initial conditions and under the assumption of mass conservation, we can then describe the evolution of the particle distributions. If we denote for any $i\in \N_{>0}$ with $c_i(t)$ the density of particles containing exactly $i$ monomers at time $t$, we can determine its time evolution via 
\begin{equation}\label{ocBD}
  \begin{cases}
    \odv{}{t} c_i(t) = - J_i(t) + J_{i-1}(t) \text{ if }i>1 \text{ and }\\
    \sum\limits_{n=1}^{\infty} n c_n(t) = \sum\limits_{n=1}^{\infty} n c_n(0),
  \end{cases}
\end{equation}
where $J_i(t)$ is the flux between particles of size $i$ and those of size $i+1$. When the rate at which a monomer attaches to a particle of size $i$ is specified as $a_i$ and the rate of detachment from a particle of size $i+1$ as $b_{i+1}$, we can describe 
\[ J_i(t) = a_i c_1 (t) c_i(t) - b_{i+1} c_{i+1}(t).\]
Furthermore, the conservation of mass is formally equivalent to 
\[ \odv{}{t} c_1(t) = -J_1 - \sum\limits_{n=1}^{\infty} J_n(t),\]
which is the formulation that was already introduced in the original study of \eqref{ocBD} by Penrose and Lebowitz \cite{penrose:first}.
System \eqref{ocBD} has received a lot of attention in the mathematical literature \cite{ball:refinedMaxPrinciple,canizo:EDEstimate,penrose:LSWConnection,slemrod:longTime}. Despite its apparent simplicity, many interesting phenomena can be found in \eqref{ocBD}, e.g. phase transitions \cite{penrose:foundations}, metastability \cite{penrose:metastability}, coarsening effects \cite{niethammer:LswLimit} or local exponential stability \cite{canizo:SpectralGab}.
The necessity of a phase transition for supersaturated initial conditions is easily seen. With the notation
\begin{equation}\label{ocDetailedBalance}
  Q_i \coloneqq \prod\limits_{k=1}^{i-1} \frac{a_k}{b_{k+1}},
\end{equation}
all steady states of \eqref{ocBD} are given by a one-parameter family, indexed by the monomer density $z>0$ via $\bar{c}^z_i \coloneqq z^i Q_i.$ Typical coefficients describing droplets in a vapour \cite{niethammer:LswLimit} or a binary alloy on a cubic lattice \cite{kalos:BinaryAlloyCoefficients} will behave like
\[ Q_i \approx \frac{1}{z_s^i} e^{-i^{\frac 2 3}} \text{ for } i\to\infty,\text{ with } 0<z_s <\infty.\]
Hence, there is a one-to-one correspondence from $0\le \rho \le \rho_s \coloneqq \sum\limits_{n=1}^{\infty} n  z_s^n Q_n < \infty$ to $0\le z_\rho \le z_s$ such that the steady state $z_\rho^i Q_i$ has mass $\rho$ and for $\rho>\rho_s$, there is no steady state attaining that mass. The long time behaviour was determined in the seminal paper \cite{penrose:foundations}, where it was shown to depend on the initial mass of the system $\rho \coloneqq \sum\limits_{n=1}^{\infty} n c_n(0)$ via
\begin{equation}\label{ocLongTime}
  c_i(t) \to 
  \begin{cases}
    z_\rho^i Q_i & \text{ if } \rho \le \rho_s\\
    z_s^i Q_i & \text{ else}.
  \end{cases}
\end{equation}
Despite the conservation of mass for all times $\rho(t) = \rho(t)$, if $\rho > \rho_s$, then mass will be lost in the limit $t=\infty$; the aforementioned phase transition. To prove \eqref{ocLongTime}, the following entropy structure of \eqref{ocBD} is used
\begin{equation*}
  \odv{}{t} \underbrace{\sum\limits_{n=1}^{\infty} c_n\left(\ln\left(\frac{c_n}{Q_n}\right) -1\right)}_{\eqqcolon V(c)}
  =-\sum\limits_{n=1}^{\infty} (a_n c_1c_n - b_{n+1}c_{n+1})\ln\left(\frac{a_n c_1 c_n}{b_{n+1} c_{n+1}}\right) \le 0.
\end{equation*}
In \cite{penrose:foundations} they show that the continuity of the entropy $V(c)$ implies that $c(t)$ converges to some steady state. The exact steady state is then determined by a maximum principle \cite{ball:refinedMaxPrinciple}. In the following we will prove, that such a phase transition must also occur in the two-component system (for appropriate coefficients). However, we do not fully characterise the limiting behaviour, which is considerably harder, because $V(c)$ will not be weak* continuous any longer.

Finally, we want to mention the excellent review \cite{hingant:Overview} and the references therein for a more detailed overview of the one-component Becker--Döring system.

\subsection{Two-Component System}
For $(r,s) \in \Omega \coloneqq \{(r,s) \in \N^2_0 \,|\, r+s\ge 1\}$, let us denote the concentration of clusters containing $r$ Type I monomers and $s$ Type II monomers as $c\rs.$ Then, the two-component Becker--Döring equation reads
\begin{equation}\label{mcBD}
  \begin{cases}
    \odv{}{t}c\rs  &= -\delr \Jr\rs - \dels \Js\rs\quad \text{for }r+s\ge2,\\
    \odv{}{t}c_{1,0} &= -\Jr_{1,0}-\Js_{1,0} - \sum\limits_{r+s=1}^\infty  \Jr\rs,\\
    \odv{}{t}c_{0,1} &= -\Jr_{0,1}-\Js_{0,1} - \sum\limits_{r+s=1}^\infty \Js\rs,
  \end{cases}
\end{equation}
where $\Jr\rs$ denotes the flux between particles of size $(r,s)$ and $(r+1,s)$ and $\Js\rs$ the flux between those of size $(r,s)$ and $(r,s+1).$ Furthermore, $\delr$ and $\dels$ are the discrete backwards derivative operators, which are defined for a sequence $(f\rs)_{(r,s)\in\Omega}$ via
\[ \delr f\rs \coloneqq f\rs - f\rms \text{ and }\dels f\rs \coloneqq f\rs - f\rsm, \text{ with the convention }f\rs = 0 \text{ for all }(r,s) \not\in \Omega.\]
As in the one-component case, the coagulation and fragmentation coefficients are assumed to be given
\begin{center}
  \begin{tabular}{| L | l | L |}
    \text{parameter} & description & \text{typical example}\\
    \hline
    \ar\rs \ge 0 &rate for the process $(r,s)+(1,0) \implies (r+1,s)$ &\quad(r+s)^{\frac 2 3}\\
    \as\rs \ge 0&rate for the process $(r,s)+(0,1) \implies (r,s+1)$&\quad(r+s)^{\frac 2 3}\\
    \br\rps \ge 0&rate for the process $(r+1,s) \implies (r,s)+(1,0)$ &\quad \ar\rs \frac{r+1}{r+s+1}\\
    \bs\rsp \ge 0&rate for the process $(r,s+1) \implies (r,s)+(0,1)$&\quad\as\rs\frac{s+1}{r+s+1}
  \end{tabular}
\end{center}
and allow to describe the fluxes as 
\begin{equation}\label{fluxes}
    \Jr\rs \coloneqq \ar\rs c_{1,0}c\rs  - \br\rps c\rps
    \quad\text{ and }\quad
    \Js\rs \coloneqq \as\rs c_{0,1}c\rs  - \bs\rsp c\rsp.
\end{equation}

The total Type I and Type II masses of the system---denoted $\rho$ and $\sigma$ respectively---are given as 
\begin{equation}\label{masses}
  \rho(c) = \sumrs r c\rs \text{ and }\sigma(c) = \sumrs s c\rs.
\end{equation}
If we multiply \eqref{mcBD} with a test-sequence $g\rs$ and sum from $2\le M$ to $N<\infty$, we obtain 
\begin{equation}\label{weakmcBDfinite}
  \begin{split}
    \odv{}{t} \sumrs[M][N] g\rs c\rs  =& -\sumrs[M][N] g\rs (\delr \Jr\rs + \dels \Js\rs)\\
    \overset{\text{summation by parts}}{=}&
    \sumrs[M][N] \delr g\rps \Jr\rs + \dels g\rsp \Js\rs\\
    &+\sumrs[M-1][] g\rps \Jr\rs + g\rsp \Js\rs
    -\sumrs[N][] g\rps \Jr\rs + g\rsp \Js\rs.
  \end{split}
\end{equation}
Taking $M=2$ and formally letting $N\to \infty$, we see that the equations for the monomers are chosen, so that 
\begin{equation}\label{mcBDWeak}
    \sumrs g\rs c\rs(t)  = \sumrs g\rs c\rs(0) + \int_0^t \sumrs (\delr g\rps - g_{1,0}) \Jr\rs + (\dels g\rsp - g_{0,1}) \Js\rs\dd \tau.
\end{equation}
Taking $g\rs = r $ and $g\rs = s$, we formally find $\rho(t) = \rho(0)$ and $\sigma(t) = \sigma(0).$

Even though the system \eqref{mcBD} (usually referred to as birth--death equations) has been used extensively to calculate/estimate nucleation rates for atmospheric droplets \cite{elm:applicationReview,vehkamaeki:CNT}, it has received very little attention in the mathematical literature. We are only aware of a brief and early study as part of dissertation \cite{dunwell:phd}. The goal of the present work, is to establish a sound foundation for the study of \eqref{mcBD}. Hence, we do not want to limit ourselves to some specific coefficients, but establish as general results as possible. For the general existence results, we only need $\ar\rs+\as\rs \in \bigO(r+s)$ (which is know to be ``optimal'' in the one-component system \cite{penrose:foundations}). To study the entropy and equilibria, we need to assume detailed balance and an analogue of $0<z_s<\infty$ given in \eqref{assCoeff}. 

\subsection{Detailed Balance}

Unless energy or entropy goes into our out of the system, the coefficients $\ar\rs,\as\rs,\br\rs,\bs\rs$ should satisfy the detailed balance assumption.
\begin{definition}(Detailed balance)\label{defiDetailedBalance}\\
  We say that $0<\ar\rs,\as\rs,\br\rps,\bs\rsp$ satisfy detailed balance, if there exists a sequence $\cbar\rs> 0$, such that 
  \begin{equation}\label{equilibirum}
    \Jr\rs(\cbar) \equiv 0 \equiv \Js\rs(\cbar) \quad\text{ for all }(r,s)\in\Omega.
  \end{equation}
  Such a $\cbar\rs$ is called equilibrium.
\end{definition}
This is equivalent (Proposition \ref{detailedBalance}) to the existence of a sequence $0<Q\rs$, with $Q_{1,0} = Q_{0,1} =1$, so that 
\begin{equation}\label{bFromQ}
  \br\rps = \ar\rs \frac{Q\rs}{Q\rps} \quad\text{ and }\quad\bs\rsp = \as\rs \frac{Q\rs}{Q\rsp}.
\end{equation}
Note, that \eqref{bFromQ} really is an assumption and not always given, as opposed to the one-component system, where detailed balance always holds via \eqref{ocDetailedBalance}. However, under this assumption we can again find an entropy. Formally we have an entropy $V(c)$ for solutions $c(t)$ of \eqref{mcBD} due to 
\begin{equation}
  \odv{}{t} \underbrace{\sumrs c\rs \left(\ln\left(\frac{c\rs}{Q\rs}\right) - 1\right)}_{\eqqcolon V(c)}
  = - \sumrs \Jr\rs \ln\left(\frac{\ar\rs\monr c\rs}{\br\rps c\rps}\right) + \Js\rs\ln\left(\frac{\as\rs\mons c\rs}{\bs\rsp c\rsp}\right)\le 0,
\end{equation}
which we can establish rigorously under the assumption 
\begin{equation}\label{assCoeff}
   \begin{split}
       \lim\limits_{N\to\infty}\inf\limits_{r+s\ge N} Q\rs^\frac 1 {r+s} >0 \quad\text{ and }\quad
       \lim\limits_{N\to\infty}\sup\limits_{r+s\ge N} Q\rs^\frac 1 {r+s} <\infty,
   \end{split}
\end{equation}
together with a higher moment bound for the initial conditions and a growth condition on the coagulation and fragmentation parameters.
This naturally leads us to the study of all equilibria with finite masses. While this is much more intricate in the two-component system, we can still fully characterise them. Since the goal of this work is to establish a general foundation for \eqref{mcBD}, we do not want to go into detail on how to derive $Q\rs$ for a physical system. But let us mention a very standard form $Q\rs = \lambda^r \mu^s \binom{r+s}{r}e^{-(r+s)^{\frac 2 3}}$ for some $0<\lambda,\mu<\infty,$ used to describe liquid droplet formation \cite{wilemski:Coefficients}. These do satisfy \eqref{assCoeff}, which is a very tame assumption and true for many physically relevant systems \cite{vehkamaeki:CNT}.

\subsection{Outline}

We start our discussion of \eqref{mcBD} in Subsection \ref{existenceMassConservation} with general existence, mass conservation and uniqueness results. The main tool is an a priori estimate (Lemma \ref{mcBDApriori}) based on the de la Vallée-Poussin description for uniform integrability. This follows the ideas of \cite{laurencot:existence}. These results may be summed up as 
\begin{theorem}(Cauchy problem \eqref{mcBD})\\
  Assume that $\ar\rs + \as\rs \in \bigO(r+s)$. Then, there exists a solution to \eqref{mcBD} for all times. Furthermore, any solution will conserve Type I and Type II mass for all times. If additionally $\sumrs (r+s)^2 c\rs(0) < \infty,$ the solution is unique.
\end{theorem}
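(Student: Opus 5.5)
We follow the strategy of Ball--Carr--Penrose and Laurençot, adapted to the two-dimensional index set $\Omega$. The proof has three parts: existence, mass conservation, uniqueness.

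\textbf{Existence.} For $N\in\N$ we consider the truncated system on $\Omega_N\coloneqq\{(r,s)\in\Omega \,|\, r+s\le N\}$. We set $\Jr\rs=0$ whenever $r+s=N$, $\Js\rs=0$ whenever $r+s=N$, and restrict the monomer sums accordingly. This is a finite ODE system with locally Lipschitz right-hand side. Taking $g\rs=r$ and $g\rs=s$ in \eqref{weakmcBDfinite} shows that the truncated masses $\rho^N,\sigma^N$ are exactly conserved. Quasi-positivity of the vector field gives $c^N\rs\ge 0$. Together these yield global solutions with $\sum (r+s)c^N\rs\le \rho(0)+\sigma(0)$.

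Next we derive the a priori estimate of Lemma \ref{mcBDApriori}. By de la Vallée-Poussin applied to the summable sequence $(r+s)c\rs(0)$, we choose a convex $\Phi$ with $\Phi(0)=0$, $\Phi'$ concave and $\Phi(x)/x\to\infty$ such that $\sum \Phi(r+s)c\rs(0)<\infty$. We test \eqref{weakmcBDfinite} with $g\rs=\Phi(r+s)$. The coagulation terms contribute $(\Phi(r+s+1)-\Phi(r+s)-\Phi(1))\ar\rs\monr c\rs$, and similarly for $\as\rs\mons$. Using $\ar\rs+\as\rs\le K(r+s)$ and $\Phi(n+1)-\Phi(n)\le \Phi'(n+1)$, the growth of these terms is bounded by $K(\rho+\sigma)\sum (r+s)\Phi'(r+s+1)c\rs$. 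Since $\Phi'$ is concave, $x\Phi'(x)\le 2\Phi(x)$, so this is at most $C\sum\Phi(r+s)c\rs$. The fragmentation terms have a good sign. Gronwall then gives $\sum\Phi(r+s)c^N\rs(t)\le C(T)$, uniformly in $N$.

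The derivatives $\odv{}{t}c^N\rs$ are bounded uniformly in $N$ for each fixed $(r,s)$; for the monomers this uses the first-moment bound together with $a\in\bigO(r+s)$. Arzelà--Ascoli and a diagonal argument then give a subsequence converging locally uniformly for every $(r,s)$. The uniform $\Phi$-moment bound makes the tails $\sum_{r+s\ge M}(r+s)c^N\rs$ uniformly small. This lets us pass to the limit in the integrated form of the monomer equations, whose infinite sums involve $\ar\rs c\rs$ and $\br\rps c\rps$. I expect one additional subtlety at this step: the fragmentation coefficients $\br,\bs$ are not assumed bounded. Following Ball--Carr--Penrose, I would first treat test sequences $g$ with finite support. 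For the monomer equation, which involves $\sum\br\rps c\rps$, I would rewrite it via mass conservation, expressing $c_{1,0}$ as $\rho-\sum_{r+s\ge2}rc\rs$. This avoids needing any bound on $b$.

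\textbf{Mass conservation.} The limit $c$ satisfies $\rho(c(t))=\rho(0)$, because the uniform $\Phi$-integrability passes the truncated conservation law to the limit. For an \emph{arbitrary} solution, defined as one with $\sup_t\sum(r+s)c\rs<\infty$ satisfying \eqref{mcBD} with integrable fluxes, we argue directly. Take $g\rs=r$ truncated at $r+s\le N$ in \eqref{weakmcBDfinite}. The remaining boundary terms are $\int_0^t\sum_{r+s=N}(r+1)\Jr\rs+r\Js\rs$. The coagulation part of this boundary term is nonnegative, which gives the inequality $\rho(t)\le\rho(0)$. For the reverse inequality, the coagulation boundary part is controlled by $K\int_0^t\sum_{r+s=N}(r+s)^2c\rs\,\monr$. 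I would integrate this over $N$ via a tail-sum identity and use the first-moment bound to show the boundary terms vanish along a subsequence of $N$. This step, showing that the fragmentation boundary terms vanish without any growth bound on $b$, is the main obstacle. The first thing I would try is the one-component trick: write the mass identity as $\rho(t)=\sum_{r+s<N}rc\rs(t)+\text{tail}(t)$, derive a differential inequality for the tail that has only the outgoing coagulation flux with a bad sign, and show that this flux term is $\smallo(1)$ in $L^1(0,t)$ by dominated convergence.

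\textbf{Uniqueness.} Let $c,d$ be two solutions with finite second moment. The first step is to propagate the second moment, by a Gronwall estimate using $g=(r+s)^2$ and $a\in\bigO(r+s)$. Set $e=c-d$ and test with $g\rs=(r+s)\sgn(e\rs)$, following Laurençot's $L^1$-weighted contraction argument. The fragmentation contributions are nonpositive or telescope. The coagulation differences split as $\ar\rs(\monr c\rs-d_{1,0}d\rs)=\ar\rs\monr e\rs+\ar\rs d\rs e_{1,0}$. The second piece is bounded by $K|e_{1,0}|\sum(r+s)^2d\rs$, which is finite by the second-moment bound. Gronwall then yields $\sum(r+s)|e\rs|(t)\le 0$.
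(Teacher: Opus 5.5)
The main gap is mass conservation for an \emph{arbitrary} solution. The theorem asserts it, and your uniqueness step also uses it to control $|e_{1,0}|$. Testing $g\rs=r$ on $\{r+s\le N\}$ gives
\[
\sum_{r+s\le N} r\bigl(c\rs(t)-c^0\rs\bigr) = -\int_0^t \sum_{r+s>N}\Jr\rs\dd\tau - \int_0^t\sum_{r+s=N}\bigl((r+1)\Jr\rs + r\Js\rs\bigr)\dd\tau .
\]
The last term, taken with its sign, splits into a nonpositive coagulation part and a nonnegative fragmentation part. So dropping the coagulation part does \emph{not} give $\rho(t)\le\rho(0)$. It only gives $\rho(t)\le\rho(0)+\liminf_N(\text{fragmentation part})$, and that fragmentation part is exactly what you leave open as the ``main obstacle''.

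Your fallback, the one-component tail trick, does not work on the triangle. The weights $r+1$ and $r$ are not constant along $r+s=N$, so the layer term is not a constant times the time derivative of a tail sum. Moreover, differentiating $\sum_{r+s\ge N} r c\rs$ already presupposes the weak formulation on an infinite region, which is what you are trying to prove.

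The paper avoids this by testing on rectangles $\{r\le R,\ s\le S\}$:
\begin{enumerate}
\item For fixed $R$, the face $s=S$ carries weights at most $R$, so it vanishes as $S\to\infty$ by integrability of the fluxes.
\item The face $r=R$ carries the \emph{constant} weight $R+1$. It therefore telescopes into $(R+1)\sum_{r>R}\sum_{s}(c\rs(t)-c^0\rs)$, which tends to $0$ by the tail bound of Lemma \ref{mcBDApriori}.
\end{enumerate}
Staying on your triangle also works, and needs no growth condition on $\br,\bs$. Set $\beta_N=\int_0^t\sum_{r+s=N}(\br\rps c\rps+\bs\rsp c\rsp)\dd\tau$ and $\alpha_N=\int_0^t\sum_{r+s=N}c\rs\dd\tau$. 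The fragmentation part is at most $(N+1)\beta_N$, and $\sum_N\beta_N<\infty$ by the flux integrability in Definition \ref{mcBDSolutions}(ii). The coagulation part is at most $CN^2\alpha_N$, and $\sum_N N\alpha_N<\infty$. Hence $\gamma_N=N\alpha_N+\beta_N$ is summable, so $\liminf_N N\gamma_N=0$, and both parts vanish along a common subsequence.

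The same layer term is silently dropped in your uniqueness argument. Write $\Ir\rs,\Is\rs$ for the flux differences. Testing with $(r+s)\sgn(e\rs)$ on $\{r+s\le N\}$ leaves
\[
\int_0^t\sum_{r+s=N}(r+s+1)\bigl(\sgn(e\rps)\Ir\rs+\sgn(e\rsp)\Is\rs\bigr)\dd\tau .
\]
Because of the sign factors, this term neither telescopes nor has a sign. You must show $N\int_0^t\sum_{r+s=N}(|\Jr\rs|+|\Js\rs|)\dd\tau\to0$ for both solutions. The paper isolates exactly this step (see the remark after Theorem \ref{mcBDUniqueness}) and handles it with the second moment and de la Vallée-Poussin. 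Saying that ``the fragmentation contributions telescope'' does not address it.

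Similarly, you derive your moment estimates only for the truncated problem, but uniqueness needs them for arbitrary solutions. This does work, because the weight depends only on $r+s$, as in Lemma \ref{mcBDApriori}, but it has to be argued.

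Finally, in the existence part the monomer derivatives are \emph{not} bounded uniformly in $N$. $\odv{}{t}c^N_{1,0}$ contains $\sum\br\rps c^N\rps$, which is only bounded in $L^1(0,t)$. Smallness of $\sum_{r+s\ge M}(r+s)c^N\rs$ does not control $\int_0^t\sum_{r+s\ge M}\br\rps c^N\rps\dd\tau$.

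Your rewrite $c^N_{1,0}=\rho^N-\sum_{r+s\ge2}rc^N\rs$ does give locally uniform convergence of the monomers. That is a genuine shortcut compared to the paper's equicontinuity argument. To obtain condition (ii) and the mild monomer equation for the limit, you still need two things:
\begin{enumerate}
\item the $L^1$ bound on $\int_0^t\sum(\br\rps c^N\rps+\bs\rsp c^N\rsp)\dd\tau$, obtained by testing with $r+s$;
\item either the paper's uniform tail estimate for these fluxes or the layer argument above.
\end{enumerate}
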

The existence and mass conservation parts are complete generalisations of the one-component system. However, the uniqueness result is less sharp. This is effectively caused by the problem of establishing the weak formulation for general test sequences.

After a characterisation of the detailed balance assumption (Proposition \ref{detailedBalance}), we rigorously prove the entropy equation  in Subsection \ref{sectionEntropyEquation}. This naturally warrants a detailed study of equilibria and relative entropies. \\
We start this in Section \ref{sectionSteadyStates}, where we describe equilibria with finite mass as limits of finite steady states, defined on some finite subset $\Gamma\subset\Omega$. We will demonstrate, that all equilibria can be determined by a two parameter family  over some two-dimensional manifold $\Ex\subset \R_{\ge0}^2$ and are given as $\cinfzw\rs \coloneqq z^r w^s Q\rs$, for all $(z,w) \in \Ex$. The entropy structure between steady states will allow us to find a nice description of all equilibria, which states that for any masses $\rho,\sigma >0$, there exists a unique $(z,w) \in \Ex$, such that 
\begin{equation}
  \ln\left(\frac z u\right)\left(\rho_0 - \rho(\cinfzw)\right) + \ln\left(\frac w v\right) \left(\sigma_0 - \sigma(\cinfzw)\right) \ge 0 \text{ for all } 0< u,v \text{ with }(u,v) \in \Ex.
\end{equation}
As in the one-component system, starting from the entropy $V(c)$, we can naturally find entropies relative to any equilibrium $\cinfzw\rs$, which we will denote $H[c|\cinfzw]$ for any $(z,w)\in \Ex$. In the one-component system, it is know that solutions to \eqref{ocBD} minimise the relative entropy \cite{penrose:foundations}. Therefore, it is natural to study the minimisation of the relative entropies under the constraint of fixed Type I  and Type II mass. We can determine the minimum (Lemma \ref{relativeEntropyTwoSteadyStates} and Theorem \ref{relativeEntropyMinimum}) and describe the minimising sequences in the weak* topology (Corollary \ref{relativeEntropyMinimizingSequences}). Finally we will extend these results to the quasi steady state.

\section{Basic Properties}\label{basicProperties}
\subsection{Existence, Mass Conservation and Uniqueness}\label{existenceMassConservation}
We begin our study of \eqref{mcBD} with the general Cauchy problem. Existence, conservation of mass and positivity of solutions are mostly a straight forward extension of the one-component system. As is often the case with coagulation equations, proving uniqueness turns out to be a delicate problem. In particular, the strong results from the one-component system do not translate as easily.\\
Let us start with the functional analytic setting of our solutions spaces. Remember, that $\Omega \coloneqq \{(r,s) \in \N^2_0 \,|\, r+s\ge 1\}$ and all the sums, have to be understood as ranging over subsets of $\Omega.$
    \begin{definition}\textit{(Solution space)}\label{solutionSpace}\\
        We define the Banach space
\begin{equation*}
X\coloneqq \left\{(c\rs)_{r,s\in\Omega} \subset \R \;\middle|\; \sumrs r |c\rs| < \infty \text{ and } \sumrs s |c\rs| < \infty \right\},
\end{equation*}
together with its norm
\begin{equation*}
    \norm{c}_X\coloneqq \sumrs (r+s) |c\rs|.
\end{equation*}
Furthermore, we define the positive cone of $X$ as
\[X^+\coloneqq \left\{c\in X \mid c\rs \ge0 \text{ for all } r,s\in\Omega \right\}.\]

\end{definition}
    \begin{proposition}\textit{(Weak* topology)}\label{weakTopology}\\
        Let $(c^n)\subset X$ and $c\in X$. Then, $c^n \weakstar c$, iff
\begin{equation}\label{proposition:weakTopology}
  \sup\limits_n \norm{c^n}<\infty \text{ and } c\rs^n \to c\rs \text{ as }n\to\infty \text{ for all }r+s\ge 1.
\end{equation}

\end{proposition}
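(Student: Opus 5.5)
To make sense of weak* convergence I first fix the predual of $X$. Since $X$ is a weighted $\ell^1$ space with weight $r+s$, I identify it with $Y^*$, where
\[
Y\coloneqq\Bigl\{(g\rs)_{(r,s)\in\Omega}\subset\R \;\Bigm|\; \lim_{r+s\to\infty}\tfrac{g\rs}{r+s}=0\Bigr\},\qquad \norm{g}_Y\coloneqq\sup_{(r,s)\in\Omega}\tfrac{|g\rs|}{r+s}.
\]
The pairing is $\langle c,g\rangle\coloneqq\sumrs g\rs c\rs$. The map $g\mapsto (g\rs/(r+s))$ is an isometric isomorphism of $Y$ onto $c_0(\Omega)$, and $c\mapsto ((r+s)c\rs)$ is an isometry of $X$ onto $\ell^1(\Omega)$. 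Hence the duality $c_0^*=\ell^1$ gives $Y^*\cong X$ isometrically. The statement $c^n\weakstar c$ then means $\langle c^n,g\rangle\to\langle c,g\rangle$ for every $g\in Y$.

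For the direction ``$\Rightarrow$'', I argue as follows. A weak* convergent sequence is pointwise bounded on the Banach space $Y$, so the uniform boundedness principle gives $\sup_n\norm{c^n}_X<\infty$. Testing with $g=\delta_{(r,s)}$, which lies in $Y$ and has finite support, gives $c^n\rs\to c\rs$ for each $(r,s)\in\Omega$.

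For the direction ``$\Leftarrow$'', set $K\coloneqq\sup_n\norm{c^n}_X$ and note that $\norm{c}_X\le K$ by Fatou's lemma. Fix $g\in Y$ and $\eps>0$, and choose $N$ such that $|g\rs|\le\eps(r+s)$ for all $r+s>N$. I split the difference into a finite part and a tail:
\[
|\langle c^n-c,g\rangle|\le\sum_{r+s\le N}|g\rs|\,|c^n\rs-c\rs|+\eps\sum_{r+s>N}(r+s)\bigl(|c^n\rs|+|c\rs|\bigr).
\]
The first sum has finitely many terms, so it tends to $0$ by pointwise convergence. The second is bounded by $2K\eps$. Letting $n\to\infty$ and then $\eps\to0$ gives $\langle c^n,g\rangle\to\langle c,g\rangle$.

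The only step that needs care is the identification of the predual $Y$, since the weak* topology depends on it. Once $Y$ is fixed as above, both directions are routine; the tail estimate relies essentially on test sequences satisfying $g\rs=\smallo(r+s)$.
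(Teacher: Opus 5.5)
Your proof is correct and follows the paper's route: the paper also realises $X$ as the dual of $Y=\{y : y_{r,s}/(r+s)\to 0\}$, normed by $\sup_{r,s}|y_{r,s}|/(r+s)$, and then checks that the stated condition is equivalent to $\sum y_{r,s}c^n_{r,s}\to\sum y_{r,s}c_{r,s}$ for all $y\in Y$. You spell out the verification the paper leaves to the reader: uniform boundedness gives one direction, and a finite-part/tail split gives the other, which is the quantitative form of the paper's remark that finitely supported sequences are dense in $Y$.
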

\begin{proof}
    Note that $X = Y^*$, with 
    \[Y \coloneqq \left\{y\rs \middle| \frac{y\rs}{r+s} \to 0 \text{ as } r+s \to \infty\right\},\]
    and norm $\norm{y}_Y = \sup \frac {|y\rs|}{r+s}$. This identification can easily be seen, once we notice, that sequences with only finitely many non zeros are dense in $Y$. Now one quickly verifies that \eqref{proposition:weakTopology} holds true, iff 
    \[\sumrs c\rs^n y\rs \to \sumrs c\rs y\rs \text{ for all }y\in Y.\]
\end{proof}
As in \cite{penrose:foundations}, this clearly yields a metric.
    \begin{proposition}\textit{(Weak* metric)}\label{weakMetric}\\
        Let $\rho,\sigma \ge0$ and define $\Brohsigp \coloneqq \left\{c \in X^+ \middle| \sumrs r c\rs \le \rho, \sumrs s c\rs \le \sigma\right\}$. Then, $\Brohsigp$ equipped with the metric $d(x,y) = \sumrs |x\rs - y\rs|$ is compact and its induced topology coincides with the weak* topology, i.e. for $c^n,c \in \Brohsigp$, we have
\[ c^n \weakstar c \iff d(c^n,c)\to 0.\]

\end{proposition}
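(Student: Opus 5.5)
The plan is to prove the two implications of the equivalence separately and then obtain compactness from sequential compactness. Throughout, we use that $d$ is a genuine metric on $\Brohsigp$. Indeed, every $c\in\Brohsigp$ satisfies $\sumrs |c\rs| \le \sumrs (r+s)c\rs \le \rho+\sigma$, so $d$ is finite on $\Brohsigp$. It is an $\ell^1$ distance, hence symmetric, satisfies the triangle inequality, and separates points. We also note that elements of $\Brohsigp$ satisfy $\norm{c}_X\le \rho+\sigma$, so the norm bound in Proposition \ref{weakTopology} holds automatically.

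For ``$\Leftarrow$'': if $d(c^n,c)\to0$, then $|c^n\rs - c\rs|\le d(c^n,c)\to0$ for every $(r,s)\in\Omega$. Since $\sup_n\norm{c^n}_X\le\rho+\sigma$, Proposition \ref{weakTopology} gives $c^n\weakstar c$.

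For ``$\Rightarrow$'': by Proposition \ref{weakTopology}, weak* convergence gives pointwise convergence $c^n\rs\to c\rs$. Fix $\eps>0$ and choose $N$ with $N\ge 2(\rho+\sigma)/\eps$. We split the sum defining $d$ into the finite part $r+s<N$ and the tail $r+s\ge N$. The tail is controlled uniformly in $n$ by the mass bound:
\[ \sum_{r+s\ge N} |c^n\rs - c\rs| \le \frac1N \sum_{r+s\ge N}(r+s)\left(c^n\rs + c\rs\right) \le \frac{2(\rho+\sigma)}{N}\le \eps. \]
The finite part tends to zero by pointwise convergence. Hence $\limsup_n d(c^n,c)\le\eps$ for every $\eps>0$, so $d(c^n,c)\to0$. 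This uniform tail estimate is the key step, and the same estimate is reused in the compactness argument below.

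For compactness it suffices, since $d$ is a metric, to show sequential compactness. Let $(c^n)\subset\Brohsigp$. For each $(r,s)$ we have $0\le c^n\rs\le\rho+\sigma$, so a diagonal argument over the countable set $\Omega$ yields a subsequence with $c^n\rs\to c\rs$ for every $(r,s)$. The limit lies in $\Brohsigp$. Clearly $c\rs\ge0$. For every finite $N$, $\sum_{r+s\le N} r c\rs=\lim_n\sum_{r+s\le N} r c^n\rs\le\rho$, and letting $N\to\infty$ gives $\rho(c)\le\rho$; the bound $\sigma(c)\le\sigma$ follows in the same way. Thus $c\in\Brohsigp$, and the ``$\Rightarrow$'' argument (which only used pointwise convergence and membership in $\Brohsigp$) shows $d(c^n,c)\to0$ along the subsequence.

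Finally, the equivalence of sequential convergence identifies the two topologies. The weak* topology restricted to the bounded set $\Brohsigp$ is metrisable, because the predual $Y$ is separable, so convergent sequences determine it. Alternatively, the identity map from the compact space $(\Brohsigp,d)$ to $\Brohsigp$ with the Hausdorff weak* topology is a continuous bijection, hence a homeomorphism. The only point needing care is this passage from sequences to topologies; everything else is routine.
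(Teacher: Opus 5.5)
Your proof is correct and matches the argument the paper relies on. The paper gives no separate proof: after Proposition \ref{weakTopology} it only refers to the one-component case in \cite{penrose:foundations}. Your argument supplies the omitted details. You combine that proposition with the uniform tail bound $\sum_{r+s\ge N}|c^n_{r,s}-c_{r,s}|\le 2(\rho+\sigma)/N$ and a diagonal extraction for compactness. You then match the two topologies via the continuous bijection from the compact space $(\Brohsigp,d)$ onto the Hausdorff weak* topology.
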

With this, we define solutions in the usual manner, as mild solutions.
    \begin{definition}\textit{(Solutions to \eqref{mcBD})}\label{mcBDSolutions}\\
        Fix $0<T\le\infty$. We call a function $c \colon [0,T) \to X^+$ a solution to \eqref{mcBD} with initial data $c^0 \in X^+$, if 
\begin{enumerate}
    \item for all $(r,s)\in\Omega$ the function $c\rs \colon [0,T)\to\R_{\ge0}$ is continuous and  $\sup\limits_{[0,T)}\norm{c(t)}<\infty$,
    \item the transitions are in $L^1_\loc$, i.e.
        \begin{equation*}
            \int_0^t \sumrs (\ar\rs+\as\rs + \br\rps + \bs\rsp) c\rs(\tau) \dd\tau < \infty \text{ for all } t\in[0,T)\text{ and }
        \end{equation*}
    \item the equations are satisfied in a mild sense, i.e. for any $t\in[0,T)$
        \begin{equation}
            \begin{split}
                c_{1,0}(t) &= c^0_{1,0} - \int_0^t \Jr_{1,0}(c(\tau)) + \Js_{1,0}(c(\tau)) + \sumrs \Jr\rs(c(\tau)) \dd\tau,\\
                c_{0,1}(t) &= c^0_{0,1} - \int_0^t \Jr_{0,1}(c(\tau)) + \Js_{0,1}(c(\tau)) + \sumrs \Js\rs(c(\tau)) \dd \tau \text{ and}\\
                c\rs(t) &= c^0\rs - \int_0^t \delr \Jr\rs(c(\tau)) + \dels \Js\rs(c(\tau))\dd\tau \text{ for all }r+s\ge2.
            \end{split}
        \end{equation}
\end{enumerate}

\end{definition}
Our proofs of existence, mass conservation and uniqueness heavily rely on the de la Vallée-Poussin description of uniform integrability---similarly to \cite{laurencot:existence}. Let us state the exact results we use for the readers convenience.
    \begin{theorem}\textit{(de la Vallée-Poussin \cite{delaVallee})}\label{delaVallee}\\
        Let $(X,\Sigma,\mu)$ be a measure space and $B\subset L_1(X)$ be bounded. Then, $B$ is uniformly integrable, if and only if there is a convex function $\varphi\in C^\infty([0,\infty))$, such that 
\[ \sup\limits_{f\in B}\int_X \varphi(|f|) \dd \mu < \infty,\]
and $\varphi$ satisfies the following properties:
$\varphi(0) = 0 =\varphi^\prime(0)$, $\varphi^\prime$ is concave and strictly positive on $(0,\infty)$,
\[ \lim\limits_{r \to \infty} \varphi^\prime(r) = \lim\limits_{r \to \infty} \frac{\varphi(r)} r = \infty\]
and 
\[ \lim\limits_{r \to \infty} \frac{\varphi^\prime(r)}{r^{p-1}} = 0 \text{ for all } p\in(1,2].\]

    \end{theorem}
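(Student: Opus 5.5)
The plan is to prove the two implications separately. I use the standard notion that a bounded family $B\subset L_1$ is uniformly integrable if $\sup_{f\in B}\int_{\{|f|>M\}}|f|\dd\mu\to0$ as $M\to\infty$.

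\emph{Sufficiency.} Assume $\varphi$ exists with the listed properties. Since $\varphi(r)/r\to\infty$, for every $M>0$ we have $|f|\le \bigl(\sup_{r\ge M} r/\varphi(r)\bigr)\varphi(|f|)$ on the set $\{|f|>M\}$. Hence
\[
\sup_{f\in B}\int_{\{|f|>M\}}|f|\dd\mu\le \sup_{r\ge M}\frac{r}{\varphi(r)}\,\sup_{f\in B}\int_X\varphi(|f|)\dd\mu\longrightarrow 0 .
\]
Only superlinearity is used here; the remaining properties are needed only for the converse.

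\emph{Necessity.} The construction has four steps.

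First, pick a strictly increasing sequence $M_n\to\infty$ with
\[
\sup_{f\in B}\int_{\{|f|>M_n\}}|f|\dd\mu\le 2^{-n}.
\]
I will also take $M_n$ growing fast, say $M_{n+1}\ge 2M_n$ and $M_n\ge e^n$. Let $g(r)\coloneqq\#\{n : M_n<r\}$. This is a nondecreasing step function with $g\to\infty$ and $g(r)\le \log r$ for large $r$. For $\Phi(r)\coloneqq\int_0^r g$ we get $\Phi(|f|)\le |f|\,g(|f|)=\sum_n|f|\indicator{|f|>M_n}$, so $\sup_B\int\Phi(|f|)\le 1$.

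Second, make the slope concave. Build a piecewise linear, concave, nondecreasing $h_0\le g$ with $h_0(0)=0$ and $h_0\to\infty$. This is done by choosing breakpoints inductively: go up by one unit over intervals whose lengths are nondecreasing, each interval ending before the next jump of $g$ is "used up"; the fast growth of $M_n$ makes this possible. Replacing $h_0$ by $\min(h_0,\ln(1+r))$ keeps it concave, nondecreasing and unbounded, and additionally forces $h_0(r)/r^{p-1}\to0$ for every $p>1$.

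Third, smooth it. This is the step I expect to be the main technical point. Extend $h_0$ to $(-\infty,0)$ linearly with slope $h_0'(0+)$, so that it stays concave on $\R$. Convolve with a mollifier $\eta$ supported in $[0,1]$ to get $\tilde h=h_0*\eta$, which is smooth, concave and nondecreasing. Because $h_0$ is nondecreasing and $\eta$ is supported to the right, $\tilde h\le h_0$. Set $h\coloneqq\tilde h-\tilde h(0)$, so that $h(0)=0$ and $h\le h_0+C$ with $C\coloneqq|\tilde h(0)|$. A concave nondecreasing function that is unbounded cannot be constant on any interval, since otherwise its slope would be $\le 0$ afterwards. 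Hence $h$ is strictly increasing, and in particular $h>0$ on $(0,\infty)$.

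Fourth, define $\varphi(r)\coloneqq\int_0^r h$. Then $\varphi\in C^\infty$, $\varphi(0)=\varphi'(0)=0$, and $\varphi$ is convex because $\varphi'=h$ is increasing. Moreover $\varphi'=h$ is concave and tends to $\infty$. The quotient $\varphi(r)/r\to\infty$ because $h\to\infty$, and $\varphi'(r)/r^{p-1}\to0$ because of the logarithmic cap. Finally,
\[
\varphi(|f|)\le\Phi(|f|)+C|f|,
\]
so $\sup_B\int\varphi(|f|)\dd\mu\le 1+C\sup_B\|f\|_{L_1}<\infty$, using that $B$ is bounded in $L_1$. This is exactly the additive constant lost in the smoothing step.
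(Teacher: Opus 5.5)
The paper gives no argument of its own here; it only refers to Theorem 7.1.6 of the coagulation monograph. Your sufficiency part is correct. The architecture of your necessity part is also the standard construction: levels $M_n$, the step function $g$ with $\Phi=\int_0^r g$, a concave unbounded replacement for $g$, one-sided mollification, then integration. But Step 2 asks for something impossible. Since $g\equiv 0$ on $[0,M_1]$, any nondecreasing $h_0$ with $h_0(0)=0$ and $h_0\le g$ must vanish on $[0,M_1]$. Then your own Step 3 argument applies: the slope of a concave function is nonincreasing, so once it is $0$ it stays $\le 0$. Hence a concave nondecreasing $h_0$ is identically $0$, contradicting $h_0\to\infty$. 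No choice of breakpoints can fix this, so the final inequality $\varphi(|f|)\le\Phi(|f|)+C|f|$ is not justified as written.

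The repair costs only one more linear term, which your last step already absorbs through the $L_1$-bound: ask for $h_0\le g+1$ instead. Concretely, with $M_0\coloneqq 0$, let $h_0$ be the piecewise linear function with $h_0(M_n)=n$. Its slopes $1/(M_{n+1}-M_n)$ are nonincreasing, because $M_{n+1}-M_n\ge M_n\ge M_n-M_{n-1}$ by $M_{n+1}\ge 2M_n$. So $h_0$ is concave, nondecreasing and unbounded, with finite slope $1/M_1$ at $0$. Moreover, $h_0\le 1$ on $[0,M_1]$ and $h_0\le n+1=g+1$ on $(M_n,M_{n+1}]$. Since $M_n\ge e^n$, you also get $h_0(r)<\log r+1$ for $r>M_1$, so the logarithmic cap is not even needed. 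Steps 3 and 4 then go through unchanged and give $\varphi(r)\le\Phi(r)+(C+1)r$, hence $\sup_{f\in B}\int_X\varphi(|f|)\,d\mu\le 1+(C+1)\sup_{f\in B}\|f\|_{L_1}$. Finally, uniform integrability is often defined instead by uniform absolute continuity, i.e. $\sup_{f\in B}\int_E|f|\,d\mu\to0$ as $\mu(E)\to0$. For $L_1$-bounded $B$ this agrees with your tail condition, because $\mu(\{|f|>M\})\le\sup_{f\in B}\|f\|_{L_1}/M$, so your choice of definition costs nothing.
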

We will also use the following standard result for these functions $\varphi$.
    \begin{lemma}\textit{}\label{helpDelaVallee}\\
        Let $\varphi\colon [0,\infty) \to [0,\infty)$ satisfy the properties from Theorem \ref{delaVallee}. Then, for any $x\ge0$ and $y\ge0$ 
\[(x+y) [\varphi(x+y) - \varphi(x) -\varphi(y)] \le 2 [y\varphi(x) + x \varphi(y) ]\]
holds true.

    \end{lemma}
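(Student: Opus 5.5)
The plan is a direct calculus argument. It uses only that $\varphi(0)=0=\varphi^\prime(0)$ and that $\varphi^\prime$ is concave on $[0,\infty)$; none of the growth properties from Theorem \ref{delaVallee} are needed. It rests on two elementary inequalities for $\varphi^\prime$, which are then combined by splitting the prefactor $(x+y)$ as $x+y$. Write $\Delta(x,y)\coloneqq\varphi(x+y)-\varphi(x)-\varphi(y)$. If $x=0$ or $y=0$, both sides vanish because $\varphi(0)=0$, so we may assume $x,y>0$.

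\emph{Step 1: subadditivity of $\varphi^\prime$.} A concave function $g$ on $[0,\infty)$ with $g(0)=0$ is subadditive, i.e. $g(a+b)\le g(a)+g(b)$. To see this, write $a=\tfrac{a}{a+b}(a+b)+\tfrac{b}{a+b}\cdot 0$, which gives $g(a)\ge\tfrac{a}{a+b}g(a+b)$. The analogous bound holds for $g(b)$, and adding the two gives the claim. Applying this to $g=\varphi^\prime$ gives
\[
\Delta(x,y)=\int_0^y\bigl(\varphi^\prime(x+t)-\varphi^\prime(t)\bigr)\dd t\le y\,\varphi^\prime(x).
\]
Exchanging the roles of $x$ and $y$ gives likewise $\Delta(x,y)\le x\,\varphi^\prime(y)$.

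\emph{Step 2: $x\varphi^\prime(x)\le 2\varphi(x)$.} By concavity and $\varphi^\prime(0)=0$, we have $\varphi^\prime(t)\ge \tfrac{t}{x}\varphi^\prime(x)$ for $t\in[0,x]$. Hence
\[
\varphi(x)=\int_0^x\varphi^\prime(t)\dd t\ge \frac{x}{2}\varphi^\prime(x).
\]

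\emph{Step 3: combination.} Use the bound $\Delta\le y\varphi^\prime(x)$ on the $x$-part of the prefactor and $\Delta\le x\varphi^\prime(y)$ on the $y$-part:
\[
(x+y)\Delta(x,y)=x\Delta+y\Delta\le xy\,\varphi^\prime(x)+xy\,\varphi^\prime(y)\le 2y\,\varphi(x)+2x\,\varphi(y),
\]
where the last inequality is Step 2. The only step that is not routine is noticing this pairing of terms. It is what produces the cross terms $y\varphi(x)$ and $x\varphi(y)$ instead of $x\varphi(x)$-type terms.
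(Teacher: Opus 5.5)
Your proof is correct: subadditivity of the concave $\varphi'$ (which vanishes at $0$) gives $\Delta(x,y)\le\min\{y\varphi'(x),\,x\varphi'(y)\}$, concavity again gives $x\varphi'(x)\le 2\varphi(x)$, and pairing $x\Delta$ with the first bound and $y\Delta$ with the second closes the estimate. All multiplications by $x,y\ge0$ preserve the inequalities.

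The paper gives no argument of its own here; it defers to Proposition 7.1.9 of \cite{coagulation:book}. The argument usually given for this inequality in that literature (I am recalling it, not quoting the reference) is second order. One fixes $y$ and sets $P(x)\coloneqq 2\bigl(x\varphi(y)+y\varphi(x)\bigr)-(x+y)\Delta(x,y)$. One then checks $P(0)=0$, $P'(0)=2\varphi(y)-y\varphi'(y)\ge0$, and $P''(x)=2\bigl(y\varphi''(x)-\varphi'(x+y)+\varphi'(x)\bigr)+(x+y)\bigl(\varphi''(x)-\varphi''(x+y)\bigr)\ge0$, where both brackets are nonnegative by concavity of $\varphi'$.

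That route is mechanical once the auxiliary function is written down. However, it needs second derivatives of $\varphi$, which are available here only because Theorem \ref{delaVallee} provides $\varphi\in C^\infty$.

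Your route stays at first order. It works for any $C^1$ function with $\varphi(0)=\varphi'(0)=0$ and $\varphi'$ concave, and it uses neither convexity of $\varphi$ nor the growth conditions. Along the way it also yields the slightly sharper estimate $(x+y)\Delta(x,y)\le xy\bigl(\varphi'(x)+\varphi'(y)\bigr)$.
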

\begin{proof}
  A proof for Theorem \ref{delaVallee} and Lemma \ref{helpDelaVallee} can be found in \cite{coagulation:book}, where they are stated as Theorem 7.1.6. and Proposition 7.1.9. respectively.
\end{proof}
The heart of our existence, mass conservation and uniqueness results is the following a priori estimate.
    \begin{lemma}\textit{(A priori estimate)}\label{mcBDApriori}\\
        Assume that $\ar\rs + \as\rs \in \bigO(r+s)$ and that for some $\beta \ge 0$ and $\varphi$ as in Theorem \ref{delaVallee}, we have 
\[ \sumrs \varphi(r+s) (r+s)^\beta c^0\rs < \infty.\]
Then, there exists a constant $K<\infty$, such that for any solution of \eqref{mcBD} on $[0,T)$ with initial data $c^0$, we have 
\[ \sumrs \varphi(r+s) (r+s)^\beta c\rs(t) < K \text{ for all }0\le t < T.\]
This constant $K$ depends on a bound of 
\[ \max \left( \varphi(1),\varphi(2),\sup\limits_{x\ge2}\frac{x}{\varphi(x)}, \beta,T, \sup\limits_{t\in[0,T)}||c(t)||, \sup\Big\{\frac{\ar\rs + \as\rs}{r+s}\Big\},\sumrs \varphi(r+s) (r+s)^\beta c^0\rs\right).\]

    \end{lemma}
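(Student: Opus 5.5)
We set $\psi(x) \coloneqq \varphi(x) x^\beta$ and $g\rs \coloneqq \psi(r+s)$. The aim is a Gronwall inequality for $M(t) \coloneqq \sumrs g\rs c\rs(t)$. Since $g$ is unbounded, the weak formulation \eqref{mcBDWeak} cannot be used directly. Instead we work with the finite version \eqref{weakmcBDfinite}, for which the mild formulation of Definition \ref{mcBDSolutions} is justified (finitely many equations, each integrable). We take $M=2$ and truncate at $N$. It is convenient to replace $g$ by the cut-off $g^N\rs \coloneqq \psi(\min(r+s,N))$ summed over all of $\Omega$. This is a bounded test sequence, constant for $r+s\ge N$, so the weak formulation holds for it: the tail contributions are finite because $\sup_t\norm{c(t)}<\infty$ and the transitions are in $L^1_\loc$.

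For this test sequence the weak formulation becomes
\[
\sumrs g^N\rs c\rs(t) = \sumrs g^N\rs c^0\rs + \int_0^t \sumrs \bigl(\delr g^N\rps - g^N_{1,0}\bigr)\Jr\rs + \bigl(\dels g^N\rsp - g^N_{0,1}\bigr)\Js\rs \dd\tau .
\]
The key observation is that $\delr g^N\rps = \dels g^N\rsp = \psi(r+s+1)-\psi(r+s)\ge 0$ for $r+s<N$, and $=0$ for $r+s\ge N$. We split each flux into its gain and loss parts. The fragmentation parts $-\br\rps c\rps$ and $-\bs\rsp c\rsp$ multiply nonnegative increments, so they contribute with a negative sign and are dropped. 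The same holds for the $-g^N_{1,0}$, $-g^N_{0,1}$ terms multiplying the coagulation parts. The remaining pieces are bounded by a constant times $\int_0^t (c_{1,0}+c_{0,1})\sum \dots$, since $g_{1,0}\Jr\rs$ with a fragmentation part gives a positive contribution $\varphi(1)\br\rps c\rps$. These monomer-loss-of-fragmentation terms are the real obstacle, because $\br$ is not assumed bounded. We handle them by not discarding them separately: we pair $\varphi(1)\br\rps c\rps$ with $-(\psi(r+s+1)-\psi(r+s))\br\rps c\rps$. For $r+s\ge1$ we have $\psi(r+s+1)-\psi(r+s)\ge \psi(2)-\psi(1)$, and $\psi(2)-\psi(1)\ge\varphi(1)$ should follow from convexity together with $\varphi(0)=0$, since $\varphi(2)\ge 2\varphi(1)$. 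The combined term is therefore nonpositive, at least after possibly replacing $\varphi$ near the origin. This is the step where the dependence on $\varphi(1)$ and $\varphi(2)$ enters the constant.

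For the coagulation part we use the mean value theorem and the concavity of $\varphi'$:
\[
\psi(x+1)-\psi(x)\le \psi'(x+1)\le C\Bigl(\varphi'(x+1)x^\beta + \beta\varphi(x+1)x^{\beta-1}\Bigr).
\]
Concavity together with $\varphi'(0)=0$ gives $x\varphi'(x)\le 2\varphi(x)$, and $\varphi(x+1)\le C\varphi(x)$ for $x\ge1$. Alternatively one can invoke Lemma \ref{helpDelaVallee} with $y=1$, which gives exactly $\varphi(x+1)-\varphi(x)\le \varphi(1)+\frac{2}{x+1}(\varphi(x)+x\varphi(1))$. Either route yields
\[
\psi(r+s+1)-\psi(r+s)\le C\,\frac{\psi(r+s)+(r+s)^{\beta}}{r+s}.
\]
Combined with $\ar\rs+\as\rs\le A(r+s)$ and $c_{1,0},c_{0,1}\le \sup_t\norm{c(t)}$, this gives
\[
\sumrs g^N\rs c\rs(t)\le \sumrs g\rs c^0\rs + C\int_0^t \Bigl(\sumrs g^N\rs c\rs + \sumrs (r+s)^\beta c\rs\Bigr)\dd\tau .
\]

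The lower-order term $\sumrs (r+s)^\beta c\rs$ is absorbed using $\sup_{x\ge2} x/\varphi(x)$. Indeed, $(r+s)^\beta\le \sup(x/\varphi(x))\,\psi(r+s)/(r+s)\le C\psi(r+s)$ for $r+s\ge2$, while the finitely many small sizes are bounded by the norm. On the truncation region $r+s\ge N$ the same bound holds with $g^N$ replaced by $\psi(N)$, up to constants. Gronwall then gives $\sumrs g^N\rs c\rs(t)\le K$ uniformly in $N$ and $t<T$. Monotone convergence as $N\to\infty$ completes the proof, with $K$ depending only on the listed quantities. The main technical care is in the fragmentation pairing near the origin and in verifying the weak formulation for the cut-off test sequence.
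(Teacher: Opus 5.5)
\textbf{Gap: an uncompensated fragmentation term in the cut-off region.} Your overall strategy is the paper's. You run Gr\"onwall on a truncated $\psi$-moment with $\psi(x)=x^\beta\varphi(x)$, drop fragmentation wherever the weight increments are nonnegative, and bound the increments via Lemma \ref{helpDelaVallee} and $\sup_{x\ge2}x/\varphi(x)$. One step fails as written, however.

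Because you also test the monomer equations, with weight $g^N_{1,0}=g^N_{0,1}=\varphi(1)$, every flux carries the coefficient $\delr g^N_{r+1,s}-\varphi(1)$ (resp.\ $\dels g^N_{r,s+1}-\varphi(1)$). Your pairing of $+\varphi(1)b_{r+1,s}c_{r+1,s}$ with $-(\psi(r+s+1)-\psi(r+s))b_{r+1,s}c_{r+1,s}$ is correct for $r+s<N$. No modification of $\varphi$ is needed there: $\psi$ is convex for every $\beta\ge0$, since $x\varphi'(x)\ge\varphi(x)$ gives $\psi''\ge\beta(\beta+1)x^{\beta-2}\varphi\ge0$, and $\psi(2)-\psi(1)=2^\beta\varphi(2)-\varphi(1)\ge\varphi(1)$.

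For $r+s\ge N$, however, your cut-off makes the increment vanish, and the nonnegative term
\[
\varphi(1)\int_0^t\sum_{r+s\ge N}\bigl(b_{r+1,s}c_{r+1,s}+\dot b_{r,s+1}c_{r,s+1}\bigr)\dd\tau
\]
has nothing to compensate it. So the claim that the combined fragmentation term is nonpositive is false on that region, and your displayed Gr\"onwall inequality omits this term.

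\textbf{How to repair it.} There are two easy fixes.
\begin{enumerate}
\item The omitted term is the tail of a series integrable on $[0,t]$ by Definition \ref{mcBDSolutions} ii), so it is some nondecreasing $\epsilon_N(t)\to0$. Gr\"onwall then gives $\sum g^N c(t)\le\bigl(\sum\psi(r+s) c^0_{r,s}+\epsilon_N(t)\bigr)e^{Ct}$, and monotone convergence concludes.
\item More simply, give the monomers weight $0$. Then every increment is nonnegative, including $\psi(2)-0$ at $r+s=1$, so all fragmentation terms can be dropped. The ``real obstacle'' you identified disappears, and the monomers themselves are bounded by $\sup_t\norm{c(t)}$.
\end{enumerate}

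The second fix is essentially the paper's route. The paper sums only the mild equations for $2\le r+s\le N$, with the uncut weight $\psi(r+s)$. It bounds the lower boundary term $\psi(2)\sum_{r+s=1}(J_{r,s}+\dot J_{r,s})$ by dropping fragmentation. It converts the upper boundary term $-\psi(N+1)\int_0^t\sum_{r+s=N}(J_{r,s}+\dot J_{r,s})\dd\tau$, via the mild equations for $r+s>N$, into $\psi(N+1)\sum_{r+s>N}(c^0_{r,s}-c_{r,s}(t))\le\sum_{r+s>N}\psi(r+s)c^0_{r,s}$. That bound is uniform in $N$ and needs no appeal to vanishing tails.

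\textbf{A minor point.} The lower-order term in your inequality should be $\sum_{r+s<N}(r+s)^\beta c_{r,s}$, not the full sum, which is not a priori finite for $\beta>1$. With that restriction, no separate estimate on the region $r+s\ge N$ is needed.
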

\begin{proof}
  In this proof, the constant $K$ may change in every line without renaming, but always satisfies the promised dependencies and is in particular independent of $N$. Also we will denote for a one-dimensional sequence $g_m$ the discrete backwards derivative as $\del g_m \coloneqq g_m - g_{m-1}.$\\
For $t\in(0,T)$ and $N\in \N$ arbitrary,  Definition \ref{mcBDSolutions} (iii) implies 
\begin{equation}\label{mcBDApriori:eq1}
    \begin{split}
      \sumrs[2][N] & \vphi(r+s)(r+s)^\beta c\rs(t)
      \overset{\text{def sol.}}{=} \sumrs[2][N] \vphi(r+s) c^0\rs - \int_0^t \sumrs[2][N]\vphi(r+s)(r+s)^\beta (\delr \Jr\rs + \dels \Js\rs)\dd \tau\\
      \overset{\text{summation by parts}}&{=} \int_0^t \sumrs[2][N] \del\Big( \vphi(r+s+1)(r+s+1)^\beta\Big) (\Jr\rs+\Js\rs)\dd\tau +\int_0^t \sumrs[1][]2^\beta \vphi(2)(\Jr\rs+\Js\rs)\dd \tau\\
      &\qquad\quad -\int_0^t \sumrs[N][] \vphi(N+1)(N+1)^\beta (\Jr\rs+\Js\rs)\dd \tau.
    \end{split}
\end{equation}
Now, we note that Lemma \ref{helpDelaVallee} with $x=r+s$ and $y=1$ implies for $r+s \ge 2$
\begin{equation}\label{mcBDApriori:eq2}
  \begin{split}
    \del\Big(& \vphi(r+s+1)(r+s+1)^\beta\Big) \\
    &=  (r+s+1)^{\beta-1}\del \vphi(r+s+1) + \varphi(r+s) \del (r+s+1)^\beta\\
    \overset{\text{Lemma \ref{helpDelaVallee}}}&= \underbrace{(r+s+1)^\beta}_{\le K (r+s)^{\beta-1}} \Big(\underbrace{(r+s+1)}_{\le K \varphi(r+s)} \varphi(1)+ 2[\varphi(r+s) + \underbrace{(r+s)}_{\le K \varphi(r+s)} \varphi(1)]\Big) + \varphi(r+s) \del (r+s+1)^\beta\\
    \overset{\del (r+s+1)^\beta \le K (r+s)^{\beta-1}}&\le K (r+s)^{\beta-1} \varphi(r+s).
  \end{split}
\end{equation}
Since $x^\beta\varphi(x)$ is a monotonically increasing function, we have $\del\Big(\vphi(r+s+1)(r+s+1)^\beta\Big)\ge 0$ and hence 
\begin{equation}
  \begin{split}
    \int_0^t &\sumrs[2][N] \del\Big( \vphi(r+s+1)(r+s+1)^\beta\Big) (\Jr\rs+\Js\rs)\dd\tau\\
    &\le \int_0^t \sumrs[2][N] \del\Big( \vphi(r+s+1)(r+s+1)^\beta\Big) (\ar\rs\monr c\rs +\as\rs\mons c\rs)\dd\tau\\
    \overset{\text{\eqref{mcBDApriori:eq2}}}&\le K \int_0^t \sumrs[2][N] \vphi(r+s+1)(r+s+1)^\beta c\rs \dd\tau.
  \end{split}
\end{equation}
The lower boundary term can be estimated by $\int_0^t \sumrs[1][]2^\beta \vphi(2)(\Jr\rs+\Js\rs)\dd \tau\le K.$ And lastly, we find
\begin{equation}
  \begin{split}
    -\int_0^t & \sumrs[N][] \vphi(N+1)(N+1)^\beta (\Jr\rs+\Js\rs)\dd \tau \\
    \overset{\text{Def. \ref{mcBDSolutions} (ii)}}&= \vphi(N+1)(N+1)^\beta \int_0^t \sumrs[M+1] (\delr \Jr\rs+\ \dels \Js\rs)\dd \tau \\
    \overset{\text{Def. \ref{mcBDSolutions} (iii)}}&= \vphi(N+1)(N+1)^\beta \sumrs[M+1] c\rs(0) - c\rs(t) \\
    \overset{\varphi(x) x^\beta \text{monotone}}&\le \sumrs[M+1] \varphi(r+s) (r+s)^\beta c\rs(0).
  \end{split}
\end{equation}
Putting everything into \eqref{mcBDApriori:eq1} yields
\begin{equation}
      \sumrs[2][N] \vphi(r+s)(r+s)^\beta c\rs(t) \le K\left(1+\int_0^t \sumrs[2][N] \vphi(r+s) (r+s)^\beta c\rs\dd\tau\right),
\end{equation}
so we can conclude with Grönwall.

\end{proof}
Now, we can prove that solutions exist, given that the coagulation coefficients do not grow too quickly. This is completely analogous to the one-component system. Also, the same result was already shown in \cite{dunwell:phd} generalising the proof of \cite{penrose:foundations}. We still want to show our proof, based on Theorem \ref{delaVallee} (which is considerably shorter), so that the reader can understand the subtle difference between the one-component and two-component system that we will encounter in the uniqueness proof.
    \begin{theorem}\textit{(Existence to \eqref{mcBD})}\label{mcBDExistenceDela}\\
        Let $c^0\in X^+$ and assume that $\ar\rs + \as\rs \in \mathcal{O}(r+s).$
Then, there exists a solution $c$ to \eqref{mcBD} with initial data $c^0$ on $[0,\infty)$.

    \end{theorem}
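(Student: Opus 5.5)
We follow the truncation-and-compactness scheme of \cite{penrose:foundations,laurencot:existence}. For $N\ge 2$ we consider the finite system obtained by restricting \eqref{mcBD} to $\Omega_N\coloneqq\{(r,s)\in\Omega \mid r+s\le N\}$. All fluxes leaving $\Omega_N$ are switched off, i.e.\ we set $\Jr\rs=\Js\rs=0$ whenever $r+s\ge N$, and the monomer equations sum only over $r+s\le N-1$. This is a finite-dimensional ODE with locally Lipschitz (quadratic) right-hand side, so it has a local solution $c^N$ with initial data $c^0$ restricted to $\Omega_N$.

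\emph{Global existence and uniform bounds for the truncation.} Positivity is obtained by the usual argument: if $c\rs^N=0$, then every term of $\odv{}{t}c^N\rs$ is nonnegative, since the loss terms are all proportional to $c\rs^N$. Testing with $g\rs=r$ and $g\rs=s$ in the finite version \eqref{weakmcBDfinite} shows exactly $\rho(c^N(t))=\rho(c^N(0))\le\rho(c^0)$ and likewise for $\sigma$, because the boundary fluxes vanish. Hence $c^N$ stays in the compact set $\Brohsigp$ with $\rho=\rho(c^0)$ and $\sigma=\sigma(c^0)$, and so exists for all $t\ge0$. Next, de la Vallée-Poussin (Theorem \ref{delaVallee}) applied to the single finite measure $(r+s)c^0\rs$ on $\Omega$ gives a $\varphi$ with $\sum\varphi(r+s)c^0\rs<\infty$. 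We then run the proof of Lemma \ref{mcBDApriori} with $\beta=0$ on the truncated system; this is easier, since the upper boundary term is absent. The result is
\[\sup_N\sup_{t\le T}\sumrs \varphi(r+s)c^N\rs(t)\le K_T,\]
where the constant is independent of $N$ because it depends only on the listed quantities. Since $\varphi(x)/x\to\infty$, this gives uniform smallness of the mass tails, $\sum_{r+s\ge M}(r+s)c^N\rs(t)\le \sup_{x\ge M}\frac{x}{\varphi(x)}K_T$.

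\emph{Compactness.} For each fixed $(r,s)$, $\odv{}{t}c^N\rs$ is bounded uniformly in $N$ and $t$. This uses $\ar\rs+\as\rs\le K(r+s)$ and the mass bound; for the monomer equations the infinite sums are bounded by $K\norm{c^N}\,(c_{1,0}+c_{0,1})+\sum(\br\rps+\bs\rsp)c\rps$. Hence the $c^N\rs$ are equicontinuous. By Arzelà--Ascoli and a diagonal argument we extract a subsequence with $c^N\rs\to c\rs$ locally uniformly in $t$ for every $(r,s)$, and by Proposition \ref{weakMetric} we get $c(t)\in\Brohsigp$. Properties (i) of Definition \ref{mcBDSolutions} follow directly.

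\emph{Passing to the limit (main obstacle).} The equations for $r+s\ge 2$ involve only finitely many terms, so they pass to the limit immediately. The difficulty lies in the infinite sums $\sum\Jr\rs$ and $\sum\Js\rs$ in the monomer equations, together with the $L^1_\loc$ condition (ii). The coagulation parts $\ar\rs\monr c\rs$ are controlled by the uniform tail estimate $\sum_{r+s\ge M}(r+s)c^N\rs\to0$ uniformly in $N$ and $t$. This gives dominated convergence of $\sum\ar\rs c^N_{1,0}c^N\rs$ to its limit, and the same argument handles the $\as\rs$ terms. The fragmentation parts $\sum\br\rps c\rps$ carry no growth assumption, so they have to be treated differently. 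Here we use the weak form of the truncated equation with $g\rs=\indicator{r+s\ge M}$, in the spirit of the boundary computation in Lemma \ref{mcBDApriori}. This expresses $\int_0^t\sum_{r+s\ge M-1}(\br\rps c^N\rps+\bs\rsp c^N\rsp)$ through the change of the tail mass and the coagulation fluxes. Both of these are uniformly small in $N$ as $M\to\infty$, being bounded by tail sums of $(r+s)c^N\rs$ times constants. By Fatou this gives (ii) for the limit, and it also gives the convergence of the time integrals of the monomer equations, so (iii) holds. Since $T$ is arbitrary, a diagonal argument in $T$ yields a solution on $[0,\infty)$.
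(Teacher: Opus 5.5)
Your overall scheme matches the paper's: truncation, the $N$-uniform bound on $\sum\varphi(r+s)c^N_{r,s}$ from Lemma \ref{mcBDApriori} with $\beta=0$, Arzelà--Ascoli for $r+s\ge2$, tail control of the fragmentation terms, and Fatou for (ii). However, the compactness step contains a false claim, and it sits exactly at the point the paper's proof is built around.

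You assert that $\odv{}{t}c^N_{1,0}$ and $\odv{}{t}c^N_{0,1}$ are bounded uniformly in $N$ and $t$. Yet your own bound contains $\sum_{r+s\ge1}(\br\rps c^N\rps+\bs\rsp c^N\rsp)$, and nothing is assumed about the growth of $\br,\bs$. This sum is bounded only in $L^1(0,T)$ uniformly in $N$, not in $L^\infty(0,T)$, as you yourself note in your last paragraph. For instance, with rapidly growing $\br$ it can be infinite at $t=0$ for the untruncated data. So equicontinuity of the monomers does not follow, and all you get for free is $c^N_{1,0}\weakstar c_{1,0}$ in $L^\infty$. That is not enough, for three reasons:
\begin{enumerate}
\item you need $c^N_{1,0}(t)\to c_{1,0}(t)$ pointwise for the left-hand side of the mild monomer equations;
\item you need strong convergence to pass to the limit in quadratic monomer terms such as $\ar_{1,0}(c^N_{1,0})^2$, $\as_{1,0}c^N_{1,0}c^N_{0,1}$ and $\ar_{0,1}c^N_{1,0}c^N_{0,1}$;
\item you need continuity of $c_{1,0},c_{0,1}$ for Definition \ref{mcBDSolutions} (i).
\end{enumerate}

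The repair uses ingredients you already have, but it must come before the limit passage. The paper turns your fragmentation-tail estimate into $\int_0^T\sum_{r+s\ge M}(|\Jr^N\rs|+|\Js^N\rs|)\dd\tau\to0$ as $M\to\infty$, uniformly in $N$. This gives the uniform modulus of continuity $|c^N_{1,0}(t_2)-c^N_{1,0}(t_1)|\le C(M)|t_2-t_1|+\int_{t_1}^{t_2}\sum_{r+s\ge M}(|\Jr^N\rs|+|\Js^N\rs|)\dd\tau$, and only then does the paper apply Arzelà--Ascoli to the monomers.

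A shorter alternative is to use exact mass conservation of the truncated system, $c^N_{1,0}(t)=\sum_{r+s\le N}r c^0_{r,s}-\sum_{2\le r+s\le N}r c^N_{r,s}(t)$, and its analogue for $c^N_{0,1}$. Together with your uniform tail bound and the locally uniform convergence for $r+s\ge2$, this gives locally uniform convergence of the monomers directly.

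A smaller point concerns the test sequence. Testing with $g_{r,s}=\indicator{r+s\ge M}$ only yields the flux through the single level $r+s=M-1$. To isolate $\int_0^t\sum_{r+s\ge M}(\br\rps c^N\rps+\bs\rsp c^N\rsp)\dd\tau$, you must test with $g_{r,s}=(r+s)\indicator{r+s\ge M}$. The resulting boundary term $M\int_0^t\sum_{r+s=M-1}(\Jr^N\rs+\Js^N\rs)\dd\tau$ is then rewritten, via the indicator identity, as $M\sum_{r+s\ge M}(c^N_{r,s}(t)-c^N_{r,s}(0))$, which is the paper's computation.
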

\begin{proof}
  In this proof, $K$ denotes a constant independent of $n,r,s$ and $C(r,s)$ or $C(r+s)$ a constant independent of $n$. Both may change from line to line without renaming.\\
We cut the equation off at $r+s=n$ via $\Jr\rs = \Js\rs = 0$ for $r+s\ge n$, i.e. we set $\ar^n\rs = \as^n\rs = \br^n\rps = \bs^n\rsp = 0$ for all $r+s\ge n$ and $c^{0n} = 0$ for all $r+s >n$ and get solutions $c^n\ge 0$ by classical ODE theory. We have conservation of mass and in particular $c^n\rs \le \norm{c^0}$ and for $r+s\ge2$ also $|\odv{}{t}c^n\rs| \le C(r,s) \norm{c^0}^2$. So up to some diagonal sequences we have
\begin{equation*}
    \begin{split}
        &c^n\rs \to c\rs \quad \text{ uniformly on compact intervals for }r+s\ge2\\
        &c^n_{1,0}\weakstar c_{1,0} \quad\text{ in }L^\infty\\
        &c^n_{0,1}\weakstar c_{0,1} \quad\text{ in }L^\infty.
    \end{split}
\end{equation*}
Let us start by showing part (ii) from the definition, i.e. that the transitions are in $L^1_\loc$. To this end we use $g\rs = r+s$ in \eqref{weakmcBDfinite} and take $M=2$ and $N=\infty$. From this we get 
\begin{equation*}
    \begin{split}
        \odv{}{t}\sumrs[2](r+s)c^n\rs + \sumrs[2] \br\rps c^n\rps + \bs\rsp c^n\rsp
    &= \sumrs[2] (\ar\rs c^n_{1,0}+\as\rs c^n_{0,1})c^n\rs + 2\sumrs[1][] \Jr\rs+\Js\rs\\
    &\le K(1+\sumrs[2](r+s)c^n\rs).
    \end{split}
\end{equation*}
Now Grönwall and integrating this up yields the desired bounds uniformly in $n$.
Next, we want to upgrade the convergence of the monomers to uniform convergence. To that end we apply Theorem \ref{delaVallee} with the measure space $(\N,\mathcal P(\N), \sumrs[m][]c^0\rs)$ and the single sequence $B = \{(m)_{m\in \N}\}$ to find a $\varphi$ as in Theorem \ref{delaVallee} such that $\summ \vphi(m)\sumrs[m][]c^0\rs<\infty$. By Lemma \ref{mcBDApriori} with $\beta = 0$, $\sumrs[2]\vphi(r+s)c^n\rs$ is bounded on compact intervals uniformly in $n$. Now, we cannot bound the derivatives of the monomers easily, because we only have $\sumrs \br\rps c^n\rps \in L^1$ and not in $L^\infty$. So we try to bound the integrated version. For this purpose, we first start with the following, again using $g\rs= r+s$ in \eqref{weakmcBDfinite} and integrating from $0$ to $\tau$, to obtain for any $M$
\begin{equation*}
    \begin{split}
        \sumrs[M] &(r+s)c^n\rs(\tau) + \int_0^\tau \sumrs[M] \br\rps c^n\rps + \bs\rsp c^n\rsp \dd \sigma\\
        &= \sumrs[M] (r+s) c^n\rs(0) + \int_0^\tau \sumrs[M]\underbrace{(\ar\rs c^n_{1,0} + \as\rs c^n_{0,1})}_{\le K(r+s)}c^n\rs
        + \underbrace{\int_0^\tau \sumrs[M-1][] M (\Jr^n\rs + \Js^n\rs)}_{= M (\sumrs[M]c^n\rs(\tau)-\sumrs[M] c^n\rs(0))}.
    \end{split}
\end{equation*}
Next, we integrate this from $0$ to $t$ 
\begin{equation*}
    \begin{split}
        \int_0^t &\sumrs[M](r+s)c^n\rs(\tau) +\int_0^t\int_0^\tau \sumrs[M]\br\rps c^n\rps + \bs\rsp c^n\rsp \dd \sigma \dd \tau\\
                 &\le \underbrace{\int_0^t \sumrs[M](r+s)c^n\rs(0)\dd\tau}_{\to 0} + \underbrace{\int_0^t\frac{M}{\vphi(M)} \sumrs[M]\vphi(r+s)c^n\rs \dd \tau}_{\to 0} + K \int_0^t\int_0^\tau \sumrs[M](r+s)c^n\rs\dd \sigma \dd\tau,
    \end{split}
\end{equation*}
where Grönwall tells us, that the first line goes to zero as $M\to\infty$ uniformly in $n$. With this we can show that $\int_0^t \sumrs[M] |\Jr^n\rs| + |\Js^n\rs| \dd\tau \to 0$ as $M\to\infty$ uniformly in $n$. Fix $T>2$ and let $t_1<t_2\in[0,\frac{T}{2}]$, then we have for any $M$ 
\begin{equation}\label{mcBDExistenceProof:eq1}
    \begin{split}
      \int_{t_1}^{t_2}&\sumrs[M] |\Jr^n\rs|+|\Js^n\rs| \dd\tau\\
       & \le \int_0^T \sumrs[M] K(r+s)c^n\rs \dd \tau + \int_0^{\frac{T}{2}} \underbrace{\frac{T}{2}}_{\le T-\tau}\sumrs[M] (\br\rps c^n\rps + \bs\rsp c^n\rsp) \dd \tau\\
       &\le\int_0^T \sumrs[M] K(r+s)c^n\rs \dd \tau + \underbrace{\int_0^{T} (T-\tau)\sumrs[M](\br\rps c^n\rps + \bs\rsp c^n\rsp) \dd \tau}_{=\int_0^T\int_0^\tau \sumrs[M](\br\rps c^n\rps + \bs\rsp c^n\rsp)\dd \sigma \dd \tau},
    \end{split}
\end{equation}
which can be made arbitrarily small by taking $M$ large uniformly in $t_1,t_2$ and $n$ according to the above estimate. This yields uniform continuity of the monomer densities via
\begin{equation}
    \begin{split}
        |c^n_{1,0}(t_1)&-c^n_{1,0}(t_2)|+|c^n_{0,1}(t_1)-c^n_{0,1}(t_2)|\\
        &= \left|\int_{t_1}^{t_2} \Jr^n_{1,0}+\Js^n_{1,0} + \sumrs \Jr^n\rs \dd \tau\right|
        +\left|\int_{t_1}^{t_2} \Jr^n_{0,1}+\Js^n_{0,1} + \sumrs \Js^n\rs \dd \tau\right|\\
        &\le C(M) |t_1-t_2| + \int_{t_1}^{t_2} \sumrs[M] |\Jr^n\rs|+|\Js^n\rs| \dd\tau,
    \end{split}
\end{equation}
 so we can apply Arzelà--Ascoli, i.e.
\[c^n_{1,0} \to c^{1,0} \text{ and } c^n_{0,1} \to c_{0,1} \text{ uniformly on compact intervals.}\]
Finally, since $\int_0^t \sumrs[M] |\Jr^n\rs| + |\Js^n\rs| \dd\tau \to 0$ as $M\to\infty$ uniformly in $n$, we can pass to the limit in the equations for the monomers.

\end{proof}
From here, we can quickly show that any solution conserves both Type I and Type II mass. Nevertheless, the argument is somewhat more involved than the extension of the existence proof. In the one-component system, when handling the boundary term $\int_0^t g_{N+1} J_N\dd \tau$ (for some test-sequence $g_r$), the trick is always to go to $g_{N+1} \sum\limits_{n=N+1}^{\infty} c_n(t) - c_n(0)$. In the two-component system, we have on the boundary (denoted $\Gamma$) of some ``nice'' finite set $\sum\limits_{r,s \in \Gamma} g\rps \Jr\rs + g\rsp \Js\rs$. So we have to find a $\Gamma$, such that $g\rps \equiv \const \equiv g\rsp$ on $\Gamma.$ This is not exactly possible for $g\rs = r$ or $g\rs = s$, but we can effectively test on $\{r\le R\}$ for some large $R$, which is then appropriate.
    \begin{theorem}\textit{(Mass conservation)}\label{massConservation}\\
        Let $c^0 \in X^+$ and $\ar\rs + \as\rs \in \mathcal{O}(r+s)$. Then, any solution to \eqref{mcBD} with initial data $c^0$ on $[0,T)$ satisfies
\begin{equation}
    \sumrs r c\rs (t) = \sumrs r c^0\rs \text{ and } \sumrs s c\rs(t) = \sumrs c^0\rs \text{ for any } t<T.
\end{equation}

    \end{theorem}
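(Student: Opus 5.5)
By the symmetry between the two species it suffices to prove conservation of the Type I mass $\rho$; the same argument with the roles of $r$ and $s$ exchanged handles $\sigma$. Throughout, fix $t<T$.

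\textbf{Step 1: a superlinear moment.} Apply Theorem \ref{delaVallee} to the single sequence $(r+s)$ with respect to the measure $c^0\rs$. This gives $\varphi$ with $\sumrs \varphi(r+s)c^0\rs<\infty$. Lemma \ref{mcBDApriori} with $\beta=0$ then yields $\sup_{[0,t]}\sumrs \varphi(r+s)c\rs<K$. Since $x/\varphi(x)\to0$, the tails $\sumrs[N][] (r+s)c\rs(\tau)$ tend to zero as $N\to\infty$, uniformly in $\tau\in[0,t]$.

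\textbf{Step 2: a truncated test sequence.} Use the mild formulation, Definition \ref{mcBDSolutions}(iii), with the finitely supported test sequence
\[
g\rs = \min(r,R)\,\indicator{r+s\le N}, \qquad R<N.
\]
This is legitimate: for finitely many $(r,s)$ all integrals are finite by (ii), so summation by parts as in \eqref{weakmcBDfinite} is exact. For $(r,s)$ away from the boundary, $\delr g\rps-g_{1,0}=\indicator{r<R}-1$ and $\dels g\rsp-g_{0,1}=0$. Hence the interior contribution is $-\int_0^t\sum_{r\ge R}\Jr\rs\,\dd\tau$, restricted to $r+s<N$. The remaining contributions are boundary terms on the diagonal $r+s=N$; there $g$ jumps from $\min(r,R)$ to $0$, so they have the form
\[
\int_0^t \sum_{r+s=N}\min(r+1,R)\,\Jr\rs+\min(r,R)\,\Js\rs\,\dd\tau .
\]

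\textbf{Step 3: letting $N\to\infty$ at fixed $R$.} The boundary terms are bounded by
\[
R\int_0^t\sum_{r+s=N}\big(\ar\rs+\as\rs+\br\rps+\bs\rsp\big)\big(c\rs+c\rps+c\rsp\big)\dd\tau .
\]
This is a single diagonal of an $L^1$-summable series by Definition \ref{mcBDSolutions}(ii). It therefore tends to $0$ along a subsequence of $N$ (or along all $N$, since the series converges). The left-hand side $\sumrs \min(r,R)c\rs$ passes to the limit by dominated convergence. We obtain
\[
\sumrs \min(r,R)\,c\rs(t)=\sumrs \min(r,R)\,c^0\rs-\int_0^t\sum_{r\ge R}\Jr\rs\,\dd\tau .
\]

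\textbf{Step 4 (main obstacle): the flux term as $R\to\infty$.} The remaining term $\int_0^t\sum_{r\ge R}\Jr\rs$ must vanish. Crude absolute bounds from (ii) show it is finite, and it tends to $0$ simply because it is the tail of a convergent integrable series: $\sum_{r}|\Jr\rs|$ is dominated by the integrable quantity in (ii). So dominated convergence in $R$ gives the limit $0$. Meanwhile $\sumrs \min(r,R)c\rs\to\rho$ by monotone convergence, with $\rho<\infty$ because $c\in X$. This concludes the argument. If the series in (ii) turned out not to control $\sum_r|\Jr\rs|$ in the needed form, I would instead rewrite $\int_0^t\sum_{r\ge R}\Jr\rs$ via the mild equations as a difference of tails $\sum_{r>R}c\rs(\tau)$ (analogous to the boundary trick in Lemma \ref{mcBDApriori}) and use Step 1 to make it small. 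This is where the superlinear moment $\varphi$ would enter.
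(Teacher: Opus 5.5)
Your proof is correct, but it takes a different route from the paper.

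The paper tests with $g_{r,s}=r$ on rectangles $\{r\le R,\ s\le S\}$. The edge $s=S$ has weight at most $R$ and vanishes as $S\to\infty$ by dominated convergence. The edge $r=R$ carries the growing weight $R+1$. The paper handles it by rewriting $\int_0^t\sum_s \Jr_{R,s}\,\dd\tau$ as the change of the tail $\sum_{r>R}\sum_s c_{r,s}$. It then controls $(R+1)$ times that tail through the superlinear moment of Lemma \ref{mcBDApriori}, which is where $\ar\rs+\as\rs\in\bigO(r+s)$ and the de la Vallée-Poussin function enter.

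You instead use the bounded, bounded-increment sequence $\min(r,R)$ cut off on triangles $r+s\le N$. All boundary weights are then at most $R+1$, so the diagonal terms vanish as $N\to\infty$ from the integrability of the fluxes alone. The resulting exact identity
\[
\sum \min(r,R)\,c_{r,s}(t)=\sum\min(r,R)\,c^0_{r,s}-\int_0^t\sum_{r\ge R}\Jr\rs\,\dd\tau
\]
yields conservation as $R\to\infty$, by (ii) and monotone convergence (using $c(t)\in X$). No tail identity and no a priori estimate are needed, so your Step 1 is never used. In fact your argument shows that every solution in the sense of Definition \ref{mcBDSolutions} conserves both masses, whatever the growth of $\ar\rs,\as\rs$, as in the one-component theory. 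The paper's route stays within the tail-plus-moment mechanism it uses throughout Section 2, but gains nothing for this particular statement.

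Both arguments read (ii) as $\int_0^t\sum(|\Jr\rs|+|\Js\rs|)\,\dd\tau<\infty$, i.e. with $\br\rps c\rps$ rather than the literal $\br\rps c\rs$. The paper's proof also uses dominated convergence on the flux sums, so this is the intended reading.

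There are two small bookkeeping slips, neither of which affects the conclusion:
\begin{enumerate}
\item Since $g_{1,0}=1$ multiplies the full flux sum in the monomer equation, there is also a tail term $-\int_0^t\sum_{r+s>N}\Jr\rs\,\dd\tau$. On the diagonal the coefficient of $\Jr\rs$ is $\min(r,R)+1$ rather than $\min(r+1,R)$. Both vanish as $N\to\infty$ for the same reason as your other boundary terms.
\item In Step 3 you should bound $|\Jr\rs|\le \ar\rs c_{1,0}c\rs+\br\rps c\rps$ directly. Your product bound contains cross terms such as $\bs\rsp c\rps$ that (ii) does not control.
\end{enumerate}
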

\begin{proof}
    Let $\vphi(m)$ again be a de la Vallée function such that $\sumrs \vphi(r+s) c^0\rs<\infty$. Furthermore let $c$ be a solution, $t<T$ and $2<R,S\in \N$ be arbitrary. Then, we may calculate
\begin{equation}\label{helperEquationMassConservation}
   \begin{split}
       \sumrs[2] r(c\rs(t)-c^0\rs) &\gets \sumrs[2][\substack{r=R \\s=S}] r (c\rs(t)-c^0\rs)\\
       \overset{\text{def}}&{=} \underbrace{\int_0^t\sumrs[2][\substack{r=R \\s=S}] \Jr\rs+  \sumrs[1][] (r+1)\Jr\rs + r \Js\rs \dd \tau}_{\to c_{1,0}(t)-c_{1,0}(0) \text{ by definition}}\\
       &\quad-\int_0^t\sum\limits_{s=0}^S (R+1) \Jr_{R,s} \dd \tau 
       - \int_0^t \sum\limits_{r=0}^R r \Js_{r,S} \dd \tau.
   \end{split} 
\end{equation}
So if we show that the boundary term vanishes as $S,R\to\infty$, we are done. To that end we calculate
\begin{equation*}
    \int_0^t \sum\limits_{s=0}^S (R+1)\Jr_{R,s}\dd \tau \overset{\text{dom. conv}}{\to}
    (R+1)\int_0^t \sum\limits_{s=0}^\infty \Jr_{R,s}\dd \tau 
= (R+1) \sum\limits_{r=R+1}^\infty \sum\limits_{s=0}^\infty c\rs(t)-c\rs(0) \to 0,
\end{equation*}
where the last convergence is due to
\[
  (R+1)\sum\limits_{r=R+1}^\infty \sum\limits_{s=0}^\infty c\rs(t)\le \underbrace{\frac{(R+1)}{\vphi(R+1)}}_{\to 0}\underbrace{\sum\limits_{r=R+1}^\infty \sum\limits_{s=0}^\infty \vphi(r+s)c\rs(t)}_{\le K \text{ by Lemma \ref{mcBDApriori}}}.
\]
So the $r=R$ term vanishes, if we take $R$ large and then $S$ large. Lastly for a fixed $R$ large, the $s=S$ term goes to zero by dominated convergence as $S\to\infty$.\\
The Type II conservation is proved analogously.

\end{proof}
A simple consequence of Theorem \ref{massConservation} is the continuity in $X$.
    \begin{proposition}\textit{(Continuity in $X$)}\label{solutionContinuity}\\
        Let $\ar\rs,\as\rs \in \mathcal{O} (r+s)$. Then, any solution to \eqref{mcBD} is already continuous as a function $c\colon [0,T)\to X^+$. Furthermore, the sum $\summ m \sumrs[m][] c\rs$ is uniformly convergent on compact intervals.

\end{proposition}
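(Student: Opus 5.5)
The plan is to combine the componentwise continuity of Definition \ref{mcBDSolutions}(i) with conservation of the total mass (Theorem \ref{massConservation}). Together these rule out any loss of mass to infinity along convergent time sequences. Fix a compact interval $[0,t_0]\subset[0,T)$.

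\textbf{Step 1: reduce to weak* convergence plus convergence of norms.} Let $t_n\to t$ in $[0,t_0]$. Definition \ref{mcBDSolutions}(i) gives $c\rs(t_n)\to c\rs(t)$ for every $(r,s)$ and $\sup_n\norm{c(t_n)}<\infty$. By Proposition \ref{weakTopology} this means $c(t_n)\weakstar c(t)$. By Theorem \ref{massConservation},
\[\norm{c(t_n)}=\rho(c^0)+\sigma(c^0)=\norm{c(t)}\]
for all $n$, because $c\ge0$. Now apply a Brezis--Lieb/Scheffé type argument in the weighted $\ell^1$ space. For each $(r,s)$, set $f^n\rs=(r+s)\min(c\rs(t_n),c\rs(t))$. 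We have $0\le f^n\rs\le (r+s)c\rs(t)$, which is summable, and $f^n\rs\to (r+s)c\rs(t)$. Dominated convergence therefore gives $\sum f^n\rs\to\norm{c(t)}$. Since $|x-y|=x+y-2\min(x,y)$,
\[\norm{c(t_n)-c(t)}=\norm{c(t_n)}+\norm{c(t)}-2\sum f^n\rs\to 0.\]
Hence $c$ is continuous into $X$ (and into $X^+$).

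\textbf{Step 2: uniform convergence of the tail.} The tail is $T_M(t):=\sum_{m\ge M} m\sum_{r+s=m}c\rs(t)$. By mass conservation,
\[T_M(t)=\rho(c^0)+\sigma(c^0)-\sum_{m<M} m\sum_{r+s=m}c\rs(t).\]
Each finite partial sum is continuous in $t$, so each $T_M$ is continuous. The sequence $T_M$ decreases monotonically to $0$ pointwise. Dini's theorem on the compact set $[0,t_0]$ then gives $T_M\to0$ uniformly, which is exactly the uniform convergence of $\summ m\sumrs[m][] c\rs$ on compact intervals.

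As an alternative for Step 2, the de la Vallée-Poussin function $\vphi$ with $\sum\vphi(r+s)c^0\rs<\infty$ together with Lemma \ref{mcBDApriori} (with $\beta=0$) gives
\[T_M(t)\le \sup_{x\ge M}\frac{x}{\vphi(x)}\,K,\]
which is uniform in $t$ and tends to $0$. Either route works. Step 1 could then also be derived from Step 2: uniform tail smallness plus continuity of finitely many components gives continuity in norm.

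\textbf{Main obstacle.} No step is really hard here; the only point needing care is that the norm identity uses positivity of solutions together with Theorem \ref{massConservation}. That theorem needs the hypothesis $\ar\rs,\as\rs\in\bigO(r+s)$, which is assumed, so everything is in place.
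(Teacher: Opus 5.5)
Your proof is correct and matches the paper's: mass conservation makes the tails $T_M$ continuous and monotonically decreasing to zero, so Dini's theorem gives uniform convergence on compact intervals. The only difference is that the paper reads off continuity in $X$ from this uniform tail control (your closing remark), while your Step 1 gets it directly from a Scheffé-type argument using componentwise convergence and equality of norms; both routes are valid.
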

\begin{proof}
  We can argue as in \cite{penrose:foundations}.
    The sequence $\summ[1][N]m\sumrs[m][]c\rs$ is a monotone sequence of continuous functions, which converges by Theorem \ref{massConservation} to a continuous function. Dini's theorem shows, that the convergence is therefore uniform on compact intervals. From this we can conclude the continuity in $X$.

\end{proof}%
Lastly, we want to prove uniqueness of solutions. Unfortunately, the nice uniqueness proof of Laurençot \cite{laurencot:uniqueness} does not seem to generalise so easily. One may try to use the tail distributions $F\rs \coloneqq \sum\limits_{\substack{k\ge r \\ l\ge s}} c\kl$, which do satisfy $\delr\dels F\rpsp = c\rs,$ but we then run into trouble, when summing up their time derivatives. Furthermore, it is not clear if the trick of multiplying $\odv{}{t} (F\rs - \tilde F\rs)$ with $\sgn(F\rs - \tilde F\rs)$ will work.
  Instead, we use a direct approach as in \cite{penrose:foundations} combined with de la Vallée to estimate $\odv{}{t} ||c - \tilde c||$ for two solutions $c$ and $\tilde c$, which yields uniqueness of solutions, whenever the initial conditions have finite second moment.
    \begin{theorem}\textit{(Uniqueness for bounded second moments)}\label{mcBDUniqueness}\\
        Let $c^0 \in X^+$, such that $\sumrs (r+s)^2 c^0\rs <\infty$ and $\ar\rs + \as\rs \in \mathcal{O}(r+s).$
Then, there exists exactly one solution of \eqref{mcBD} on any interval $[0,T)$.

    \end{theorem}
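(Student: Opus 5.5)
Existence on $[0,T)$ is Theorem \ref{mcBDExistenceDela}, so only uniqueness needs proof. Let $c,\tilde c$ be two solutions on $[0,T)$ with the same initial data $c^0$, and put $e\rs \coloneqq c\rs-\tilde c\rs$. The plan follows \cite{penrose:foundations}: estimate the weighted $\ell^1$ distance
\[
\Phi(t)\coloneqq \sumrs (r+s)\,|e\rs(t)|
\]
and close with Grönwall. The second-moment assumption supplies the a priori bound. Apply Theorem \ref{delaVallee} to the weights $(r+s)\,c^0\rs$ (these have finite total mass because $\sumrs (r+s)^2 c^0\rs<\infty$) to obtain $\varphi$ with $\sumrs \varphi(r+s)(r+s)c^0\rs<\infty$. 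Lemma \ref{mcBDApriori} with $\beta=1$ then gives $\sup_{[0,T')}\sumrs \varphi(r+s)(r+s)(c\rs+\tilde c\rs)\le K$ for every $T'<T$. Because $\varphi(x)/x\to\infty$, this makes the weights $(r+s)^2$ uniformly integrable in time against $c$ and $\tilde c$.

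Step one is a truncated identity. Fix $N$. Using Definition \ref{mcBDSolutions}(iii), compute the time derivative (in the integrated, a.e. sense) of
\[
\Phi_N(t)\coloneqq |e_{1,0}|+|e_{0,1}|+\sumrs[2][N](r+s)|e\rs|,
\]
with weight $g\rs=(r+s)\sgn(e\rs)$, and apply the summation by parts \eqref{weakmcBDfinite}. Write the flux differences as
\[
\Jr\rs(c)-\Jr\rs(\tilde c)=\ar\rs\big(c_{1,0}e\rs+\tilde c\rs e_{1,0}\big)-\br\rps e\rps ,
\]
and similarly for $\Js$. The terms with $e\rs$ and $e\rps$ are then sorted by sign. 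The fragmentation contributions $\br\rps e\rps\,[(r+s+1)\sgn e\rps-(r+s)\sgn e\rs-\ldots]$ have the favourable sign, exactly as in the one-component computation, up to a term bounded by $|\br\rps e\rps|$ with weight one. That term has to be absorbed, either by choosing the weight $(r+s)$ rather than $1$ or by absorbing it into the good part. The coagulation terms with $c_{1,0}e\rs$ are bounded by $K(r+s)|e\rs|$, since $\ar\rs+\as\rs\in\bigO(r+s)$ and the monomer densities are bounded. This gives a contribution $K\Phi$.

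The main obstacle is the cross term $\tilde c\rs\,e_{1,0}$ (and $\tilde c\rs\,e_{0,1}$). In the weighted estimate it appears as
\[
|e_{1,0}|\sumrs (r+s)\,\ar\rs\,\tilde c\rs \le K|e_{1,0}|\sumrs (r+s)^2\tilde c\rs ,
\]
and this is exactly where the finite second moment propagated by Lemma \ref{mcBDApriori} is needed. This is the same place where the one-component argument of \cite{laurencot:uniqueness} avoids moment assumptions by working with the tails $F\rs$, a route that is unavailable here. The remaining boundary terms at $r+s=N$ are of the form $(N+1)\sum_{r+s=N}(|\Jr\rs(c)|+|\Jr\rs(\tilde c)|+\ldots)$. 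Since the solutions have transitions in $L^1_\loc$ weighted by $r+s$, which follows from the $\beta=1$ bound, these terms tend to zero in $L^1(0,t)$ along a subsequence $N\to\infty$. This is handled as in Theorem \ref{massConservation}, controlling the tail by $\frac{N+1}{\varphi(N+1)}\sumrs[N]\varphi(r+s)(r+s)c\rs$.

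Letting $N\to\infty$ gives
\[
\Phi(t)\le K\int_0^t\Phi(\tau)\dd\tau ,
\]
with $K$ depending on $T'$ and on the second-moment bound. Since $\Phi(0)=0$, Grönwall yields $\Phi\equiv0$ on $[0,T')$ for every $T'<T$, hence $c=\tilde c$. The delicate step is the sign bookkeeping for the $\br$-terms in two dimensions, because both $\Jr$ and $\Js$ feed into each $c\rs$. I would first check that each flux difference, paired with its own neighbouring pair of signs, yields a nonpositive contribution plus controlled remainders, separately for the $r$-direction and the $s$-direction.
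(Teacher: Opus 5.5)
\textbf{Verdict: there is a genuine gap, in the upper boundary term at $r+s=N$. It is repairable.}

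\textbf{What works.} Your interior estimates are sound. Putting $|e_{1,0}|+|e_{0,1}|$ into $\Phi_N$ is a legitimate alternative to the paper's bound $|x_{1,0}|\le\sum_{r+s\ge2}(r+s)|x\rs|$ via Theorem \ref{massConservation}. The fragmentation bookkeeping you flag as delicate is in fact exact. With your functional, the coefficient multiplying $\Ir\rs$ is $(r+s+1)\sgn\rps-(r+s)\sgn\rs-\sgn_{1,0}$. So the $-\br\rps e\rps$ part contributes at most $-\br\rps|e\rps|\,(r+s+1-(r+s)-1)=0$, with no remainder. The same holds in the $s$-direction.

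\textbf{Where it fails.} The boundary term is where the paper's proof does its real work. You claim two things about it, and neither holds as stated.
\begin{itemize}
\item You say transitions weighted by $r+s$ are in $L^1_\loc$ ``by the $\beta=1$ bound''. But Lemma \ref{mcBDApriori} controls only moments of $c$, and the theorem places no growth condition on $\br\rps,\bs\rsp$. Hence $\frac{N+1}{\varphi(N+1)}\sum_{r+s\ge N}\varphi(r+s)(r+s)c\rs$ controls the coagulation part $(N+1)\sum_{r+s=N}\ar\rs c_{1,0}c\rs$. It says nothing about $(N+1)\sum_{r+s=N}(\br\rps c\rps+\bs\rsp c\rsp)$.
\item You say the term is ``handled as in Theorem \ref{massConservation}''. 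That proof telescopes the \emph{signed} flux, $\int_0^t\sum_s\Jr_{R,s}\dd\tau=\sum_{r>R}\sum_s(c\rs(t)-c\rs(0))$. This is exactly what is lost once absolute values sit inside the sum, which is the point of the Remark following the theorem.
\end{itemize}

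\textbf{How the paper closes it.} It tests \eqref{mcBDWeak} with $g\rs=(r+s)\indicator{r+s\ge N}$, which is admissible by Theorem \ref{massConservation}. There the fragmentation tail appears with a favourable sign:
\[
N\Big[\sum_{r+s\ge N}(r+s)c\rs(\tau)+\int_0^\tau\sum_{r+s\ge N}(\br\rps c\rps+\bs\rsp c\rsp)\dd\theta\Big]\le K\Big(\sum_{r+s\ge N}(r+s)^2c^0\rs+\int_0^\tau N\sum_{r+s\ge N}(r+s)c\rs\dd\theta+N^2\sum_{r+s\ge N}c\rs(\tau)\Big).
\]
The right-hand side vanishes by the finite second moment and the de la Vall\'ee bound. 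The absolute boundary flux is then recovered as in \eqref{mcBDExistenceProof:eq1}.

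\textbf{Two repairs closer to your write-up.}
\begin{itemize}
\item Keep the fragmentation term in the proof of Lemma \ref{mcBDApriori} with $\beta=1$, and use $\varphi(n+1)(n+1)-\varphi(n)n\ge\varphi(n+1)$. This gives $\int_0^t\sum_{r+s\ge2}\varphi(r+s+1)(\br\rps c\rps+\bs\rsp c\rsp)\dd\tau\le K$, which is the weighted bound you asserted.
\item Since you only need a subsequence, Definition \ref{mcBDSolutions}(ii) alone suffices. It gives $\sum_N a_N<\infty$ for
\[
a_N\coloneqq\int_0^{T'}\sum_{r+s=N}\big(|\Jr\rs(c)|+|\Js\rs(c)|+|\Jr\rs(\tilde c)|+|\Js\rs(\tilde c)|\big)\dd\tau ,
\]
hence $\liminf_N Na_N=0$. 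Because $a_N$ is monotone in the upper time limit, one subsequence serves all $t\le T'$. The monomer tails $\int_0^{T'}\sum_{r+s\ge N}(|\Ir\rs|+|\Is\rs|)\dd\tau$ also vanish by (ii).
\end{itemize}
With either fix, the second moment is needed only for your cross term.
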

\begin{proof}
    In this proof, $K$ denotes a constant independent of $c,\tilde c$ and $t$ that may change from line to line without renaming.\\
Let $c\rs$ and $\tilde{c}\rs$ be two solutions, which have uniformly bounded second moment by Lemma \ref{mcBDApriori}. We denote with $x\rs = c\rs - \tilde{c}\rs$, $\sgn\rs = \sgn(x\rs)$, $\Ir\rs = \Jr\rs(c) - \Jr\rs(\tilde{c})$ and $\Is\rs = \Js\rs(c) - \Js\rs(\tilde{c})$. With this we obtain for any $N$ and $t<T$
\begin{equation}\label{helperEquationUniquness}
    \begin{split}
        \sumrs[2][N] (r+s)|x\rs|(t)
        &=\int_0^t -\sumrs[2][N] \sgn\rs (r+s) (\delr \Ir\rs - \dels \Is\rs)\dd \tau\\
        \overset{\text{summation by parts}}&{=}\int_0^t \sumrs[2][N] \delr(\sgn\rps(r+s+1))\Ir\rs+\dels(\sgn\rsp(r+s+1))\Is\rs \dd \tau\\
                               &\qquad+\int_0^t \sumrs[1][]2\sgn\rps \Ir\rs + 2 \sgn\rsp \Is\rs \dd \tau\\
                               &\qquad- \int_0^t\sumrs[N][]\sgn\rps (r+s+1)\Ir\rs + \sgn\rsp(r+s+1)\Is\rs \dd \tau\\
                               &\eqqcolon \int_0^t A + B + C \dd \tau.
    \end{split}
\end{equation}
Now we note that we can write 
\[\Ir\rs = \ar\rs c\rs x_{1,0} + \ar\rs \tilde{c}_{1,0}x\rs - \br\rps x\rps,\]
to find for the first term of $A$
\begin{equation*}
    \begin{split}
        \delr(\sgn\rps (r+s+1))\Ir\rs &= 
        \overbrace{\delr(\sgn\rps (r+s+1))}^{|\cdot|\le 2r+2s+1}\ar\rs c\rs x_{1,0}\\
                                      &\qquad+ \sgn\rps(r+s+1)\ar\rs \tilde{c}_{1,0} x\rs - (r+s)\ar\rs \tilde{c}_{1,0} |x\rs|\\
        &\qquad+ \br\rps x\rps \sgn\rs(r+s)  - \br\rps |x\rps|(r+s+1)\\
        &\le K (r+s) \ar\rs c\rs |x_{1,0}| + \ar\rs \tilde{c}_{1,0}|x\rs|.
    \end{split}
\end{equation*}
We can treat the term with $\dels$ similarly and sum over $r+s=2$ to $N$ to obtain
\begin{equation}
    A \le K \sumrs[2][N] (r+s)^2 c\rs |x_{1,0}| + \tilde c_{1,0} (r+s)|x\rs| + (r+s)^2 c\rs|x_{0,1}| + \tilde c_{0,1} (r+s)|x\rs|.
\end{equation}
Next, we treat the boundary term at $r+s=1$, i.e. $B$. For this we simply estimate
\begin{equation*}
    \sumrs[1][] \sgn\rps \Ir\rs + \sgn\rsp \Is\rs
    \le \sumrs[1][] \ar\rs c\rs |x_{1,0}| + \ar\rs \tilde c_{1,0} |x\rs|+ \as\rs c\rs |x_{0,1}| + \as\rs \tilde c_{0,1} |x\rs|.
\end{equation*}
Since $|x\rs|$ is just $|x_{1,0}|$ or $|x_{0,1}|$ in the sum over $r+s=1$, we can combine this estimate together with the one on $A$ and the bound on $\sumrs (r+s)^2 c\rs$ to obtain
\begin{equation}
    A+B \le K\left(|x_{1,0}| + |x_{0,1}| + \sumrs[2][N] (r+s)|x\rs|\right).
\end{equation}
But now we can treat the difference in the monomers via the conservation of mass
\[|x_{1,0}| = |c_{1,0}-\tilde{c}_{1,0}| = | \sumrs[2] r c\rs - \sumrs[2]r \tilde{c}\rs| \le \sumrs[2](r+s)|x\rs|.\]
Plugging this estimate into \eqref{helperEquationUniquness} yields
\begin{equation}
    \sumrs[2][N] (r+s)|x\rs|(t) \le K  \int_0^t \sumrs[2](r+s)|x\rs| \dd \tau 
    + \int_0^t C_N(\tau)\dd \tau.
\end{equation}
Therefore, it suffices to show, that the term involving $C_N$ goes to zero as $N\to\infty$, which would allow us to conclude via Grönwall. To do this we follow the same ideas as in the existence proof. We once more take a de la Vallée function $\vphi$ depending on $c^0$, such that $\sumrs[1][\infty][m] \vphi(m) \sumrs[m][] (r+s)c^0\rs<\infty$, which is possible due to the second moment bound. From here we proceed as follows
\begin{equation}\label{macBDUniquness:eq1}
    \begin{split}
        \int_0^t& \sumrs[N][]\sgn\rps (r+s+1)\Ir\rs + \sgn\rsp(r+s+1)\Is\rs \dd\tau\\
        &\le \int_0^t (N+1)\sumrs[N]|\Jr\rs(c)|+|\Js\rs(c)| + |\Jr\rs(\tilde{c})| + |\Js\rs(\tilde{c})| \dd \tau.
    \end{split}
  \end{equation}
We will show, that $N \Big(\sumrs[N](r+s)c\rs(\tau) + \int_0^\tau \sumrs[N] \br\rps c\rps + \bs\rsp c\rsp \dd \sigma\Big)$ goes to zero as $N\to\infty$. Then, recovering the integrated version is done as in \eqref{mcBDExistenceProof:eq1} of the existence proof. Theorem \ref{massConservation} implies that $g\rs = \indicator{r+s\ge N} (r+s) = (r+s) - \indicator{r+s< N } (r+s)$ is a valid test sequence for \eqref{mcBDWeak} yielding for any $0<\tau <T$
\begin{equation*}
    \begin{split}
        N&\left[\sumrs[N](r+s)c\rs(\tau) + \int_0^\tau \sumrs[N]\br\rps c\rps + \bs\rsp c\rsp \dd \sigma\right]\\
         &= N\left[\sumrs[N] (r+s)c^0\rs + \int_0^\tau \sumrs[N] (\ar\rs c_{1,0}+\as\rs c_{0,1})c\rs \dd \sigma + \int_0^\tau N \sumrs[N-1][]\Jr\rs + \Js\rs\dd\sigma\right]\\
        &\le K\left( \sumrs[N](r+s)^2 c^0\rs + \int_0^\tau N\sumrs[N] (r+s)c\rs \dd \sigma + N^2 \sumrs[N] c\rs(\tau)-c\rs^0\right).
    \end{split}
\end{equation*}
But now we notice that $N^2 \sumrs[N]c\rs(\tau)\le \frac{N}{\vphi(N)}\sumrs[N]\vphi(r+s)(r+s)c\rs \overset{\text{Lemma \ref{mcBDApriori}}}\le K \frac{N}{\vphi(N)} \to 0$. So by Grönwall, we obtain the desired convergence to zero.

\end{proof}
\begin{remark}
  There is a very subtle difference to the one-component case. It is the boundary term $C_N$, where we put the absolute values inside the sum $N\sumrs[N][]|\Jr\rs| + |\Js\rs|.$ Proving that $N\big|\sumrs[N][] \Jr\rs + \Js\rs\big|$ goes to zero is much simpler, but since we do not have any control over the $\sgn\rps$ terms, this is not enough. Hence, we only find uniqueness for bounded second moments, whereas in the one-component case, if $a_r \in \bigO(r^\alpha)$, it suffices to have $\sum\limits_{n=1}^{\infty} n^{1+\alpha} c^0_n<\infty.$ This is---in spirit---the same problem we encounter, when trying to prove the weak formulation for general two-dimensional test sequences $g\rs$.
\end{remark}
\subsection{Entropy Equation}\label{sectionEntropyEquation}
Before we come to the entropy equation, one has to know that solutions are positive for all positive times.
    \begin{proposition}\textit{(Positivity)}\label{solutionPositive}\\
        Assume that $\ar\rs,\as\rs,\br\rps,\bs\rsp >0$ for all $r+s\ge 1$. Then, any solution $c$ to \eqref{mcBD} that satisfies 
\[\sumrs r c\rs(0) \neq 0 \neq \sumrs s c\rs(0),\]
already satisfies $c\rs(t) > 0$ for all $t>0$ and $r+s\ge 1.$

\end{proposition}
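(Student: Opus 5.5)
The approach is the standard one for Becker--Döring type systems. We split each equation into a nonnegative gain part and a loss part that is linear in the unknown, so that a variation-of-constants formula shows a density, once positive, stays positive. We then show positivity propagates from the monomers to all clusters through the coagulation terms.

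\textbf{Step 1: rewrite as a linear ODE with a source.} For $r+s\ge2$ the mild formulation gives, for a.e.\ $t$,
\[
\odv{}{t}c\rs = G\rs(t) - L\rs(t)\,c\rs ,
\]
with gain
\[
G\rs = \ar\rms\monr c\rms+\as\rsm\mons c\rsm+\br\rps c\rps+\bs\rsp c\rsp \ge 0
\]
and loss coefficient
\[
L\rs=\ar\rs\monr+\as\rs\mons+\br\rs+\bs\rs .
\]
Both are locally integrable in time, since $c_{1,0},c_{0,1}$ are bounded and the coefficients are fixed for fixed $(r,s)$. Since $c\rs$ is absolutely continuous, this gives
\[
c\rs(t)=e^{-\int_{t_0}^t L\rs}\,c\rs(t_0)+\int_{t_0}^t e^{-\int_\tau^t L\rs}G\rs(\tau)\dd\tau .
\]
Hence once $c\rs(t_0)>0$, it stays positive for $t\ge t_0$. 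Also $c\rs(t)>0$ for all $t>t_0$ as soon as $G\rs>0$ on a set of positive measure in every interval $(t_0,t)$, in particular whenever $G\rs>0$ on $(t_0,\infty)$.

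For the monomers we do the same, with loss $L_{1,0}=2a_{1,0}c_{1,0}+\as_{1,0}c_{0,1}+\sum_{r+s\ge1}\ar\rs c\rs$. This is locally integrable by Definition \ref{mcBDSolutions}(ii), using $\ar\rs\in\bigO(r+s)$, and the loss from $\Jr_{1,0}$ enters twice. The gain consists of the fragmentation terms $\br\rps c\rps$, $\bs_{1,1}c_{1,1}$ and so on, all nonnegative. Absolute continuity of $c_{1,0}$ follows from the mild formulation. So the monomers also satisfy ``positive once, positive forever'', and the analogous statement holds for $c_{0,1}$.

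\textbf{Step 2: monomers are positive for all $t>0$.} This is the main obstacle, because it uses the mass condition. Suppose $c_{1,0}(t_1)=0$ for some $t_1>0$. By Step 1 applied to $c_{1,0}$, $c_{1,0}\equiv0$ on $[0,t_1]$, and the gain must then vanish a.e.\ there. So $\br\rps c\rps=0$ for all clusters with $r\ge1$, i.e.\ $c\rs\equiv0$ on $[0,t_1]$ for every $r\ge1$ with $(r,s)\ne(1,0)$. This uses $\br\rps>0$ and continuity. Together with $c_{1,0}\equiv0$, this gives $\sum r c\rs(0)=0$, a contradiction. Hence $c_{1,0}(t)>0$ for all $t>0$, and symmetrically $c_{0,1}(t)>0$ using $\sum s c\rs(0)\ne0$. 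The point to check carefully is that the monomer gain really contains a term $\br_{r+1,s}c_{r+1,s}$ for every cluster with $r\ge1$. This follows from the $-\sum_{r+s\ge1}\Jr\rs$ term, whose fragmentation part is $\sum \br\rps c\rps$ over all $(r+1,s)$.

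\textbf{Step 3: induction on $r+s$.} Assume $c\kl(t)>0$ for all $t>0$ whenever $k+l<n$, and take $r+s=n\ge2$. If $r\ge1$, then $G\rs\ge\ar\rms\monr c\rms>0$ on $(0,\infty)$, since $\ar\rms>0$ and both factors are positive by the induction hypothesis. If $r=0$, then $s\ge2$ and we use $\as\rsm\mons c\rsm$ instead. Step 1 then gives $c\rs(t)>0$ for all $t>0$, which completes the induction and the proof.
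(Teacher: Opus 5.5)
Your argument is correct and is the standard Ball--Carr--Penrose scheme: an integrating factor gives ``positive once, positive forever'', monomer positivity follows by contradiction with the nonzero Type I/II masses, and then induction on $r+s$ finishes the proof; the paper does not write this out but invokes exactly this argument by citing Dunwell and Penrose. Two points are only cosmetic: your $L_{1,0}$ counts $\ar_{1,0}c_{1,0}$ three times, because the sum over $r+s\ge1$ already contains it, so the correct coefficient is $\ar_{1,0}c_{1,0}+\as_{1,0}c_{0,1}+\sum_{r+s\ge1}\ar\rs c\rs$; and its local integrability follows from Definition \ref{mcBDSolutions}(ii) alone, without the growth bound $\ar\rs\in\bigO(r+s)$, which this proposition does not assume.
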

\begin{proof}
  This was also shown in \cite{dunwell:phd} following the arguments of \cite{penrose:foundations}.
\end{proof}
From here on out, we always assume detailed balance (Definition \ref{defiDetailedBalance}), i.e. $0<\ar\rs,\as\rs,Q\rs$ and $\br\rps = \ar\rs \frac{Q\rs}{Q\rps}$ and $\bs\rsp = \as\rs \frac{Q\rs}{Q\rsp}.$ The following Proposition tells us, that we can view these $Q\rs$ as given coefficients (replacing the $\br\rps,\bs\rsp$).

    \begin{proposition}\textit{(Detailed balance)}\label{detailedBalance}\\
        Let $\ar\rs,\as\rs,\br\rps,\bs\rsp >0$. Then, the following are equivalent
\begin{enumerate}
  \item $\ar\rs,\as\rs,\br\rps,\bs\rsp$ satisfy detailed balance.
  \item $\frac{\as_{1,0}}{\bs_{1,1}}=\frac{\ar_{0,1}}{\br_{1,1}}$ and $\frac{\ar\rs \as\rps}{\br\rps \bs\rpsp} = \frac{\as\rs \ar\rsp}{\bs\rsp \br\rpsp}$ for all $r+s\ge1$.
  \item There exist $Q\rs>0$ with $Q_{1,0} = 1 = Q_{0,1}$, such that 
    \[ \br\rps = \ar\rs \frac{Q\rs}{Q\rps} \quad\text{ and }\quad \bs\rsp = \as\rs \frac{Q\rs}{Q\rsp}.\]
\end{enumerate}

\end{proposition}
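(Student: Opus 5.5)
I will prove the cycle i) $\Rightarrow$ ii) $\Rightarrow$ iii) $\Rightarrow$ i). Throughout, it helps to think of $\Omega$ as the vertex set of a directed lattice graph. There is an edge $(r,s)\to(r+1,s)$ with weight $\ar\rs/\br\rps$ and an edge $(r,s)\to(r,s+1)$ with weight $\as\rs/\bs\rsp$. Detailed balance then says that the \enquote{potential} $Q\rs$ can be defined consistently along all paths.

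\textbf{i) $\Rightarrow$ iii).} Let $\cbar$ be an equilibrium and set $z\coloneqq\cbar_{1,0}>0$, $w\coloneqq\cbar_{0,1}>0$. Define
\[
  Q\rs\coloneqq \frac{\cbar\rs}{z^r w^s}.
\]
Then $Q_{1,0}=Q_{0,1}=1$. The relation $\Jr\rs(\cbar)=0$ reads $\ar\rs z\,\cbar\rs=\br\rps\cbar\rps$. Dividing by $z^{r+1}w^s$ gives $\ar\rs Q\rs=\br\rps Q\rps$, which is the first identity in iii). The $\Js$ relation gives the second identity in the same way.

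\textbf{iii) $\Rightarrow$ i).} Conversely, given such $Q$, set $\cbar\rs\coloneqq Q\rs$, i.e.\ take $z=w=1$. Then
\[
  \Jr\rs(\cbar)=\ar\rs\cdot 1\cdot Q\rs-\br\rps Q\rps=0
\]
by iii), and likewise $\Js\rs(\cbar)=0$. Hence $\cbar$ is an equilibrium.

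\textbf{iii) $\Rightarrow$ ii).} Substituting the formulas from iii) into both sides of each identity in ii) makes all the $Q$'s telescope. The left-hand side becomes
\[
  \frac{\ar\rs\as\rps}{\br\rps\bs\rpsp}=\frac{Q\rpsp}{Q\rs},
\]
and the right-hand side gives the same quotient. The base identity $\frac{\as_{1,0}}{\bs_{1,1}}=\frac{\ar_{0,1}}{\br_{1,1}}$ becomes $Q_{1,1}/Q_{1,0}=Q_{1,1}/Q_{0,1}$, which holds because $Q_{1,0}=Q_{0,1}$.

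\textbf{ii) $\Rightarrow$ iii).} This is the main step. I would define $Q$ by path products: for any monotone lattice path from $(1,0)$ or $(0,1)$ to $(r,s)$, multiply the edge weights $\ar/\br$ and $\as/\bs$ along it, starting from the value $1$. I then show that the result is independent of the path, which I do by induction on $n=r+s$.

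For $n=1$ the values $Q_{1,0}=Q_{0,1}=1$ are fixed. At $n=2$, the point $(1,1)$ can be reached from both $(1,0)$ and $(0,1)$; the two resulting definitions agree precisely because of the first condition in ii).

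For general $(r,s)$, any two monotone paths ending there differ by a sequence of elementary swaps. Each swap replaces a corner $(k,l)\to(k+1,l)\to(k+1,l+1)$ by $(k,l)\to(k,l+1)\to(k+1,l+1)$, possibly together with a change of starting monomer near the origin. The elementary swap leaves the product unchanged exactly by the second condition in ii) applied at $(k,l)$. A change of starting monomer is handled by the $(1,1)$ base case.

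There is one wrinkle to check: paths may start at either monomer, and points on the axes $(r,0)$ and $(0,s)$ have only one predecessor. So I must make sure that every point with $r,s\ge1$ has both predecessors in $\Omega$ and that the two ways of reaching it agree. Rather than arguing about paths, it is cleaner to define $Q$ recursively:
\[
  Q\rps\coloneqq Q\rs\,\frac{\ar\rs}{\br\rps},\qquad Q\rsp\coloneqq Q\rs\,\frac{\as\rs}{\bs\rsp}.
\]
The inductive step at $(k+1,l+1)$ is then to show that the two candidate values, coming from $(k,l+1)$ and from $(k+1,l)$, coincide. Both candidates reduce to $Q\kl$ times the corresponding two-step product, and these two products agree by the second condition in ii), or by the first when $(k,l)$ would be $(0,0)\notin\Omega$. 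Positivity of $Q$ is immediate because all coefficients are positive, and the defining relations of iii) hold by construction.
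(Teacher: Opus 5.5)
Your proof is correct, but it is organised differently from the paper's. The paper goes once around the cycle. It derives the compatibility conditions ii) directly from an equilibrium by computing $\cbar_{1,1}$ and $\cbar\rpsp$ along the two possible two-step paths. It then defines $Q$ by the recursion, asserting it is ``exactly well defined by ii)'', and closes with the equilibria $\cbar_{1,0}^r\cbar_{0,1}^sQ\rs$.

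You instead use iii) as a hub, proving i)$\Leftrightarrow$iii) and ii)$\Leftrightarrow$iii).
\begin{itemize}
  \item Your i)$\Rightarrow$iii) normalises an equilibrium via $Q\rs=\cbar\rs/(\cbar_{1,0}^r\cbar_{0,1}^s)$. This is shorter than the paper's i)$\Rightarrow$ii). Since the normalised $Q$ is uniquely determined by the recursion, it also shows as a by-product that every equilibrium has the form $z^rw^sQ\rs$, the two-parameter family the paper later works with.
  \item With iii) as the hub, iii)$\Rightarrow$ii) becomes pure telescoping.
  \item Your ii)$\Rightarrow$iii) is the paper's argument, but with the induction on $r+s$ actually carried out, which the paper leaves implicit: axis points have a single predecessor, the base point $(1,1)$ uses the first condition, and interior points use the second condition at $(k,l)$.
\end{itemize}
The paper's version needs one implication fewer and shows directly how the cycle conditions arise from $\Jr\equiv0\equiv\Js$. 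Yours trades that for four one-line computations.

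A few presentational points:
\begin{itemize}
  \item Your opening sentence announces the cycle i)$\Rightarrow$ii)$\Rightarrow$iii)$\Rightarrow$i), which is not what you then prove. What you prove suffices, but say so.
  \item The path-swap paragraph, with its vague ``change of starting monomer'' step, is superseded by the recursive argument and can be dropped.
  \item In the recursion, state explicitly that the induction hypothesis includes the relations of iii) on all edges inside $\{r+s\le n\}$. That is what lets both candidates at $(k+1,l+1)$ reduce to $Q\kl$ times a two-step product.
\end{itemize}
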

\begin{proof}
    i) $\implies$ ii): Let $\cbar\rs$ be an equilibrium, then we have
\begin{equation}
  \begin{split}
    \as_{1,0} \cbar_{1,0} \cbar_{0,1} - \bs_{1,1}\cbar_{1,1}
    = \Js_{1,0} = 0 = \Jr_{0,1}
    = \ar_{0,1} \cbar_{1,0}\cbar_{0,1} - \br_{1,1}\cbar_{1,1}
    \implies \frac{\as_{1,0}}{\bs_{1,1}} = \frac{\ar_{0,1}}{\br_{1,1}}.
  \end{split}
\end{equation}
Furthermore, for any $r+s\ge 1$, we obtain
\begin{equation}
    \cbar\rpsp \overset{\Js\rps = 0} = \cbar\rps \cbar_{0,1} \frac{\as\rps}{\bs\rpsp} 
    \overset{\Jr\rs = 0} = \cbar\rs \cbar_{1,0}\cbar_{0,1} \frac{\as\rps \ar\rs}{\bs\rpsp \br\rps}
\end{equation}
and 
\begin{equation}
    \cbar\rpsp \overset{\Jr\rsp = 0} = \cbar\rsp \cbar_{1,0} \frac{\ar\rsp}{\br\rpsp} 
    \overset{\Js\rs = 0} = \cbar\rs \cbar_{1,0}\cbar_{0,1} \frac{\ar\rsp \as\rs}{\br\rpsp \bs\rsp}.
\end{equation}
ii) $\implies$ iii): We can set $Q_{1,0} = 1 =Q_{0,1}$ and then iteratively define $Q\rps = Q\rs \frac{\ar\rs}{\br\rps}$ and $Q\rsp = Q\rs \frac{\as\rs}{\bs\rsp}$, which is exactly well defined by ii).\\
iii) $\implies$ i): Fix $\cbar_{1,0},\cbar_{0,1} >0$ and let $\cbar\rs = \cbar_{1,0}^r \cbar_{0,1}^s Q\rs$, which satisfies $\Jr\rs(\cbar) \equiv 0 \equiv \Js\rs(\cbar).$

\end{proof}
Next, we can introduce the entropy.
    \begin{notation}\textit{}\label{entropies}\\
        We will denote
\begin{equation*}
    \begin{split}
        V(c) &\coloneqq \sumrs c\rs \left[\ln\left(\frac{c\rs}{Q\rs}\right)-1\right],\\
        \Vn(c) &\coloneqq \sumrs[1][N] c\rs \left[\ln\left(\frac{c\rs}{Q\rs}\right)-1\right],\\
        \D(c) &\coloneqq - \sumrs \Jr\rs \left(\delr \ln\frac{c\rps}{Q\rps} -\ln{\monr}\right) + \Js\rs \left(\dels \ln\frac{c\rsp}{Q\rsp} -\ln{\mons}\right)\text{ and}\\
        \Dn(c) &\coloneqq - \sumrs[1][N-1] \Jr\rs \left(\delr \ln\frac{c\rps}{Q\rps} -\ln{\monr}\right) + \Js\rs \left(\dels \ln\frac{c\rsp}{Q\rsp} -\ln{\mons}\right).
    \end{split}
\end{equation*}

    \end{notation}
The continuity properties of $V$ heavily depend on $Q\rs$. As mentioned in the introduction, we will assume \eqref{assCoeff}, which we restate here for the readers convenience
\begin{equation}\tag*{\eqref{assCoeff}}
   \begin{split}
       \lim\limits_{N\to\infty}\inf\limits_{r+s\ge N} Q\rs^\frac 1 {r+s} >0 \quad\text{ and }\quad
       \lim\limits_{N\to\infty}\sup\limits_{r+s\ge N} Q\rs^\frac 1 {r+s} <\infty.
   \end{split}
\end{equation}
This yields the following continuity properties.
    \begin{lemma}\textit{(Strong continuity of $\Vzw$)}\label{entropyStrongCont}\\
        Let $\rho,\sigma>0$, then we have the following continuity properties.
\begin{itemize}
    \item The function $U\colon \Brohsigp \to \R$, $c\mapsto \sumrs c\rs\ln c\rs$ is continuous w.r.t. the weak* topology.
    \item Assume \eqref{assCoeff}, then the function $\Vzw \colon X^+ \to \R$ is continuous w.r.t. the strong topology.
\end{itemize}

    \end{lemma}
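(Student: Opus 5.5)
Both parts rest on the same mechanism as in the one-component case \cite{penrose:foundations}: on a bounded mass set, the tail of $\sum c\rs |\ln c\rs|$ is controlled uniformly by $\sum (r+s)c\rs$, while finitely many terms are continuous coordinatewise. By Proposition \ref{weakMetric}, weak* convergence on $\Brohsigp$ is coordinatewise convergence with uniformly bounded norms.

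\textbf{First bullet.} Let $c^n\weakstar c$ in $\Brohsigp$. Split $U(c^n)=\sum_{r+s< N}c^n\rs\ln c^n\rs+\sum_{r+s\ge N}c^n\rs\ln c^n\rs$. The finite part converges, since $x\mapsto x\ln x$ is continuous on $[0,\infty)$. For the tail we need a bound uniform in $n$. The positive part is harmless: $c\rs\le \rho+\sigma$, so $c\rs\ln c\rs\le \ln(\rho+\sigma+1)\,c\rs\le K c\rs/(r+s)\cdot(r+s)$, and its tail is at most $K N^{-1}(\rho+\sigma)$.

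For the negative part, split each term according to whether $c\rs\ge e^{-(r+s)}$ or $c\rs< e^{-(r+s)}$:
\begin{itemize}
\item If $c\rs\ge e^{-(r+s)}$, then $|c\rs\ln c\rs|\le (r+s)c\rs$. This is not yet small, so use instead the threshold $e^{-\sqrt{r+s}}$: on $\{c\rs\ge e^{-\sqrt{r+s}}\}$ we get $|c\rs\ln c\rs|\le \sqrt{r+s}\,c\rs\le (r+s)c\rs/\sqrt N$.
\item If $c\rs<e^{-\sqrt{r+s}}$, monotonicity of $x|\ln x|$ near $0$ gives $|c\rs\ln c\rs|\le \sqrt{r+s}\,e^{-\sqrt{r+s}}$, which is summable over $\Omega$ because there are only $m+1$ indices with $r+s=m$.
\end{itemize}
So the tail is at most $(\rho+\sigma)/\sqrt N+\sum_{m\ge N}(m+1)\sqrt m\, e^{-\sqrt m}$, uniformly in $n$. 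This gives continuity, and in particular $U$ is finite on $\Brohsigp$.

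\textbf{Second bullet.} Write
\[
V(c)=U(c)-\sum c\rs\ln Q\rs-\sum c\rs .
\]
The last term is continuous in the $X$-norm, being bounded by $\norm{c}_X$. By \eqref{assCoeff}, $|\ln Q\rs|\le K(r+s)$ for $r+s$ large, and since $Q\rs>0$ this extends to all $(r,s)$ with a larger $K$. Hence $c\mapsto\sum c\rs\ln Q\rs$ is a bounded linear functional on $X$, hence strongly continuous. Finally, strong convergence $c^n\to c$ in $X^+$ implies weak* convergence inside some $\Brohsigp$ with $\rho,\sigma$ large, so continuity of $U$ follows from the first bullet. Note that continuity at $c$ with $\rho(c)=0$ or $\sigma(c)=0$ is also covered, since we simply pick $\rho,\sigma>0$ large.

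\textbf{Main obstacle.} The only delicate step is the uniform tail estimate for the negative part of $c\ln c$. Both the two-dimensional counting (about $m$ indices per level $m$) and the size of the thresholds must be chosen so that the resulting series converges. The choice $e^{-\sqrt{r+s}}$ achieves this with room to spare.
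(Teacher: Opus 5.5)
Your proof is correct. The second bullet is exactly the paper's argument: you write $V=U-\sum c_{r,s}\ln Q_{r,s}-\sum c_{r,s}$, and \eqref{assCoeff} gives $|\ln Q_{r,s}|\in\mathcal{O}(r+s)$, so the middle term is a bounded linear functional on $X$.

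For the first bullet the skeleton is also the paper's: finitely many coordinates converge, and the tail is bounded uniformly on $\Brohsigp$. The difference is how you get the tail bound.
\begin{itemize}
\item The paper uses $x|\ln x|\le C_\eps(x^{1+\eps}+x^{1-\eps})$, then $c_{r,s}\le(\rho+\sigma)/(r+s)$ and H\"older's inequality. The same $x^{1-\eps}$-plus-H\"older device reappears, on a single level $r+s=N+1$, in the proof of Theorem \ref{entropyEquation}.
\item You split each term at the threshold $e^{-\sqrt{r+s}}$. This needs no auxiliary inequality. It also makes the two-dimensional multiplicity (there are $m+1$ indices with $r+s=m$) visibly harmless, because of the stretched-exponential decay.
\end{itemize}

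This is a genuine advantage of your route, because the paper's write-up does not account for that multiplicity. First, its bound $\sum_{r+s\ge N}(\rho+\sigma)^{1+\eps}(r+s)^{-1-\eps}=\sum_{m\ge N}(m+1)(\rho+\sigma)^{1+\eps}m^{-1-\eps}$ diverges on $\Omega$. Second, its H\"older factor $\sum_{r+s\ge N}(r+s)^{(\eps-1)/\eps}$ is finite only if $\frac{\eps-1}{\eps}<-2$, i.e.\ $\eps<1/3$; the stated condition $\frac{\eps-1}{\eps}<-1$ is the one-dimensional one. Both slips are easy to repair: in the tail $c_{r,s}<1$, so $x^{1+\eps}\le x^{1-\eps}$ there, and one can take $\eps$ small. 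Your argument simply never runs into them.

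The only thing to tidy in your write-up is the false start with the threshold $e^{-(r+s)}$; delete it and state the $e^{-\sqrt{r+s}}$ split directly.
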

\begin{proof}
    For part one, notice that for any $0<\eps<1$, there is an $C_\eps$, s.t. $x|\ln x|\le C_\eps (x^{1+\eps} + x^{1-\eps})$ for all $x\ge 0$. Then, we calculate for any $N\in \N$, via $c\rs \le \frac{\rho+\sigma}{r+s}$
\begin{equation*}
    \begin{split}
        \sumrs[N]c\rs |\ln c\rs| &\le C_\eps \sumrs[N] c\rs^{1+\eps} + c\rs^{1-\eps} \\
                                 &\le C_\eps \sumrs[N] \frac{(\rho+\sigma)^{1+\eps}}{(r+s)^{1+\eps}} + C_\eps \sumrs[N] ((r+s)c\rs)^{1-\eps}(r+s)^{\eps-1}\\
        \overset{\text{H\"older}}&{\le}C_\eps \sumrs[N] \frac{(\rho+\sigma)^{1+\eps}}{(r+s)^{1+\eps}} + C_\eps \Bigg(\underbrace{\sumrs[N] (r+s)c\rs}_{\le \rho+\sigma}\Bigg)^{1-\eps}\left(\sumrs[N](r+s)^\frac{\eps-1}\eps\right)^\eps,
    \end{split}
\end{equation*}
which goes to zero independent of $c$, once we fix an $\eps$ satisfying $\frac{\eps-1}\eps < -1$. For part two, we already have that $\sumrs c\rs \ln c\rs -c\rs$ is even weak* continuous and due to \eqref{assCoeff} we find, that $|\ln Q\rs| \in \mathcal{O}(r+s)$, so 
\[\sumrs c\rs \ln Q\rs\]
is continuous with respect to the strong topology.

\end{proof}
When proving the entropy equation, we will again run into the trouble of treating the boundary term for $N$ in \eqref{weakmcBDfinite}.
To resolve the issue, we have to assume a higher moment bound on the initial conditions.
    \begin{theorem}\textit{(Entropy equation)}\label{entropyEquation}\\
        Assume that $\ar\rs,\as\rs,\br\rps,\bs\rsp \in \mathcal{O}((r+s)^\alpha)$, for some $0\le \alpha <1$ and assumption \eqref{assCoeff}. Furthermore, assume $\rho(c^0) \neq 0 \neq \sigma(c^0)$ and $\sumrs (r+s)^{1+\alpha} c^0\rs < \infty.$ Then, any solution $c \colon [0,T)\to X^+$ to \eqref{mcBD} will satisfy 
\begin{equation*}
    \Vzw(c(t)) = \Vzw(c(0)) - \int_0^t D(c(\tau)) \dd \tau \quad\text{ for all }0\le t < T.
\end{equation*}

    \end{theorem}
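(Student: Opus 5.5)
We follow the one-component strategy of \cite{penrose:foundations}: first establish a truncated entropy identity for $\Vn$, then pass to the limit $N\to\infty$. By Proposition \ref{solutionPositive}, $c\rs(t)>0$ for $t>0$, so the logarithms are defined. Each $c\rs$ with $r+s\ge2$ is absolutely continuous by the mild formulation, and so are the monomer densities. Hence $\Vn(c(t))$ is absolutely continuous on $[\delta,t]$ with $\odv{}{t}\Vn = \sumrs[1][N]\ln(c\rs/Q\rs)\odv{}{t}c\rs$. Inserting \eqref{mcBD} and summing by parts as in \eqref{weakmcBDfinite} with $g\rs=\ln(c\rs/Q\rs)$ gives
\begin{equation*}
\Vn(c(t))-\Vn(c(\delta)) = -\int_\delta^t \Dn(c)\dd\tau + \int_\delta^t \mathcal B_N\dd\tau,
\end{equation*}
where $\mathcal B_N$ collects three contributions: the outgoing fluxes at the level $r+s=N$, namely $-\sumrs[N][N]\bigl(\ln\tfrac{c\rps}{Q\rps}\Jr\rs+\ln\tfrac{c\rsp}{Q\rsp}\Js\rs\bigr)$; the monomer terms $-\ln\monr\sumrs[N]\Jr\rs-\ln\mons\sumrs[N]\Js\rs$, which come from the infinite sums in the monomer equations; and terms that telescope. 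I will check that the $(r,s)=(0,1),(1,0)$ bookkeeping reproduces exactly the $-\ln\monr,-\ln\mons$ shifts in $D$.

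Next, we control the boundary term. Assumption \eqref{assCoeff} gives $|\ln Q\rs|\le K(r+s)$. The monomer part of $\mathcal B_N$ is bounded by $|\ln c_{1,0}|\sumrs[N](|\Jr\rs|+|\Js\rs|)$. On $[\delta,t]$ the monomers are bounded away from zero by positivity and continuity, and $\int\sumrs[N]|J|\to0$ as in the existence proof, so this part vanishes. For the level-$N$ fluxes we need $\int_\delta^t \sumrs[N][N](r+s)(\ar\rs\monr c\rs+\br\rps c\rps+\dots)\dd\tau\to0$. By the growth assumption this quantity is at most $K\int\sumrs[N][N+1](r+s)^{1+\alpha}c\rs$. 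Lemma \ref{mcBDApriori}, applied with $\beta=1+\alpha-1=\alpha$ and a de la Vallée function $\vphi$ chosen via $\sumrs(r+s)^{1+\alpha}c^0\rs<\infty$ (using a $\vphi(m)\cdot m^\alpha$ moment), makes the tails $\sumrs[N](r+s)^{1+\alpha}c\rs$ uniformly small in $\tau$, so this part vanishes as well.

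The $c\ln c$ parts are more delicate: terms like $\Jr\rs\ln c\rps$ involve $c\rps|\ln c\rps|$ and $c\rs|\ln c\rps|$. We handle the first by $x|\ln x|\le C(x^{1-\eps}+x^{1+\eps})$, as in Lemma \ref{entropyStrongCont}, combined with the extra moment, using $\alpha<1$ to leave room for the Hölder step. The cross term $c\rs\ln c\rps$ is the real difficulty, since $c\rps$ may be tiny while $c\rs$ is not. I would split by sign. Where $\ln c\rps\ge 0$ the bound is trivial. Where it is negative, we write $-\Jr\rs\ln(c\rps/Q\rps)$ and pair it with $-\Jr\rs\ln(\monr c\rs/Q\rs)$, noting that $J\ln(\text{ratio})\ge0$. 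This lets us take $\liminf$ with a one-sided inequality, which yields $V(c(t))\le V(c(\delta))-\int_\delta^t D$ via Fatou, since the terms of $D$ are nonnegative. For the reverse inequality we use that $D\in L^1$ and the truncated identity to bound $\int|\mathcal B_N|$ from the other side. If that fails, we fall back on showing directly that $\int_\delta^t\sumrs[N][N]\ar\rs\monr c\rs|\ln c\rps|\to0$ along a subsequence of $N$. This follows because $\sum_N\int\sumrs[N][N](\cdots)<\infty$ once we know $\int\sumrs (r+s)^{\alpha}\ar\rs c\rs|\ln c\rps|<\infty$. That summability is the step I expect to be the main obstacle.

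Finally, we let $N\to\infty$. $\Vn\to V$ pointwise by Lemma \ref{entropyStrongCont}, and $\Dn\to D$ in $L^1(\delta,t)$ by monotone convergence, since each summand is nonnegative under detailed balance. We then let $\delta\to0$, using continuity of $c$ in $X$ (Proposition \ref{solutionContinuity}) together with strong continuity of $V$ (Lemma \ref{entropyStrongCont}) to get $V(c(\delta))\to V(c(0))$. Monotone convergence in $\int_\delta^t D$ then gives the identity on $[0,t]$.
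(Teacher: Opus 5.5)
Your overall architecture matches the paper: the truncated identity for $\Vn$ on $[\delta,t]$, the $\ln Q\rs$ and monomer tails controlled via $|\ln Q\rs|\le K(r+s)$, the $(r+s)^{1+\alpha}$ moment and Lemma \ref{mcBDApriori}, monotone convergence in $\Dn$, and $\delta\to0$ via Proposition \ref{solutionContinuity} and Lemma \ref{entropyStrongCont}. However, the step you flag as the main obstacle is a genuine gap. You leave the reverse inequality open, so you never reach the equality the theorem asserts. Your fallback, summability of $\int\sum(r+s)^\alpha\ar\rs c\rs|\ln c\rps|$, is not available, because nothing controls how small $c\rps$ can be relative to $c\rs$. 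The pairing with $J\ln(\text{ratio})\ge0$ does not resolve anything either. It only moves the level-$N$ summand between $\Dnp$ and $\Dn$, and it trades the cross term $\ar\rs\monr c\rs\ln c\rps$ for the other cross term $\br\rps c\rps\ln c\rs$.

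The missing idea is that each cross term has a definite sign and can simply be dropped, provided you use a different grouping for each inequality. First, mass conservation gives $c\rs(t)\le\norm{c(0)}/(r+s)<1$ for all $t$ once $r+s$ is large. So every logarithm at the boundary level is negative, uniformly in time, and your case $\ln c\rps\ge0$ never occurs. Second, write $\odv{}{t}\Vn=-\Dnp+B_1=-\Dn+B_2$. Here $B_1$ contains $-\Jr\rs\ln c\rps$ and $B_2$ contains $-\Jr\rs\ln c\rs$ at level $r+s=N$, plus the $\ln Q$ and monomer tails you already handle, and the same holds for $\Js\rs$. Now $-\Jr\rs\ln c\rps=-\ar\rs\monr c\rs\ln c\rps+\br\rps c\rps\ln c\rps\ge\br\rps c\rps\ln c\rps$, and $-\Jr\rs\ln c\rs=-\ar\rs\monr c\rs\ln c\rs+\br\rps c\rps\ln c\rs\le-\ar\rs\monr c\rs\ln c\rs$. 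Both one-sided bounds therefore involve only $x\ln x$ at a single index. Using $x|\ln x|\le C_\eps x^{1-\eps}$ on $[0,1]$ and H\"older over the $\bigO(N)$ indices of a level set, both are $\bigO(N^{\alpha+2\eps-1})\to0$ uniformly in $t$; this is where $\alpha<1$ enters. Hence $-\int_{t_1}^{t_2}\Dnp\dd\tau+o(1)\le\Vn(c(t_2))-\Vn(c(t_1))\le-\int_{t_1}^{t_2}\Dn\dd\tau+o(1)$. Both bounds converge to $-\int_{t_1}^{t_2}D\dd\tau$ by monotone convergence, which gives equality without Fatou and without any summability of cross terms. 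Incidentally, your displayed identity pairs $-\Dn$ with the outgoing $\ln(c\rps/Q\rps)$ boundary term, but that term belongs with $-\Dnp$. Exploiting exactly this freedom of grouping is what closes the argument.
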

\begin{proof}
    We start by noticing, that due to our bounds on the coefficients we find
\[\sumrs |\Jr\rs| + |\Js\rs| \le C(\norm{c(0)}) \sumrs (r+s)^\alpha c\rs, \]
which converges uniformly, so we can differentiate $\monr$ and $\mons$ almost everywhere. Furthermore, we can differentiate $c\rs (\ln\frac{c\rs}{Q\rs}-1)$ for positive times due to Proposition \ref{solutionPositive} and obtain for almost every $t\in(0,T)$,
\begin{equation*}
    \begin{split}
      \odv{}{t}\Vzwn(c(t)) \overset{\eqref{weakmcBDfinite}}&{=} -\Dnp(c(t)) - \sumrs[N][] \Jr\rs \ln\frac{c\rps}{Q\rps} + \Js\rs \ln\frac{c\rsp}{Q\rsp} - \sumrs[N+1] \Jr\rs \ln \monr + \Js\rs \ln\mons\\
       &= -\Dn(c(t)) - \sumrs[N-1][] \Jr\rs \ln\frac{c\rs}{Q\rs} + \Js\rs \ln\frac{c\rs}{Q\rs} - \sumrs[N] \Jr\rs \ln \monr + \Js\rs \ln\mons.
    \end{split}
\end{equation*}
In order to take $N\to\infty$, we need some bounds on the remainder terms. First we notice, that due to $0<c\rs<\frac{\norm{c(0)}}{r+s}<1$ for large $r+s$, we have $\ln c\rs <0$ for large $r+s$. Consequently, we obtain for $N$ large enough
\begin{equation*}
    \begin{split}
        -\sumrs[N][]\Jr\rs \ln c\rps + \Js\rs \ln c\rsp
        \ge \sumrs[N][] \br\rps c\rps \ln c\rps + \bs\rsp c\rsp \ln c\rsp \eqqcolon (*).
    \end{split}
\end{equation*}
Now we use that for $0\le x \le 1$ we find for every $0<\eps<1$ a constant such that $x|\ln x| \le C_\eps x^{1-\eps}$, to find
\begin{equation*}
    \begin{split}
        |(*)| \overset{\text{ass $\br,\bs$}}&{\le} C(N+1)^\alpha \sumrs[N+1][] c\rs|\ln c\rs|
        \le C_\eps (N+1)^\alpha \sumrs[N+1][] c\rs^{1-\eps}\\
                                            &= C_\eps (N+1)^\alpha \sumrs[N+1][] (N+1)^{\eps-1}((N+1)c\rs)^{1-\eps}\\
        \overset{\text{H\"older}}&{\le} C_\eps (N+1)^{\alpha+2\eps-1} \left(\sumrs[N+1][] (N+1)c\rs\right)^{1-\eps}.
    \end{split}
\end{equation*}
Taking $\eps$ so small, that $\alpha+2\eps < 1$, we obtain $|(*)| \to 0$ as $N\to\infty$ uniformly in $t$. Similarly, we find
\begin{equation*}
    -\sumrs[N-1][] \Jr\rs \ln c\rs + \Js\rs \ln c\rs \le - \sumrs[N-1][] \ar\rs \monr c\rs \ln c\rs + \as\rs \mons c\rs \ln c\rs \to 0 \text{ uniformly in }t,
\end{equation*}
with the same argument as for $(*)$.
Next, we use $|\ln Q\rs| \in \bigO(r+s)$, to find for some constant $K<\infty$
\begin{equation*}
    \begin{split}
        \int_{t_1}^{t_2} \sumrs[N][] |\Jr\rs| |\ln  Q\rps| + |\Js\rs| |\ln Q\rsp |\dd t
        \le K \int_{t_1}^{t_2} \sumrs[N] (r+s)^{1+\alpha} c\rs \dd \tau \overset{\text{Lemma \ref{mcBDApriori}}} \to 0 \text{ as } N\to \infty.
    \end{split}
\end{equation*}
Again the same argument allows to conclude
\[\left|\int_{t_1}^{t_2} \sumrs[N-1][] \Jr\rs \ln  Q\rs + \Js\rs \ln Q\rs \dd t \right| \to 0 \text{ as }N\to \infty.\]
Finally, we find 
\begin{equation*}
    \begin{split}
        &\left|\int_{t_1}^{t_2}\sumrs[N+1]\Jr\rs \ln \monr + \Js\rs\ln\mons \dd t\right| \\
        &\quad\le \underbrace{\sup\limits_{t_1\le t\le t_2}|\ln\monr |}_{\le C} \int_{t_1}^{t_2} \sumrs[N+1]|\Jr\rs|\dd t + \underbrace{\sup\limits_{t_1\le t\le t_2}|\ln\mons |}_{\le C} \int_{t_1}^{t_2} \sumrs[N+1] |\Js\rs| \dd t \to 0 \text{ as }N\to\infty.
    \end{split}
\end{equation*}
Putting everything together, we have shown
\begin{equation*}
    \begin{split}
        -\int_{t_1}^{t_2} \Dn(c(t)) \dd t + o(1) \le \Vzwn(c(t_2)) - \Vzwn(c(t_1)) \le -\int_{t_1}^{t_2} \Dnp(c(t)) \dd t + o(1).
    \end{split}
\end{equation*}
Here the left and right hand side converge, due to monotone convergence, as $N\to\infty$ whereas $\Vzwn$ converges due to Lemma \ref{entropyStrongCont}, once we noticed that $\Vzwn(c) = \Vzw(c\mathds{1}_{\{r+s\le N\}})$. Lastly, as we take $t_1\to 0$, we know by Proposition \ref{solutionContinuity} that $c(t_1)\to c(0)$ in $X^+$. And by Lemma \ref{entropyStrongCont} we find $\Vzw(c(t_1))\to \Vzw(c(0))$. Exploiting once more monotone convergence for the integral we arrive at
\begin{equation*}
    \Vzw(c(t_2)) = \Vzw(c(0)) - \int_0^{t_2} \D(c(t))\dd t. \qedhere
\end{equation*}

\end{proof}
\begin{remark}
  One can quickly check that under the assumption $\br\rps,\bs\rsp \in \bigO((r+s)^\alpha)$ it suffices to ask for $\sumrs (r+s)^{1+\alpha} c^0\rs < \infty$ to carry out the uniqueness proof. The boundary term, that was denoted $C_N$, can then be estimated as in the proof of Theorem \ref{entropyEquation}. Hence, we may replace the ``any solution'' by ``the solution'' in Theorem \ref{entropyEquation}.
\end{remark}
Lastly, we want to show that we can always find a solution such that $\odv{}{t} V(c(t)) \le -D.$
    \begin{theorem}\textit{}\label{entropyInequality}\\
        Let $c^0\in X^+$ with $\rho(c^0) \neq 0 \neq \sigma(c^0)$ and assume that $\ar\rs + \as\rs \in \mathcal{O}(r+s),$ as well as \eqref{assCoeff}.
Then, there exists a solution $c$ to \eqref{mcBD} with initial data $c^0$ on $[0,\infty)$ satisfying 
\begin{equation*}
  \Vzw(c(t_2)) \le \Vzw(c(t_1)) - \int_{t_1}^{t_2} D(c(\tau)) \dd \tau \quad\text{ for all }0\le t_1 < t_2 < \infty.
\end{equation*}

    \end{theorem}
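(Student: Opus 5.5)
We will construct the solution exactly as in the proof of Theorem \ref{mcBDExistenceDela}, via the truncated systems $c^n$ (fluxes cut off at $r+s\ge n$, initial data $c^{0n}=c^0\mathds{1}_{\{r+s\le n\}}$). Then we pass to the limit in the entropy inequality rather than in an entropy equality. For the truncated problem the system is a finite ODE with detailed balance (we use the same $Q\rs$ restricted to $r+s\le n$). Since $\rho(c^0)\neq0\neq\sigma(c^0)$, for $n$ large the truncated solution is strictly positive for $t>0$, by the same argument as in Proposition \ref{solutionPositive}. So $\Vn(c^n)$ is differentiable for $t>0$. Because there are no boundary terms, the computation \eqref{weakmcBDfinite} gives exactly
\[
V^n(c^n(t_2)) = V^n(c^n(t_1)) - \int_{t_1}^{t_2} D^{n}(c^n(\tau))\dd\tau .
\]
Here $D^n$ contains only the finitely many fluxes with $r+s\le n-1$. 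Each summand has the form $-(x-y)(\ln x-\ln y)\cdot(-1)\ge 0$, i.e.\ $\Jr\rs\ln\frac{\ar\rs\monr c\rs}{\br\rps c\rps}\ge0$ by detailed balance. The identity also holds at $t_1=0$ by continuity of the finite sum.

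Next we pass to the limit $n\to\infty$ along the subsequence from Theorem \ref{mcBDExistenceDela}, for which $c^n\rs(t)\to c\rs(t)$ for every $(r,s)$ and every $t$ (uniformly on compacts). For the dissipation we use Fatou. For fixed $M$ and $n>M$, every term of $D^n$ is nonnegative, so $D^n(c^n)\ge D^M(c^n)$. Moreover $D^M(c^n(\tau))\to D^M(c(\tau))$ pointwise for $\tau>0$, since the limit is positive by Proposition \ref{solutionPositive}; at points where some limit component vanishes, lower semicontinuity of $(x-y)(\ln x-\ln y)$ with value $+\infty$ suffices. Fatou gives $\liminf_n\int_{t_1}^{t_2}D^n(c^n)\ge\int_{t_1}^{t_2}D^M(c)$, and letting $M\to\infty$ by monotone convergence yields $\int_{t_1}^{t_2}D(c)$.

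For the entropies we need two things: $\lim_n V^n(c^n(t)) = V(c(t))$ at the upper time, or at least $\limsup_n V^n(c^n(t_2))\ge V(c(t_2))$; and, at the lower time, $\lim_n V^n(c^n(t_1))=V(c(t_1))$. This is the main obstacle, because $V$ is only strongly continuous (Lemma \ref{entropyStrongCont}), while $c^n(t)\to c(t)$ is a priori only weak*. To upgrade the convergence, we use the de la Vallée-Poussin bound: as in the existence proof, pick $\vphi$ with $\sumrs\vphi(r+s)c^0\rs<\infty$. Lemma \ref{mcBDApriori} with $\beta=0$ gives $\sup_n\sup_{[0,T]}\sumrs\vphi(r+s)c^n\rs\le K$. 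Then the tails $\sumrs[N](r+s)c^n\rs(t)\le \frac{N}{\vphi(N)}K$ are uniformly small. Together with componentwise convergence, this gives $c^n(t)\to c(t)$ strongly in $X$ for every $t$.

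Now write $V^n(c^n)=U(c^n\mathds{1}_{\{r+s\le n\}})-\sumrs[1][n]c^n\rs(\ln Q\rs+1)$. The first part converges by the weak* continuity of $U$ on $\Brohsigp$ (Lemma \ref{entropyStrongCont}). The second part converges because $|\ln Q\rs|\le K(r+s)$ by \eqref{assCoeff} and the convergence in $X$ is strong. Hence $V^n(c^n(t))\to V(c(t))$ for all $t$, including $t=0$, where $c^{0n}\to c^0$ strongly. Combining these limits in the truncated identity gives the claimed inequality for all $0\le t_1<t_2$. The only loss is in the dissipation, where Fatou yields an inequality. A remaining subtlety is that at $t_1=0$ positivity may fail, but the truncated identity holds there directly, so nothing extra is needed.
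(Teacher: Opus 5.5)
Your proposal is correct and follows the paper's proof essentially step for step: you use the exact entropy identity for the truncated systems, and you upgrade $c^n(t)\weakstar c(t)$ to strong convergence in $X$ via the de la Vall\'ee-Poussin bound of Lemma \ref{mcBDApriori}. You then get convergence of $V^n(c^n(t))$ from the strong continuity of $V$, and a $\liminf$ bound on the dissipation from $D^n\ge D^m$, exactly as the paper does. The only difference is that you apply Fatou's lemma using positivity of the limit for $\tau>0$, where the paper uses upper and lower bounds on $c^n$ after reducing to $c^0\rs>0$; your version gets the same dissipation inequality without needing uniform lower bounds.
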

\begin{proof}
    As seen in the proof of Theorem \ref{entropyEquation}  we can assume without loss of generality $c^0\rs >0$ for all $r,s$. Let $c^n$ be the solution to the cut-off system as in the proof of Theorem \ref{mcBDExistenceDela}. Then, for any $0\le t_1<t_2<\infty$
\begin{equation}
  V^n(c^n(t_2)) = V^n(c^n(t_1)) - \int_{t_1}^{t_2} D^n(c^n(\tau))\dd\tau
\end{equation}
holds true. From the proof of Theorem \ref{mcBDExistenceDela}, we have up to some subsequence, $c^n(t) \weakstar c(t),$ where $c(t)$ is a solution to \eqref{mcBD}. But due to Lemma \ref{mcBDApriori} we actually have $c^n(t) \to c(t)$ in $X$. Therefore, Lemma \ref{entropyStrongCont} implies $V^n(c^n(t_1)) \to V(c(t_1))$ and $V^n(c^n(t_2))\to V(c^n(t_2))$. Because $c^n$ and $c$ are bounded from above and below, we finally have for any $m\in\N$
\begin{equation}
  \liminf_{n\to\infty} \int_{t_1}^{t_2} D^n(c^n) \dd\tau
  \ge \liminf_{n\to\infty} \int_{t_1}^{t_2} D^m(c^n) \dd\tau
  = \int_{t_1}^{t_2} D^m(c) \dd\tau.
\end{equation}
Since this bound holds for any $m$ we find $\displaystyle \liminf_{n\to\infty} \int_{t_1}^{t_2} D^n(c^n) \dd\tau \ge \int_{t_1}^{t_2} D(c) \dd\tau$.

\end{proof}

\section{Steady States}\label{sectionSteadyStates}
Due to Theorem \ref{massConservation} we know  $c(t) \subset B^+_{\rho,\sigma}$, which is compact in the weak* topology according to Proposition \ref{weakMetric}. Hence, we know that along some subsequence $c(t_j)\weakstar d$ for some $d\in B^+_{\rho,\sigma}$. It is possible to deduce from Theorem \ref{entropyEquation}, that all possible limits must satisfy $D(d) = 0$. Now, for a positive sequence $c\rs$, let us denote $\cbar\rs \coloneqq \monr^r \mons^s Q\rs$ and $u\rs \coloneqq \frac{c\rs}{\cbar\rs},$ then we find 
\begin{equation}
  \begin{split}
    \Jr\rs(c) \overset{\text{\eqref{fluxes} and \eqref{detailedBalance}}}&= \ar\rs\monr c\rs - \ar\rs \frac{Q\rs}{Q\rps} c\rps = - \ar\rs \monr \cbar\rs \delr u\rps \text{ and }\\
    \Js\rs(c) \overset{\text{\eqref{fluxes} and \eqref{detailedBalance}}}&= \as\rs\mons c\rs - \as\rs \frac{Q\rs}{Q\rsp} c\rsp = - \as\rs \mons \cbar\rs \dels u\rsp.
  \end{split}
\end{equation}
Putting this into the definition of $\D$ (Notation \ref{entropies}) yields 
\begin{equation}\label{Dviau}
  \D(c) =\sumrs \ar\rs \monr \cbar\rs \Big(\delr u\rps \delr \ln(u\rps)\Big)
         + \as\rs \mons \cbar\rs \Big(\dels u\rsp \dels \ln(u\rsp)\Big).
\end{equation}
By the monotonicity of the logarithm, $D(c) = 0$ implies $\delr u\rps = 0 =\dels u\rsp$. Since $Q_{1,0} = Q_{0,1} =1$, we find $u\rs \equiv 1.$ Hence, all limit points must be of the form $d\rs = z^r w^s Q\rs \eqqcolon \cinfzw\rs$. Given the mass constraint $\cinfzw \in B^+_{\rho,\sigma},$ it is sensible to study these steady states. We will first do so on finite sub-regions of $\Omega$ (say $\Gamma$), where for all $\rho,\sigma>0$, we can find a unique steady state, that attains these masses. Then, we consider the limit $\Gamma \to \Omega$, which yields a nice description of all steady states with finite mass (Theorem \ref{steadyStateLimit}). When establishing the limit, the main tool will be the relative entropy between steady states.
To start, let us fix the following Notation.
    \begin{notation}\textit{}\label{Gamma}\\
        We will denote with $\Gamma$ a subset of $\Omega$ satisfying 
\[ \Gamma \subset \Omega \text{ such that }(1,0),(0,1) \in \Gamma \text{ and }|\Gamma|<\infty.\]
Furthermore, we will denote for any $z,w\ge 0$ 
\[\cbargamma \coloneqq \left(z^r w^s Q\rs\right)_{r,s \in \Gamma}.\]
Finally, for a sequence of regions $\Gamma_n,$ we say $\Gamma_n \to \Omega$, if for any $(r,s)\in \Omega,$ there is an $M\in\N$, such that for all $n\ge M$, we have $(r,s)\in \Gamma_n.$

    \end{notation}
    \begin{lemma}\textit{(Unique finite steady state with given mass)}\label{finiteSteadyStateDB}\\
        Fix $\rho,\sigma \ge 0$ and $\Gamma$. Then, there exists unique $z^\Gamma,w^\Gamma \ge 0,$ such that 
\begin{equation*}
  \rho^\Gamma(\cbargamma) \coloneqq \sumgamma r \cbargamma\rs = \rho 
  \qquad\text{ and }\qquad
  \sigma^\Gamma(\cbargamma) \coloneqq \sumgamma s \cbargamma\rs = \sigma 
\end{equation*}

    \end{lemma}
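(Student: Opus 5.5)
The plan is to handle the main case $\rho,\sigma>0$ with a convex potential in logarithmic variables, and to treat the degenerate cases $\rho=0$ or $\sigma=0$ separately by a one-dimensional monotonicity argument. First, the boundary. Because $(1,0)\in\Gamma$ and $Q_{1,0}=1$, the sum $\rho^\Gamma(\cbargamma)$ contains the term $1\cdot z^1w^0Q_{1,0}=z$, and all its terms are nonnegative. Hence $\rho^\Gamma(\cbargamma)\ge z$, so $\rho^\Gamma(\cbargamma)=0$ holds iff $z=0$. Likewise $\sigma^\Gamma(\cbargamma)\ge w$, so $\sigma^\Gamma(\cbargamma)=0$ holds iff $w=0$.

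\emph{Degenerate cases.} If $\rho=\sigma=0$, the only choice is $z=w=0$. If $\rho=0<\sigma$, we are forced to take $z=0$. Then
\[
\sigma^\Gamma=\sum_{(0,s)\in\Gamma} s\,w^sQ_{0,s}
\]
is a polynomial in $w$ with nonnegative coefficients and containing the term $w$. It is therefore continuous and strictly increasing from $0$ to $\infty$, so a unique $w>0$ attains the value $\sigma$. The case $\sigma=0<\rho$ is symmetric.

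\emph{Main case $\rho,\sigma>0$.} Here any solution must have $z,w>0$, so I substitute $z=e^x$, $w=e^y$ with $(x,y)\in\R^2$. I then consider
\[
F(x,y)\coloneqq\sum_{(r,s)\in\Gamma} e^{rx+sy}Q\rs-\rho x-\sigma y .
\]
Its gradient is exactly $\big(\rho^\Gamma(\cbargamma)-\rho,\ \sigma^\Gamma(\cbargamma)-\sigma\big)$. So the solutions of the lemma correspond one-to-one to the critical points of $F$.

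The Hessian of $F$ is $\sum_\Gamma e^{rx+sy}Q\rs\,(r,s)^T(r,s)$. This is positive semidefinite, and it is positive definite because the vectors $(1,0)$ and $(0,1)$ both occur with positive weights. Hence $F$ is strictly convex, and it has at most one critical point. This gives uniqueness.

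For existence, I will show that $F$ is coercive, so that a minimiser exists and is a critical point. Since $F$ is convex, it suffices to check that $F\to\infty$ along every ray $t(a,b)$ with $(a,b)\neq0$ as $t\to\infty$.
\begin{itemize}
\item If $a>0$, the term $e^{ta}$ coming from $(1,0)$ dominates the linear part, so $F\to\infty$. The case $b>0$ is handled the same way using $(0,1)$.
\item If $a,b\le0$, all exponential terms stay bounded, while $-\rho ta-\sigma tb=t(-\rho a-\sigma b)\to\infty$ because $\rho,\sigma>0$ and $(a,b)\neq0$.
\end{itemize}

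The main point needing care is upgrading coercivity along rays to a genuine bounded sublevel set. For a convex function this is standard: a closed convex sublevel set that is unbounded contains a ray from any of its points, and $F$ would stay bounded along that ray, contradicting the ray condition. Alternatively, one can argue directly by compactness of the unit circle together with continuity of the recession function. Either way, the minimiser exists, it is the unique critical point, and $(z^\Gamma,w^\Gamma)=(e^x,e^y)$ is the required pair.
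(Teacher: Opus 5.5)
Your proof is correct, and it takes a genuinely different route from the paper.

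\emph{The paper's route.} It stays in the variables $(z,w)$. For fixed $z$ it solves $\sigma^\Gamma=\sigma$ and $\rho^\Gamma=\rho$ for $w$ by one-dimensional monotonicity, which produces two curves $w_\sigma(z)$ and $w_\rho(z)$. It records their boundary behaviour at $z=0$ and $z=\bar z$. It then uses the implicit function theorem and Cauchy--Schwarz to show that the curves cross exactly once.

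\emph{Your route.} You identify the two equations with $\nabla F=0$ for a strictly convex, coercive potential in logarithmic variables. The analytic core is shared: positive definiteness of your Hessian is exactly the strict Cauchy--Schwarz inequality $\bigl(\sum_\Gamma rs\,\bar c_{r,s}\bigr)^2<\bigl(\sum_\Gamma r^2\bar c_{r,s}\bigr)\bigl(\sum_\Gamma s^2\bar c_{r,s}\bigr)$, with $\bar c_{r,s}=z^rw^sQ_{r,s}$, that the paper uses. The difference is that you apply it globally rather than as a comparison of slopes.

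\emph{What your route buys.} It is more robust in several ways:
\begin{itemize}
\item You never divide by $\sum_\Gamma rs\,z^rw^sQ_{r,s}$. This sum vanishes when $\Gamma$ contains no mixed clusters, and the paper's slope formulas then break down, even though the problem simply decouples.
\item You avoid the slope comparison. In the paper it is only meaningful at intersection points, because elsewhere the two derivatives are evaluated at different values of $w$.
\item You explicitly cover $\rho=0$ or $\sigma=0$. The statement allows these cases, but the paper's boundary values tacitly exclude them.
\item Nothing in your argument is specific to two monomer types.
\end{itemize}
It also meshes with the later sections. For $z,w>0$ one has $F(\ln z,\ln w)-\min F=H^\Gamma[z^\Gamma,w^\Gamma\,|\,z,w]$. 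So the relative entropy between steady states of Definition~\ref{steadyStatesEntropy} is the Bregman divergence of your potential, which is precisely the convexity structure that drives Theorem~\ref{steadyStateLimit}.

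\emph{What the paper's route buys.} It gives a concrete planar picture of the solution as the unique intersection of two monotone level curves, using only one-variable tools.
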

\begin{proof}
  For fixed $z$ the map
\begin{equation*}
  w \mapsto \sumgamma s z^r w^s Q\rs
\end{equation*}
is continuous and strictly monotone with $\sigma^\Gamma(z^r 0^s Q\rs) = 0$ and $\sigma^\Gamma(z^r w^s Q\rs) \to \infty $ for $w \to \infty$. So we can find a unique $w_\sigma(z)$, such that  $\sumgamma s z^r w_\sigma(z)^s Q\rs = \sigma.$ The same way, we can define up to some maximal value of $z$, say $\bar z$, the function $w_\rho(z)$, that satisfies $\sumgamma r z^r w_\rho(z)^s Q\rs = \rho$.
In particular, we have 
\begin{equation*}
  w_\rho(0) = \infty,\quad w_\rho(\bar z) = 0,\quad w_\sigma(0)<\infty \quad \text{ and }\quad w_\sigma(\bar z)>0.
\end{equation*}
Since, we are looking for an intersection of $w_\rho$ and $w_\sigma$, we will finish the proof, by showing that they are continuous and the difference is monotonically decreasing. By the implicit function theorem we have 
\begin{equation*}
  \odv{w_\rho}{z} = - \frac 1 {\odv{\rho^\Gamma} w } \odv {\rho^\Gamma} z = -\frac w z \frac {\sumgamma r^2 z^r w^s Q\rs}{\sumgamma rs z^r w^s Q\rs} 
  \quad\text{ and }\quad
  \odv{w_\sigma}{z} = - \frac 1 {\odv{\sigma^\Gamma} w } \odv {\sigma^\Gamma} z = -\frac w z \frac {\sumgamma rs z^r w^s Q\rs}{\sumgamma s^2 z^r w^s Q\rs}.
\end{equation*}
In particular these functions are continuous, and we have 
\begin{equation*}
   \odv{w_\rho}{z} -  \odv{w_\sigma}{z}
   = \frac w z \frac {\left(\sumgamma r s z^r w^s Q\rs\right)^2 - \left(\sumgamma r^2 z^r w^s Q\rs\right)\left(\sumgamma s^2 z^r w^s Q\rs\right)}{\left(\sumgamma s^2 z^r w^s Q\rs\right)\left(\sumgamma rs z^r w^s Q\rs\right)} \overset{\text{C.S.}}< 0.
\end{equation*}
 
\end{proof}
The first step in taking $\Gamma \to \Omega$ is the following definition.
    \begin{definition}\textit{(Region of existence)}\label{regionOfExistence}\\
        Let us denote $\cinfzw\rs \coloneqq z^r w^s Q\rs$. Then, we define the region of existence as 
\[\Ex \coloneqq\left\{(z,w) \mid z\ge 0, w\ge 0 \text{ and }\rho(\cinfzw) + \sigma(\cinfzw)<\infty\right\} .\]

\end{definition}
Note, that for all $(z,w)\in\Ex$ the sequence $\cinfzw\rs$ is a steady state to \eqref{mcBD}, because they satisfy $\Jr\rs \equiv 0 \equiv \Js\rs$, as seen in the introduction of this section. The form of $\Ex$ depends on $Q\rs$ and can have very different shapes, as we can see in the following examples.
    \begin{example}\textit{}\label{steadyStateDBExample}\\
        In the following examples, we always consider $r+s$ large. In particular, we do not need to worry about $Q_{1,0} = Q_{0,1}$.
\begin{itemize}
  \item \underline{Idealised Kelvin model:} $\displaystyle Q\rs = \lambda^r \mu^s \binom{r+s}{r}e^{-(r+s)^{\frac 2 3}}$\\ 
    This is a simple model describing droplet formation \cite{wilemski:Coefficients}. Here  $0<\lambda,\mu<\infty$ are given physical constants. For $0\le z,w < \infty$ and a large $M$ we find
    \[ \sumrs[M] (r+s) \cinfzw\rs = 
    \sum\limits_{n=M}^{\infty} n e^{-n^{\frac 2 3}} \sumrs[n][] (\lambda z)^r (\mu w)^s \binom{r+s}{r} = \sum\limits_{n=M}^{\infty} n (\lambda z + \mu w)^n e^{-n^{\frac 2 3}}.\]
    So we can determine $\Ex = \{\lambda z + \mu w \le 1\}$.

  \item \underline{$\partial \Ex \not \in C^1$:} $\displaystyle Q\rs = \lambda^r \mu^s e^{-(r+s)^{\frac 2 3}}$\\
    The part of the boundary, that is not coming from $z\ge0$ and $w\ge 0$ does not need to be smooth. With fixed $0<\lambda,\mu<\infty$ we see for $0<z,w<\infty$
    \[ \sumrs[M] (r+s) \cinfzw\rs Q\rs = \sumrs[M] (r+s) (\lambda z)^r (\mu w)^s e^{-(r+s)^{\frac 2 3}} < \infty \iff \max(\lambda z ,\mu w)\le 1.\]
  \item \underline{$\Ex$ not compact:} $\displaystyle Q\rs = \frac {e^{-r^{\frac 2 3}}} {s!}  $\\
    Without the assumption \eqref{assCoeff}, $\Ex$ does not need to be bounded. If we denote $f(z) \coloneqq \sum\limits_{r=0}^{\infty} z^r e^{-r^\frac 2 3},$ we obtain for $0<z,w<\infty$
    \[\sumrs s z^r w^s Q\rs = f(z) w e^w \quad\text{ and }\quad \sumrs r z^r w^s Q\rs = z f^\prime(z) e^w\]
    and hence $\Ex = \{z\le 1\}.$
\end{itemize}
    We have displayed $\Ex$ as well as $(\rho(\cinfzw),\sigma(\cinfzw))$ for all $(z,w)\in \Ex$ in Figure \ref{figure:examplesDBE}.

    \end{example}
\begin{figure}[!htb]
  \centering
  \begin{subfigure}[t]{0.45\textwidth}
    \scalebox{0.85}{
      \begin{tikzpicture}
        \begin{scope}[shift={(0,0)}]
          \node at (1,0.5) (a) {$\Ex$}; 
          \draw[->] (0, 0) -- (3.5, 0) node[below] {$z$};
          \draw[->] (0, 0) -- (0, 3.5) node[left] {$w$};
          \draw[scale=1, domain=0:2.5, smooth, variable=\x, black] plot ({\x}, {2.5-\x});
          \filldraw[LightSlateBlue] (1,1.5) circle (1pt);
          \filldraw[DarkKhaki] (0.75,0.875) circle (1pt);
          \draw[<->] (2.9,2) to[bend left=30] (4.2,2);
          \node at (3.5,2.5) (b) {$(\rho,\sigma)(\Ex)$};
        \end{scope}
        \begin{scope}[shift={(4.5,0)}]
            \draw[->] (0, 0) -- (3.5, 0) node[below] {$\rho$};
            \draw[->] (0, 0) -- (0, 3.5) node[left] {$\sigma$};
            \draw[scale=1, domain=0:2, smooth, variable=\x, black] plot ({\x}, {2-\x});
            \filldraw[LightSlateBlue] (0.8,1.2) circle (1pt);
            \filldraw[DarkKhaki] (0.6,0.8) circle (1pt);
          \end{scope}
      \end{tikzpicture}}
    \caption{The idealised Kelvin model, a physically relevant example: $Q\rs = \lambda^r \mu^s \binom{r+s}{r}e^{-(r+s)^{\frac 2 3}}$.}
  \end{subfigure}
  \hfill
    \begin{subfigure}[t]{0.45\textwidth}
      \scalebox{0.85}{
      \begin{tikzpicture}[
              scale=1,
              declare function={
                f(\x,\k) = 
                  \x^1*1*exp(-(1+\k)^(2/3))+
                  \x^2*2*exp(-(2+\k)^(2/3))+
                  \x^3*3*exp(-(3+\k)^(2/3))+
                  \x^4*4*exp(-(4+\k)^(2/3))+
                  \x^5*5*exp(-(5+\k)^(2/3))+
                  \x^6*6*exp(-(6+\k)^(2/3))+
                  \x^7*7*exp(-(7+\k)^(2/3))+
                  \x^8*8*exp(-(8+\k)^(2/3))+
                  \x^9*9*exp(-(9+\k)^(2/3))+
                  \x^10*10*exp(-(10+\k)^(2/3))+
                  \x^11*11*exp(-(11+\k)^(2/3))+
                  \x^12*12*exp(-(12+\k)^(2/3))+
                  \x^13*13*exp(-(13+\k)^(2/3))+
                  \x^14*14*exp(-(14+\k)^(2/3))+
                  \x^15*15*exp(-(15+\k)^(2/3))+
                  \x^16*16*exp(-(16+\k)^(2/3))+
                  \x^17*17*exp(-(17+\k)^(2/3))+
                  \x^18*18*exp(-(18+\k)^(2/3))+
                  \x^19*19*exp(-(19+\k)^(2/3))+
                  \x^20*20*exp(-(20+\k)^(2/3))+
                  \x^21*22*exp(-(21+\k)^(2/3))+
                  \x^22*22*exp(-(22+\k)^(2/3));
                rho(\z,\w) = 
                  \w^0 * f(\z,0)+
                  \w^1 * f(\z,1)+
                  \w^2 * f(\z,2)+
                  \w^3 * f(\z,3)+
                  \w^4 * f(\z,4)+
                  \w^5 * f(\z,5)+
                  \w^6 * f(\z,6)+
                  \w^7 * f(\z,7)+
                  \w^8 * f(\z,8)+
                  \w^9 * f(\z,9)+
                  \w^10 * f(\z,10)+
                  \w^11 * f(\z,11)+
                  \w^12 * f(\z,12)+
                  \w^13 * f(\z,13)+
                  \w^14 * f(\z,14)+
                  \w^15 * f(\z,15)+
                  \w^16 * f(\z,16)+
                  \w^17 * f(\z,17)+
                  \w^18 * f(\z,18)+
                  \w^19 * f(\z,19)+
                  \w^20 * f(\z,20)+
                  \w^21 * f(\z,21)+
                  \w^22 * f(\z,22);
                sigma(\z,\w) = rho(\w,\z);
              }
          ]
          \begin{scope}[shift={(0,0)}]
            \node at (1,0.7) (a) {$\Ex$}; 
            \draw[->] (0, 0) -- (3.5, 0) node[below] {$z$};
            \draw[->] (0, 0) -- (0, 3.5) node[left] {$w$};
            \draw[scale=1, domain=0:2.5, smooth, variable=\x, black] plot ({\x}, {1.75});
            \draw[scale=1, domain=0:1.75, smooth, variable=\y, black] plot ({2.5}, {\y});
            \filldraw[LightSlateBlue] (2.5,0.8) circle (1pt);
            \filldraw[DarkKhaki] (2.3,1.5) circle (1pt);
            \draw[<->] (2.9,2) to[bend left=30] (4.2,2);
              \node at (3.5,2.5) (b) {$(\rho,\sigma)(\Ex)$};
          \end{scope}
          \begin{scope}[shift={(4.5,0)}]
              \draw[->] (0, 0) -- (3.5, 0) node[below] {$\rho$};
              \draw[->] (0, 0) -- (0, 3.5) node[left] {$\sigma$};
              \draw[scale=0.3, domain=0:1, smooth, variable=\x, black] plot ({rho(\x,1)}, {sigma(\x,1});
              \draw[scale=0.3, domain=0:1, smooth, variable=\y, black] plot ({rho(1,\y)}, {sigma(1,\y)});
              \filldraw[LightSlateBlue] ({0.3*rho(1,0.8/1.75)},{0.3*sigma(1,0.8/1.75)}) circle (1pt);
              \filldraw[DarkKhaki] ({0.3*rho(2.3/2.5,1.5/1.75)},{0.3*sigma(2.3/2.5,1.5/1.75)}) circle (1pt);
          \end{scope}
      \end{tikzpicture}}
    \caption{Dropping the binomial coefficient in a) leads to $Q\rs = \lambda^r \mu^s e^{-(r+s)^{\frac 2 3}}$.}
  \end{subfigure}
  \hfill
  \vspace{10pt}
  \begin{subfigure}{0.45\textwidth}
  \scalebox{0.85}{
  \begin{tikzpicture}[
                  declare function={
                      xln(\x) = ifthenelse(\x == 0, 0, \x*ln(\x));
                    }]
      \begin{scope}[shift={(0,0)}]
        \node at (1.2,2.2) (a) {$\Ex$}; 
        \draw[->] (0, 0) -- (3.5, 0) node[below] {$z$};
        \draw[->] (0, 0) -- (0, 3.5) node[left] {$w$};
        \draw[scale=1, domain=0:3.5, smooth, variable=\y, black] plot ({2.5}, {\y});
        \filldraw[LightSlateBlue] (2.5,1.5) circle (1pt);
        \filldraw[DarkKhaki] (1.5,1.0) circle (1pt);
        \draw[<->] (2.9,2) to[bend left=30] (4.2,2);
        \node at (3.5,2.5) (b) {$(\rho,\sigma)(\Ex)$};
      \end{scope}
      \begin{scope}[shift={(4.5,0)}]
        \draw[->] (0, 0) -- (3.5, 0) node[below] {$\rho$};
        \draw[->] (0, 0) -- (0, 3.5) node[left] {$\sigma$};
        \draw[scale=1, domain=1:3.1, smooth, variable=\x, black] plot ({\x}, {xln(\x)});
        \filldraw[LightSlateBlue] (2.2,{xln(2.2)}) circle (1pt);
        \filldraw[DarkKhaki] (0.9,0.9) circle (1pt);
      \end{scope}
  \end{tikzpicture}}
    \caption{An example violating assumption \eqref{assCoeff}, but still applicable to Section \ref{sectionSteadyStates}: $\displaystyle Q\rs = \frac {e^{-r^{\frac 2 3}}} {s!}$.}
  \end{subfigure}
  \caption{We have plotted $\Ex$ for the three examples given in \ref{steadyStateDBExample}. Furthermore, we have indicated for which $(\rho,\sigma)$, there is a corresponding equilibrium attaining these masses.}
  \label{figure:examplesDBE}
\end{figure}%

Coming back to the general study, if we fix $\rho,\sigma$ and take $\Gamma \to \Omega$, then Fatou tells us that the finite steady states $z^\Gamma, w^\Gamma$ will be close to $\Ex$. However, we can exploit the following convexity structure to analyse the limit precisely.
    \begin{definition}\textit{(Relative entropy between steady states)}\label{steadyStatesEntropy}\\
        For $z,w,u,v > 0$ and $\Gamma$ we define
\begin{equation*}
  H^\Gamma[u,v|z,w] \coloneqq \sumgamma \cinfzw\rs \Psi\left(\frac{\cinfuv\rs}{\cinfzw\rs}\right),\quad \text{ where } \Psi(x) = x \ln x + 1 -x.
\end{equation*}
Since $\Psi(x) \ge 0$, we get in particular
\begin{equation*}
  0 \le H^\Gamma[u,v|z,w] = \ln\left( \frac u z \right)\rho^\Gamma(\cinfuv) + \ln\left( \frac v w\right) \sigma^\Gamma(\cinfuv) + \sumgamma \cinfzw\rs -\cinfuv\rs.
\end{equation*}
Lastly, for $0< z,w,u,v$ with $(z,w),(u,v) \in \Ex$ we may also define 
\begin{equation*}
  H[u,v|z,w] \coloneqq\ln\left( \frac u z \right)\rho(\cinfuv) + \ln\left( \frac v w\right) \sigma(\cinfuv) + \sumrs \cinfzw\rs -\cinfuv\rs.
\end{equation*}

\end{definition}
This immediately yields, the following uniqueness in $\Ex$ and lets us determine the limit $\Gamma \to \Omega$.
    \begin{corollary}\textit{(Unique steady state in $\Ex$)}\label{steadyStateDB}\\
        For each $\rho,\sigma$ there is at most one $(z,w) \in \Ex$, with $\rho(\cinfzw) = \rho$ and $\sigma(\cinfzw) = \sigma$.

    \end{corollary}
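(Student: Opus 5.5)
The idea is to use the relative entropy between steady states from Definition \ref{steadyStatesEntropy} and its symmetrisation. Suppose $(z,w),(u,v)\in\Ex$ both satisfy $\rho(\cinfzw)=\rho(\cinfuv)=\rho$ and $\sigma(\cinfzw)=\sigma(\cinfuv)=\sigma$. We want to show $(z,w)=(u,v)$.

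The degenerate cases come first. If $\rho>0$, then $z>0$ and $u>0$, since for $z=0$ every term $r z^r w^s Q\rs$ vanishes. Likewise $\sigma>0$ forces $w,v>0$. If $\rho=0$, then $\sum r z^r w^s Q\rs=0$, and since $Q_{1,0}=1$ this gives $z=0$; similarly $u=0$. The remaining coordinate is then handled by the one-dimensional argument below, or trivially if $\sigma=0$ as well. The main case is therefore $z,w,u,v>0$.

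In the main case, take a sequence $\Gamma_n\to\Omega$, for instance $\Gamma_n=\{r+s\le n\}$, and consider the symmetrised finite entropy $H^{\Gamma_n}[u,v|z,w]+H^{\Gamma_n}[z,w|u,v]$. The terms $\sum_{\Gamma_n}\cinfzw\rs-\cinfuv\rs$ cancel, which leaves
\[
0\le \ln\Big(\frac uz\Big)\big(\rho^{\Gamma_n}(\cinfuv)-\rho^{\Gamma_n}(\cinfzw)\big)+\ln\Big(\frac vw\Big)\big(\sigma^{\Gamma_n}(\cinfuv)-\sigma^{\Gamma_n}(\cinfzw)\big).
\]
Every finite mass converges to the full mass because $(z,w),(u,v)\in\Ex$. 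Hence the right-hand side tends to $0$, by equality of the masses. This only shows that the left-hand side tends to $0$, so we need a strictly positive lower bound that does not depend on $n$.

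The lower bound is obtained by restricting to the monomers. Since $\Psi\ge0$, and since $(1,0),(0,1)\in\Gamma_n$ for all $n$, we get
\[
H^{\Gamma_n}[u,v|z,w]\ge z\,\Psi\Big(\frac uz\Big)+w\,\Psi\Big(\frac vw\Big),
\]
using $Q_{1,0}=Q_{0,1}=1$, and analogously for the swapped entropy. Letting $n\to\infty$ yields
\[
0\ge z\Psi(u/z)+w\Psi(v/w)+u\Psi(z/u)+v\Psi(w/v).
\]
Since $\Psi(x)=0$ only for $x=1$, this gives $u=z$ and $v=w$.

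The one-dimensional degenerate case, say $z=u=0$ with $w,v>0$, works the same way using only the $(0,1)$ term. Alternatively one can argue directly: $w\mapsto\sum s w^s Q_{0,s}$ is strictly increasing, so two values with the same $\sigma$ coincide. There is no real obstacle here. The only care needed is justifying the limit $n\to\infty$, which requires finiteness of the masses and hence membership in $\Ex$, and noting that the cancellation of the $\sum(\cinfzw\rs-\cinfuv\rs)$ terms already happens at the finite level. This avoids needing $\sum\cinfzw\rs<\infty$ separately, although that also follows from $\rho+\sigma<\infty$.
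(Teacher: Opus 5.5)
Your proof is correct, and it is the argument the paper intends. The paper's written proof merely repeats the statement, but the remark preceding it and the end of the proof of Theorem \ref{steadyStateLimit} show that the intended route is your symmetrisation $H[u,v|z,w]+H[z,w|u,v]=\ln(u/z)\,(\rho(\cinfuv)-\rho(\cinfzw))+\ln(v/w)\,(\sigma(\cinfuv)-\sigma(\cinfzw))=0$ of the relative entropy between steady states from Definition \ref{steadyStatesEntropy}.

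Two differences are worth noting. The degenerate cases $\rho=0$ or $\sigma=0$ lie outside the paper's $H$, which is defined only for positive parameters, and you settle them correctly by one-dimensional monotonicity. Your finite truncation is harmless but unnecessary: since $\sum\cinfzw\rs\le\rho+\sigma<\infty$ on $\Ex$, you can apply the identity on all of $\Omega$ directly.
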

\begin{proof}
  For each $\rho,\sigma$ there is at most one $(z,w) \in \Ex$, with $\rho(\cinfzw) = \rho$ and $\sigma(\cinfzw) = \sigma$.
 
\end{proof}
    \begin{theorem}\textit{}\label{steadyStateLimit}\\
        Assume $Q\rs$ is such that there exist $0< u,v$ with $(u,v) \in \Ex$. Then, for any fixed $\rho_0,\sigma_0 > 0$, we get $z^{\Gamma_n},w^{\Gamma_n} \to z,w>0 $ as $\Gamma_n \to \Omega$, where the pair $(z,w)\in \Ex$ is uniquely determined by 
\begin{equation}\label{steadyStateLimitEq}
  \ln\left(\frac z u\right)\left(\rho_0 - \rho(\cinfzw)\right) + \ln\left(\frac w v\right) \left(\sigma_0 - \sigma(\cinfzw)\right) \ge 0 \text{ for all } 0< u,v \text{ with }(u,v)\in \Ex.
\end{equation}

    \end{theorem}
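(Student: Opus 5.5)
The plan is to work throughout with the finite relative entropy $H^{\Gamma_n}$ evaluated at the finite steady states $(z_n,w_n)\coloneqq(z^{\Gamma_n},w^{\Gamma_n})$ of Lemma \ref{finiteSteadyStateDB}. Since $\rho^{\Gamma_n}(\bar c^{z_n,w_n})=\rho_0$ and $\sigma^{\Gamma_n}(\bar c^{z_n,w_n})=\sigma_0$, the nonnegativity in Definition \ref{steadyStatesEntropy} (with the roles of the two states swapped) gives, for every fixed $0<u,v$ with $(u,v)\in\Ex$,
\begin{equation*}
  \ln\Big(\frac{z_n}{u}\Big)\rho_0+\ln\Big(\frac{w_n}{v}\Big)\sigma_0+\sum_{(r,s)\in\Gamma_n}\cinfuv\rs-\sum_{(r,s)\in\Gamma_n}\bar c^{z_n,w_n}\rs\ge 0 .
\end{equation*}

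\textbf{Compactness.} Since $(1,0),(0,1)\in\Gamma_n$ and $Q_{1,0}=Q_{0,1}=1$, we have $z_n\le\rho_0$ and $w_n\le\sigma_0$. Fix one admissible pair $(u,v)$ with $u,v>0$, which exists by assumption. In the inequality above, drop the negative last sum and bound $\sum\cinfuv\rs\le\rho(\cinfuv)+\sigma(\cinfuv)<\infty$. Together with $w_n\le\sigma_0$, this bounds $\ln z_n$ from below; symmetrically, $\ln w_n$ is bounded from below. Hence $(z_n,w_n)$ lies in a compact subset of $(0,\infty)^2$. For any convergent subsequence with limit $(z,w)$, Fatou's lemma gives $\rho(\cinfzw)\le\rho_0$ and $\sigma(\cinfzw)\le\sigma_0$, so $(z,w)\in\Ex$ with $z,w>0$.

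\textbf{Deriving \eqref{steadyStateLimitEq}.} This is the main step. Passing to the limit directly only yields
\begin{equation*}
  A+\sum\cinfuv\rs-L\ge0,\qquad A\coloneqq\ln(z/u)\rho_0+\ln(w/v)\sigma_0,\qquad L\coloneqq\limsup_n\sum_{\Gamma_n}\bar c^{z_n,w_n}\rs .
\end{equation*}
Here $L$ is not known to equal $\sum\cinfzw\rs$, and the wrong mass terms appear. The fix is to use the log-convexity of $\Ex$ and vary the comparison point.
\begin{itemize}
  \item For $\theta\in[0,1]$ set $(u_\theta,v_\theta)\coloneqq(z^\theta u^{1-\theta},w^\theta v^{1-\theta})$. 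By Hölder, $(u_\theta,v_\theta)\in\Ex$.
  \item Define $G(\theta)\coloneqq\sum_{r,s}\bar c^{u_\theta,v_\theta}\rs$. It is convex in $\theta$ (a sum of exponentials of linear functions) and finite on $[0,1]$. By dominated convergence, using $\bar c^{u_\theta,v_\theta}\rs\le\max(\cinfuv\rs,\cinfzw\rs)$, it is continuous.
  \item Applying the limit inequality with $(u_\theta,v_\theta)$ gives $F(\theta)\coloneqq(1-\theta)A+G(\theta)-L\ge0$ for all $\theta<1$.
  \item Letting $\theta\to1$ gives $G(1)\ge L$. Fatou gives $L\ge G(1)$, so $L=\sum\cinfzw\rs$.
  \item Now $F(\theta)=(1-\theta)A+G(\theta)-G(1)\ge0$. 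Dividing by $1-\theta$ and letting $\theta\uparrow1$ yields $A\ge G'(1^-)$.
  \item The convex difference quotients are monotone, so monotone convergence gives $G'(1^-)=\ln(z/u)\rho(\cinfzw)+\ln(w/v)\sigma(\cinfzw)$.
\end{itemize}
Substituting this value of $G'(1^-)$ into $A\ge G'(1^-)$ is exactly \eqref{steadyStateLimitEq}.

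\textbf{Uniqueness and convergence.} Suppose $(z_1,w_1),(z_2,w_2)\in\Ex$ with all entries positive both satisfy \eqref{steadyStateLimitEq}. Insert each as the test pair in the other's inequality and add. The terms with $\rho_0$ and $\sigma_0$ cancel, leaving
\begin{equation*}
  \ln(z_1/z_2)\big(\rho(\bar c^{z_2,w_2})-\rho(\bar c^{z_1,w_1})\big)+\ln(w_1/w_2)\big(\sigma(\bar c^{z_2,w_2})-\sigma(\bar c^{z_1,w_1})\big)\ge0 .
\end{equation*}
In logarithmic coordinates $(\ln z,\ln w)$, the map $(\rho,\sigma)$ is the gradient of the convex function $\sum z^rw^sQ\rs$. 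Monotonicity of this gradient makes the left side $\le0$. Strict convexity, via the Cauchy--Schwarz argument of Lemma \ref{finiteSteadyStateDB} applied along the segment in log coordinates, forces equality only when $(z_1,w_1)=(z_2,w_2)$. Hence every subsequential limit of $(z_n,w_n)$ coincides with the unique solution of \eqref{steadyStateLimitEq}, and by compactness the whole sequence converges.

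The main obstacle is the third step: identifying $L$ and computing the one-sided derivative at the boundary point $\theta=1$, where $\cinfzw$ may lie on $\partial\Ex$.
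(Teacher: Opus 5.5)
Your proof is correct, but the derivation of \eqref{steadyStateLimitEq} and the uniqueness step follow a different route from the paper's.

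\textbf{The paper's route.} The paper keeps a finite piece of the relative entropy. For $\Gamma_M\subset\Gamma_n$ it uses $H^{\Gamma_M}[z^{\Gamma_n},w^{\Gamma_n}|u,v]\le H^{\Gamma_n}[z^{\Gamma_n},w^{\Gamma_n}|u,v]$. It rewrites the right-hand side by a three-point identity centred at the limit $(z,w)$, lets $n\to\infty$ and then $M\to\infty$. The term $H[z,w|u,v]$ then appears on both sides and cancels, leaving \eqref{steadyStateLimitEq}. Uniqueness comes from the same identity along the approximating sequence, using $H^{\Gamma_n}[z^{\Gamma_n},w^{\Gamma_n}|z,w]\to0$.

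\textbf{Your route.} You discard the whole entropy. For a fixed test pair, the left-hand side of your limit inequality exceeds that of \eqref{steadyStateLimitEq} by exactly $H[z,w|u,v]\ge0$, so it is genuinely weaker. You recover the loss by reading the inequality over all test pairs at once. It says that $(\ln z,\ln w)$ minimises the convex function $\xi\mapsto\sum_{r,s}Q\rs e^{r\xi_1+s\xi_2}-\xi_1\rho_0-\xi_2\sigma_0$ over the log-image of $\Ex\cap(0,\infty)^2$, which is convex. \eqref{steadyStateLimitEq} is then the first-order variational inequality of this minimisation. You extract it along the geometric segments $(u_\theta,v_\theta)$ via the left derivative at $\theta=1$. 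Your monotone-convergence justification there is sound: the difference quotients are bounded below by the summable $\cinfzw\rs-\cinfuv\rs$. The limit $\sum_{r,s}\cinfzw\rs\bigl(r\ln\frac{z}{u}+s\ln\frac{w}{v}\bigr)$ is finite because $(z,w)\in\Ex$. Uniqueness then becomes strict monotonicity of the gradient of this convex function. That is a property of \eqref{steadyStateLimitEq} itself, with no reference to the approximating sequence.

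\textbf{What each buys.} Your route gives a transparent variational meaning to \eqref{steadyStateLimitEq} and an intrinsic uniqueness statement. It costs log-convexity of $\Ex$ and one-sided derivatives at a point that may lie on $\partial\Ex$. The paper's route stays within relative-entropy identities, which it reuses in Lemma \ref{relativeEntropyTwoSteadyStates} and Corollary \ref{relativeEntropyMinimal}.

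\textbf{Two small corrections.}
\begin{itemize}
\item $L=\sum_{r,s}\cinfzw\rs$ is immediate, contrary to your remark. Since $\sum_{r+s\ge N}\bar c^{z_n,w_n}\rs\le(\rho_0+\sigma_0)/N$ uniformly in $n$, the zeroth moments converge. This is what the paper obtains via Proposition \ref{weakMetric}. Your convexity detour for $L$ is correct but unnecessary.
\item For strictness in the uniqueness step, avoid a Hessian or Cauchy--Schwarz computation. Second moments may be infinite when both points lie on $\partial\Ex$. Instead, along $t\mapsto\sum_{r,s}\bar c^{z_2,w_2}\rs e^{t\ell\rs}$ with $\ell\rs=r\ln\frac{z_1}{z_2}+s\ln\frac{w_1}{w_2}$, the $(1,0)$ or $(0,1)$ summand is strictly convex when $(z_1,w_1)\neq(z_2,w_2)$. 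That already makes the two one-sided endpoint derivatives differ strictly.
\end{itemize}
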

\begin{proof}
  We start by bounding $z^\Gamma$ and $w^\Gamma$ independent of $\Gamma$ from below. Let $0<u,v$ with $(u,v) \in\Ex$. From $z^\Gamma \le \rho_0$ and $w^\Gamma \le \sigma_0$ we find
\begin{equation*}
  \begin{split}
    0 &\le H^\Gamma[z^\Gamma,w^\Gamma|u,v] = \rho_0 \ln \frac {z^\Gamma}{u} + \sigma_0 \ln \frac{w^\Gamma} w + \sumgamma \cinfuv\rs - \cbargamma\rs \\
    &\le 
    \begin{cases}
      \rho_0 \ln \frac{z^\Gamma}{u} + \sigma_0 \ln\frac{\sigma_0}{w} + \sumrs \cinfuv\rs \to -\infty \text{ as } z^{\Gamma} \to 0.\\
      \rho_0 \ln \frac{\rho_0}{u} + \sigma_0 \ln\frac{w^\Gamma}{w} + \sumrs \cinfuv\rs \to -\infty \text{ as } w^{\Gamma} \to 0.
    \end{cases}
  \end{split}
\end{equation*}

Because $z^{\Gamma_n},w^{\Gamma_n}$ are bounded from above and below we can extract a subsequence (not relabelled) that converges to some $0<z,w$ with $(z,w) \in\Ex$. Fix an $M\in\N$ and let $N\in \N$ be such that $\Gamma_M \subset \Gamma_n$ for all $n\ge N.$ We then find for any $0<u,v$ with $(u,v)\in\Ex$
\begin{equation*}
  \begin{split}
    H^{\Gamma_M}[z,w|u,v] \overset{n\to\infty}&\leftarrow H^{\Gamma_M}[z^{\Gamma_n},w^{\Gamma_n}|u,v] \overset{\Psi \ge 0} \le H^{\Gamma_n}[z^{\Gamma_n},w^{\Gamma_n}|u,v] \\
    &= \rho_0 \ln\left(\frac {z^{\Gamma_n}}{u}\right) + \sigma_0 \ln\left(\frac{w^{\Gamma_n}}{v}\right) + \sumgamma[\Gamma^n] \cinfuv\rs - \cbar^{\Gamma_n}\rs\\
    &= H^{\Gamma_n}[z,w|u,v]
    + \rho_0 \ln\left(\frac{z^{\Gamma_n}}z\right) + \sigma_0\ln\left(\frac{w^{\Gamma_n}}w\right) + \sumgamma[\Gamma^n] \cinfzw\rs - \cbar^{\Gamma_n}\rs\\
    &\qquad\qquad+ \ln\left(\frac z u\right)\left(\rho_0 - \rho^{\Gamma_n}(\cinfzw)\right)
    + \ln\left(\frac w v\right)\left(\sigma - \sigma^{\Gamma_n}(\cinfzw)\right)\\
    \overset{n\to\infty}&\to H[z,w|u,v] + \ln\left(\frac z u\right)\left(\rho_0 - \rho(\cinfzw)\right)
    + \ln\left(\frac w v\right)\left(\sigma - \sigma(\cinfzw)\right).
  \end{split}
\end{equation*}
Since this bound holds for any $M$, we find 
\begin{equation*}
  \ln\left(\frac z u\right)\left(\rho_0 - \rho(\cinfzw)\right) + \ln\left(\frac w v\right) \left(\sigma_0 - \sigma(\cinfzw)\right) \ge 0 \text{ for all } 0< u,v \text{ with }(u,v) \in\Ex.
\end{equation*}
It remains to show, that this determines a unique steady state. For that purpose, we first show $H^{\Gamma_n}[z^{\Gamma_n},w^{\Gamma_n}|z,w] \to 0$ as $n\to \infty.$ Note that $c^{\Gamma_n}\rs \indicator{(r,s) \in \Gamma_n} \weakstar \cinfzw$ (Proposition \ref{weakTopology}). And so, by Proposition \ref{weakMetric}, we obtain 
\[\sum\limits_{r,s\in \Gamma^n} c^{\Gamma_n} \to \sumrs \cinfzw.\]
From this $H^{\Gamma_n}[z^{\Gamma_n},w^{\Gamma_n}|z,w] \to 0$ follows, because $\sum\limits_{r,s\in \Gamma^n} \cinfzw\rs \to \sumrs \cinfzw.$
Now, assume $0<z_2,w_2$ with $(z_2,w_2)\in\Ex$ also has property \eqref{steadyStateLimitEq}. Then we find 
\begin{equation*}
  \begin{split}
    0 \overset{n\to\infty}&\leftarrow H^{\Gamma_n}[z^{\Gamma_n},w^{\Gamma_n}|z,w] = H^{\Gamma_n}[z^{\Gamma_n},w^{\Gamma_n}|z_2,w_2] + H^{\Gamma_n}[z_2,w_2|z,w]\\
    &\qquad\qquad\qquad\qquad\qquad+ \ln\left(\frac {z_2}z\right) (\rho_0 - \rho^{\Gamma_n}(\cbar^{z_2,w_2})) +  \ln\left(\frac {w_2}w\right) (\rho_0 - \sigma^{\Gamma_n}(\cbar^{z_2,w_2}))\\
    &\ge H^{\Gamma_n}[z_2,w_2|z,w]+ \ln\left(\frac {z_2}z\right) (\rho_0 - \rho^{\Gamma_n}(\cbar^{z_2,w_2})) +  \ln\left(\frac {w_2}w\right) (\rho_0 - \sigma^{\Gamma_n}(\cbar^{z_2,w_2}))\\
    \overset{n\to\infty}&\to H[z_2,w_2|z,w]+ \ln\left(\frac {z_2}z\right) (\rho_0 - \rho(\cbar^{z_2,w_2})) +  \ln\left(\frac {w_2}w\right) (\rho_0 - \sigma(\cbar^{z_2,w_2}))\\
    \overset{\text{Ass. on }z_2,w_2}&\ge H[z_2,w_2|z,w]\ge 0.
  \end{split}
\end{equation*}
So $H[z_2,w_2|z,w] = 0$ and therefore $z_2 = z$ and $w_2 = w.$
 
\end{proof}
Note, that assumption \eqref{assCoeff} implies the existence of $0<u,v$ with $(z,w) \in\Ex$, so Theorem \ref{steadyStateLimit} captures in particular the situation of Theorem \ref{entropyEquation}. Even though it is not applicable under assumption \eqref{assCoeff}, we still want to quickly comment on one-dimensional regions of existence. The example below illustrates, that knowledge on $\Ex$ alone can not determine the limit---in stark contrast to Theorem \ref{steadyStateLimit}.
    \begin{remark}\textit{{(One-dimensional $\Ex$)}}\label{steadyStateOneDE}\\
        If there are no steady states with positive Type I and Type II mass, the situation is more delicate.
Consider for example  $Q\rs = \binom{r+s}{r} e^{-(r+s)^{\frac 2 3}} + \delta_{s = 2r} e^{r^2}.$ Then $\Ex = [0,1] \times \{0\} \cup \{0\} \times [0,1]$. Now let $z_n,w_n$ be such that 
\[ \rho^n(z_n,w_n) \coloneqq \sumrs[1][n] r z_n^r w_n^s Q\rs = \rho_0 \text{ and } \sigma^n(z_n,w_n) \coloneqq \sumrs[1][n] s z_n^r w_n^s Q\rs = \sigma_0.\]
In particular we have
\begin{equation*}
  \begin{split}
    \rho^n(z_n,w_n) &= \frac{z_n}{z_n + w_n} \summ[1][n]m(z_n+w_n)^m e^{-m^{\frac 2 3}} + \summ[1][\lfloor n/ 3\rfloor] m z_n^m w_n^{2m} e^{m^2} = \rho_0 \\
    \sigma^n(z_n,w_n) &= \frac{w_n}{z_n + w_n} \summ[1][n]m(z_n+w_n)^m e^{-m^{\frac 2 3}} + 2\summ[1][\lfloor n/ 3 \rfloor] m z_n^m w_n^{2m} e^{m^2} = \sigma_0,
  \end{split}
\end{equation*}
which we can rearrange to 
\[ \left(\frac{z_n-w_n/2}{z_n + w_n}\right) \summ[1][n]m(z_n+w_n)^m e^{-m^{\frac 2 3}} = \rho_0 - \frac{\sigma_0}2.\]
Now, let us assume that $z_n \to z < 1$ and $w_n \to 0$. Then we can take the limit to obtain
\begin{equation*}
  \begin{split}
    \summ m z^m e^{-m^{\frac 2 3}} = \rho_0 - \frac{ \sigma_0} 2.
  \end{split}
\end{equation*}
If $z_n \to 1$ and $w_n \to 0$, we find with Fatou and the fact that $\summ[1][n]m(z_n+w_n)^m e^{-m^{\frac 2 3}} \le \rho_0 + \sigma_0$ that
\begin{equation*}
  \begin{split}
    \summ m e^{-m^{\frac 2 3}} \le \rho_0 - \frac{ \sigma_0} 2.
  \end{split}
\end{equation*}
Analogously we find for $w_n \to w < 1$ and $z_n \to 0$
\begin{equation*}
    \summ m w^me^{-m^{\frac 2 3}} = \sigma_0 - 2\rho_0 
\end{equation*}
and for $w_n \to 1$ with $z_n \to 0$
\begin{equation*}
    \summ m e^{-m^{\frac 2 3}} \le \sigma_0 - 2\rho_0.
\end{equation*}
Combining this with the positivity of $w_n,z_n$ we have completely characterised the limiting behaviour. Strikingly, the Type II (or Type I) mass escaping to infinity takes with it some of the Type I (or Type II) mass. In particular, analysing only the one-dimensional steady states in $\Ex$ cannot possibly capture the limiting behaviour.\\
Of course, the Type II mass can also escape independent of the Type I mass, which will happen if we change the Dirac in the example above to $\delta_{r=0}$. Then, we are left with the classical situation, where $z_n \to z$ with either $\rho(z,0) = \rho_0$ or $z=1$.

    \end{remark}

\section{Relative Entropies}
Now that we have understood the steady states, we can study the relative entropy of any $c\in X^+$ with respect to some steady state. It turns out, that they are just $\Vzw + \sumrs \cinfzw\rs + \rho(c) \ln \frac 1 z + \sigma(c) \ln \frac 1 w$, which means solutions to \eqref{mcBD} decrease them over time. As mentioned in the introduction, in the one-component system, solutions to \eqref{ocBD} are minimising sequences of the relative entropy. In anticipation that this may carry over to the two-component system, we start with a general study of the relative entropies. We can determine the minimum of the relative entropies over all sequences with fixed masses $\rho,\sigma$ and characterise the minimising sequences. We will finish this section, by considering the relative entropy with respect to the quasi steady state, which proved to be the crucial quantity, when studying the convergence rate to the limit \cite{canizo:EDEstimate,jabin:convergenceRate}.\\
Remember that $\Ex$ denotes the region of existence (Definition \ref{regionOfExistence}) and assume in this Section, that there are $0<z,w$ with $(z,w)\in \Ex$ (which follows from assumption \eqref{assCoeff}).
    \begin{definition}\textit{(Relative entropy)}\label{relativeEntropy}\\
        Let $0<z,w$ with $(z,w)\in\Ex$. We denote the relative entropy of $c\rs \in X^+$ with respect to $\cinfzw$ as
\begin{equation*}
    H[c|\cinfzw] \coloneqq \sumrs \cinfzw\rs \Psi\left(\frac{c\rs}{\cinfzw\rs}\right),
    \text{ where }\Psi(x) \coloneqq x\ln(x) + 1 -x.
\end{equation*}

\end{definition}
Since solutions of \eqref{mcBD} conserve mass, it is sensible to study $H$ under that constraint.
    \begin{lemma}\textit{(Connection between steady states)}\label{relativeEntropyTwoSteadyStates}\\
        Let $c\in X^+_{\rho,\sigma} \coloneqq \Big\{ c\in X^+ \,\Big|\, \sumrs r c\rs = \rho \text{ and } \sumrs s c\rs = \sigma\Big\} $, then
\begin{equation*}
  H[c|\cinfuv] = H[c|\cinfzw] + H[\cinfzw|\cinfuv] + \ln\left( \frac z u\right) (\rho - \rho(\cinfzw)) + \ln \left(\frac w v\right) (\sigma - \sigma(\cinfzw))
\end{equation*}
holds true.

    \end{lemma}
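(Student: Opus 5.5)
The identity is an algebraic consequence of expanding $\Psi$ termwise. The only real work is justifying that all the series involved converge absolutely, so that they may be split and recombined. Implicitly, $0<z,w,u,v$ with $(z,w),(u,v)\in\Ex$, so $\cinfzw$ and $\cinfuv$ have finite mass. I would first note that $\sumrs \cinfzw\rs \le \rho(\cinfzw)+\sigma(\cinfzw)<\infty$, and likewise for $\cinfuv$.

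For fixed $(r,s)$ with $c\rs>0$, I would write
\[
\cinfuv\rs \Psi\Big(\frac{c\rs}{\cinfuv\rs}\Big) = c\rs\ln c\rs - c\rs\ln\cinfuv\rs + \cinfuv\rs - c\rs .
\]
Then I would use $\ln\cinfuv\rs = \ln\cinfzw\rs + r\ln\frac u z + s\ln\frac v w$ to obtain the pointwise identity
\[
\cinfuv\rs \Psi\Big(\frac{c\rs}{\cinfuv\rs}\Big) = \cinfzw\rs \Psi\Big(\frac{c\rs}{\cinfzw\rs}\Big) + \cinfuv\rs - \cinfzw\rs + \Big(r\ln\frac z u + s\ln\frac w v\Big)c\rs .
\]
For $c\rs=0$ the same identity holds with the convention $0\ln0=0$. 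Analogously, with $c=\cinfzw$ this gives
\[
\cinfuv\rs\Psi\Big(\frac{\cinfzw\rs}{\cinfuv\rs}\Big) = \cinfuv\rs-\cinfzw\rs + \Big(r\ln\frac z u + s\ln\frac w v\Big)\cinfzw\rs .
\]

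Subtracting the second identity from the first, the term $\cinfuv\rs-\cinfzw\rs$ cancels. The pointwise difference is therefore
\[
\cinfzw\rs\Psi\Big(\frac{c\rs}{\cinfzw\rs}\Big) + \Big(r\ln\frac z u + s\ln\frac w v\Big)(c\rs-\cinfzw\rs).
\]
Summing over $\Omega$, the last term sums to $\ln\frac zu(\rho-\rho(\cinfzw))+\ln\frac wv(\sigma-\sigma(\cinfzw))$. This uses that $\sum r c\rs=\rho$, $\sum s c\rs=\sigma$ and that $\rho(\cinfzw),\sigma(\cinfzw)$ are finite, so all these series converge absolutely. The remaining sum is $H[c|\cinfzw]$, whose terms are nonnegative since $\Psi\ge0$, so it has a well-defined value in $[0,\infty]$. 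Hence the sum of the differences equals $H[c|\cinfzw]$ plus a finite quantity. Similarly, $H[\cinfzw|\cinfuv]$ is a sum of nonnegative terms whose finite value equals $H[z,w|u,v]$ from Definition \ref{steadyStatesEntropy}, since it is a combination of absolutely convergent series.

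The main obstacle is splitting infinite sums legitimately when $H[c|\cinfzw]$ might be $+\infty$. I would handle this by summing the pointwise identity over the finite sets $\{r+s\le N\}$, where everything is finite. As $N\to\infty$, the partial sums of both relative entropies are monotone by nonnegativity of $\Psi$, and the remaining partial sums converge absolutely. This yields the identity in $[0,\infty]$: $H[c|\cinfuv]=\infty$ iff $H[c|\cinfzw]=\infty$, and otherwise equality holds.
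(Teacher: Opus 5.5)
Your proof is correct and follows the paper's argument. Like the paper, you expand $\Psi$, use $\cinfzw\rs/\cinfuv\rs=(z/u)^r(w/v)^s$ together with the mass constraints, and recognise the remaining terms as $H[\cinfzw|\cinfuv]$; your truncation to $\{r+s\le N\}$ to justify splitting the series (including the case $H[c|\cinfzw]=+\infty$, which can occur without \eqref{assCoeff}) is extra rigour that the paper's direct calculation leaves implicit.
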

\begin{proof}
  We calculate directly
\begin{equation*}
  \begin{split}
    H[c|\cinfuv] 
    \overset{\text{def. }\Psi} &= \sumrs c\rs \ln\left(\frac {c\rs}{\cinfuv\rs}\right) + \cinfuv\rs - c\rs\\
    &= H[c|\cinfzw] + \sumrs  r c\rs \ln\left(\frac{z}{u}\right) + s c\rs \ln\left(\frac w v \right) +\cinfuv\rs - \cinfzw\rs \\
    \overset{c\in X^+_{\rho,\sigma}} &= H[c|\cinfzw] + \rho \ln\left(\frac{z}{u}\right) + \sigma \ln\left(\frac w v \right) + \sumrs \cinfuv\rs - \cinfzw\rs \\
    &= H[c|\cinfzw] + H[\cinfzw|\cinfuv] + \ln\left( \frac z u\right) (\rho - \rho(\cinfzw)) + \ln \left(\frac w v\right) (\sigma - \sigma(\cinfzw)).\qedhere
  \end{split}
\end{equation*}

\end{proof}
Theorem \ref{steadyStateLimit} allows to determine the minimum of $H$. Remember that $\Psi\ge0$ and hence $H\ge 0.$
    \begin{theorem}\textit{(Minimising the relative entropy)}\label{relativeEntropyMinimum}\\
        For fixed $\rho,\sigma >0$, let $\cinfzw$ be the steady state that satisfies \eqref{steadyStateLimitEq}. Then, we have 
\begin{equation*}
  \inf\limits_{c\in X^+_{\rho,\sigma}} H[c|\cinfzw] = 0.
\end{equation*}

    \end{theorem}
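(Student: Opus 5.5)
Since $H\ge 0$, it suffices to construct a sequence $c^n\in X^+_{\rho,\sigma}$ with $H[c^n|\cinfzw]\to 0$. The natural candidates are the finite steady states from Lemma \ref{finiteSteadyStateDB}. Fix any sequence of finite regions $\Gamma_n\to\Omega$. Let $(z^{\Gamma_n},w^{\Gamma_n})$ be the unique pair with $\rho^{\Gamma_n}(\cbar^{\Gamma_n})=\rho$ and $\sigma^{\Gamma_n}(\cbar^{\Gamma_n})=\sigma$. Set
\[
c^n\rs\coloneqq (z^{\Gamma_n})^r(w^{\Gamma_n})^sQ\rs\,\indicator{(r,s)\in\Gamma_n}.
\]
By construction $c^n\in X^+_{\rho,\sigma}$. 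Theorem \ref{steadyStateLimit} gives $z^{\Gamma_n}\to z$ and $w^{\Gamma_n}\to w$, where $(z,w)\in\Ex$ is exactly the pair characterised by \eqref{steadyStateLimitEq}. This is precisely the steady state in the statement.

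Next I compute the relative entropy, splitting into the sum over $\Gamma_n$ and the sum over its complement. On $\Omega\setminus\Gamma_n$ we have $c^n\rs=0$, so $\cinfzw\rs\Psi(0)=\cinfzw\rs$. Hence
\[
H[c^n|\cinfzw]=H^{\Gamma_n}[z^{\Gamma_n},w^{\Gamma_n}|z,w]+\sum_{(r,s)\notin\Gamma_n}\cinfzw\rs .
\]
The tail term tends to zero because $\sumrs \cinfzw\rs\le\rho(\cinfzw)+\sigma(\cinfzw)<\infty$ (as $(z,w)\in\Ex$) and $\Gamma_n\to\Omega$, so dominated convergence applies. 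The first term is exactly the quantity shown to vanish in the proof of Theorem \ref{steadyStateLimit}.

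For completeness I would redo that step explicitly. Writing out the definition gives
\[
H^{\Gamma_n}[z^{\Gamma_n},w^{\Gamma_n}|z,w]=\rho\ln\frac{z^{\Gamma_n}}{z}+\sigma\ln\frac{w^{\Gamma_n}}{w}+\sum_{\Gamma_n}\cinfzw\rs-c^n\rs .
\]
The logarithms tend to zero since $z,w>0$. The sum $\sum_{\Gamma_n}\cinfzw\rs$ converges to $\sumrs\cinfzw\rs$. For $\sum c^n\rs$ I use that $c^n\in B^+_{\rho,\sigma}$ and that $c^n\rs\to\cinfzw\rs$ pointwise. By Propositions \ref{weakTopology} and \ref{weakMetric}, this gives $d(c^n,\cinfzw)\to 0$, and therefore $\sum c^n\rs\to\sumrs\cinfzw\rs$.

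The main obstacle is this last convergence of $\sum c^n\rs$. It is what forces the use of the weak* metric rather than mere pointwise convergence. Mass may escape to infinity when $\rho(\cinfzw)<\rho$, but the zeroth moment is still controlled: the tail $\sum_{r+s\ge N}c^n\rs$ is bounded by $(\rho+\sigma)/N$, uniformly in $n$. With that uniform bound the conclusion $\inf_{X^+_{\rho,\sigma}}H[\,\cdot\,|\cinfzw]=0$ follows.
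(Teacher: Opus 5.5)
Your proposal is correct and is essentially the paper's proof: the paper also tests with the truncated finite steady states (taking $\Gamma=\{r+s\le N\}$), uses Theorem \ref{steadyStateLimit} to get $z_N\to z$, $w_N\to w$, and reduces $H[c^N|\cinfzw]$ to $\rho\ln(z_N/z)+\sigma\ln(w_N/w)+\sum(\cinfzw\rs-c^N\rs)$, whose last term vanishes by Proposition \ref{weakMetric}. Your splitting into $H^{\Gamma_n}$ plus the tail of $\cinfzw$, and your explicit uniform bound $\sum_{r+s\ge N}c^n\rs\le(\rho+\sigma)/N$, are only cosmetic refinements of that same argument.
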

\begin{proof}
  For $N\in\N$ let $z_N$ and $w_N$ be such that 
\begin{equation*}
  \sumrs[1][N] r  z_N^r w_N^s Q\rs = \rho \quad\text{ and }\quad 
  \sumrs[1][N] s  z_N^r w_N^s Q\rs = \sigma,
\end{equation*}
which exists due to Lemma \ref{finiteSteadyStateDB}. We denote $c^N\rs \coloneqq \mathds{1}_{\{r+s\le N\}} z_N^r w_N^s Q\rs$.  By Theorem \ref{steadyStateLimit} we know that $z_N \to z$ and $w_N \to w$ and in particular $c^N\weakstar \cinfzw$. Then we get for sufficiently large $N$
\begin{equation*}
  \begin{split}
    H[c^N|\cinfzw] &= \ln\left(\frac{z_N}{z}\right)\sumrs[1][N] r c^N\rs 
                    + \ln\left(\frac{w_N}{w}\right)\sumrs[1][N] s c^N\rs 
                    -\sumrs[1][N] c^N\rs + \sumrs \cinfzw \\
                   &=\underbrace{\ln\left(\frac{z_N}{z}\right)}_{\to 0}\rho 
                    + \underbrace{\ln\left(\frac{w_N}{w}\right)}_{\to 0}\sigma
                    + \underbrace{\sumrs \cinfzw\rs - c^N\rs}_{\to 0 \text{ by Prop \ref{weakMetric}}} \to 0.
  \end{split}
\end{equation*}

\end{proof}
Together with Lemma \ref{relativeEntropyTwoSteadyStates}, this characterises the minimising sequences.
    \begin{corollary}\textit{(Minimising sequences)}\label{relativeEntropyMinimizingSequences}\\
        For fixed $\rho,\sigma >0$, let $\cinfzw$ be the steady state satisfying (\ref{steadyStateLimitEq}). Any minimising sequence $(c^n)\subset X^+_{\rho,\sigma}$ of any $H[c^n|\cinfuv]$ satisfies $c^n \weakstar \cinfzw$.

    \end{corollary}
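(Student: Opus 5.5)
We read the statement as: for any $0<u,v$ with $(u,v)\in\Ex$, if $(c^n)\subset X^+_{\rho,\sigma}$ satisfies $H[c^n|\cinfuv]\to\inf_{c\in X^+_{\rho,\sigma}} H[c|\cinfuv]$, then $c^n\weakstar\cinfzw$. The plan has two steps: first identify the infimum via Lemma \ref{relativeEntropyTwoSteadyStates} and Theorem \ref{relativeEntropyMinimum}, then convert $H[c^n|\cinfzw]\to 0$ into weak* convergence.

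\textbf{Step 1: the infimum.} For every $c\in X^+_{\rho,\sigma}$, Lemma \ref{relativeEntropyTwoSteadyStates} gives
\[
H[c|\cinfuv] = H[c|\cinfzw] + \kappa,\qquad \kappa \coloneqq H[\cinfzw|\cinfuv] + \ln\left(\frac z u\right)(\rho-\rho(\cinfzw)) + \ln\left(\frac w v\right)(\sigma-\sigma(\cinfzw)).
\]
The constant $\kappa$ does not depend on $c$. Taking the infimum and using Theorem \ref{relativeEntropyMinimum} gives $\inf_{X^+_{\rho,\sigma}} H[\cdot|\cinfuv]=\kappa$, and $\kappa$ is finite because $H\ge0$ and the right-hand side is finite. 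Hence every minimising sequence satisfies $H[c^n|\cinfzw]\to 0$.

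\textbf{Step 2: weak* convergence.} Since $c^n\in X^+_{\rho,\sigma}\subset B^+_{\rho,\sigma}$, the norms are bounded. By Proposition \ref{weakTopology} it therefore suffices to show $c^n\rs\to\cinfzw\rs$ for each fixed $(r,s)$. Every summand of $H$ is nonnegative, so
\[
\cinfzw\rs\,\Psi\big(c^n\rs/\cinfzw\rs\big)\le H[c^n|\cinfzw]\to 0.
\]
Here $\cinfzw\rs>0$ because $z,w>0$. The function $\Psi$ is continuous, strictly convex, and vanishes only at $1$. Hence $\Psi(x_n)\to0$ forces $x_n\to1$: on $[0,\infty)\setminus(1-\delta,1+\delta)$ the function $\Psi$ is bounded below by $\min(\Psi(1-\delta),\Psi(1+\delta))>0$, since it is monotone on each side of $1$. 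Therefore $c^n\rs\to\cinfzw\rs$. Alternatively, the subsequence-compactness argument of Proposition \ref{weakMetric} gives the same conclusion.

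The one point needing care is whether $\cinfzw$ is the steady state from \eqref{steadyStateLimitEq} with $z,w>0$. Theorem \ref{steadyStateLimit} provides exactly this, so $\cinfzw\rs>0$ and $H[\cdot|\cinfzw]$ is well defined. No step is a real obstacle; the substance lies in the already-proved Theorem \ref{relativeEntropyMinimum}.
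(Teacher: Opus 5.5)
Your proof is correct and follows the paper's argument. You use Lemma \ref{relativeEntropyTwoSteadyStates} to see that $H[c|\cinfuv]$ and $H[c|\cinfzw]$ differ by a $c$-independent constant on $X^+_{\rho,\sigma}$, invoke Theorem \ref{relativeEntropyMinimum} to get $H[c^n|\cinfzw]\to 0$, and conclude pointwise convergence from $\Psi$ vanishing only at $1$; you additionally spell out details the paper leaves implicit, namely the explicit constant, the lower bound on $\Psi$ away from $1$, and the mass bound needed to pass from pointwise to weak* convergence via Proposition \ref{weakTopology}.
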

\begin{proof}
  By Lemma \ref{relativeEntropyTwoSteadyStates} we know that $c^n$ is also a minimising sequence of $H[c^n|\cinfzw]$. By Theorem \ref{relativeEntropyMinimum}, we know that $H[c^n|\cinfzw] \to 0$. And since $\Psi(x)$ has a unique zero point at $x=1$, we get that $c^n\rs \to \cinfzw\rs$ for all $r,s$.

\end{proof}
In fact, we can go one step further and show that any mass escaping to infinity has to concentrate in regions of $\Omega$, where $\frac 1 {r+s} \ln(\cinfzw\rs) = 0.$ To do this, we essentially copy the duality argument for $\Psi(1+x)$, that was already used in \cite{jabin:convergenceRate}, to derive a Csiszár--Kullback type inequality.
    \begin{corollary}\textit{(Concentration of mass)}\label{concentrationOfMass}\\
        For fixed $\rho,\sigma >0$, let $\cinfzw$ be the steady state satisfying (\ref{steadyStateLimitEq}) and $(c^n)\subset X^+_{\rho,\sigma}$ be a minimising sequence of $H[c^n|\cinfzw]$. Then, for any $M\in \N$ and $\delta>0$, we have as $n\to\infty$
\begin{equation}
  \sumgamma[\Gamma_{M,\delta}] (r+s) |c^n\rs - \cinfzw\rs| \to 0, \text{ where }
  \Gamma_{M,\delta} \coloneqq \left\{(r,s)\in \Omega\,\middle|\, r+s\le M \text{ or }\frac{\ln(\cinfzw\rs)}{r+s} \le - \delta\right\}.
\end{equation}

    \end{corollary}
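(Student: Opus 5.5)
The plan is to use the variational (Donsker–Varadhan type) inequality for $\Psi$, as in \cite{jabin:convergenceRate}. By Theorem \ref{relativeEntropyMinimum} we have $H[c^n|\cinfzw]\to 0$. Convex duality of $\Psi(x)=x\ln x+1-x$ gives, for every $x\ge0$ and every $\lambda\in\R$,
\[ \lambda (x-1) \le \Psi(x) + e^{\lambda}-1-\lambda. \]
Writing $x = c^n\rs/\cinfzw\rs$ and multiplying by $\cinfzw\rs$, we get for any weights $\lambda\rs$ and any sign choices
\[ \lambda\rs |c^n\rs-\cinfzw\rs| \le \cinfzw\rs\Psi\Big(\tfrac{c^n\rs}{\cinfzw\rs}\Big) + \cinfzw\rs\big(e^{|\lambda\rs|}-1-|\lambda\rs|\big), \]
where the sign is absorbed by applying the inequality with $\pm\lambda\rs$.

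The set splits into two parts. The finitely many indices with $r+s\le M$ are handled directly: Corollary \ref{relativeEntropyMinimizingSequences} gives pointwise convergence $c^n\rs\to\cinfzw\rs$, so their contribution tends to $0$.

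For the remaining indices, with $r+s>M$ and $\ln(\cinfzw\rs)\le-\delta(r+s)$, we choose $\lambda\rs = \eta(r+s)$ for a fixed small $\eta\in(0,\delta)$. Summing the inequality over these indices gives
\[ \eta\sum (r+s)|c^n\rs-\cinfzw\rs| \le H[c^n|\cinfzw] + \sum \cinfzw\rs e^{\eta(r+s)}. \]
The first term on the right tends to $0$ as $n\to\infty$. The second is bounded by $\sum e^{-(\delta-\eta)(r+s)}$ over $r+s>M$; this converges because the number of indices with $r+s=m$ is $m+1$, so the sum is a geometric-type tail.

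That tail does not vanish for fixed $M$, so we refine with a cutoff $L\ge M$. On indices with $r+s\le L$ we again use pointwise convergence, since there are finitely many. On the indices with $r+s>L$ in $\Gamma_{M,\delta}$ we apply the estimate above, giving
\[ \limsup_{n\to\infty} \sum (r+s)|c^n\rs-\cinfzw\rs| \le \eta^{-1}\sum_{m>L}(m+1)e^{-(\delta-\eta)m}. \]
Letting $L\to\infty$ then shows the full sum over $\Gamma_{M,\delta}$ tends to $0$.

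The main obstacle I anticipate is the choice of $\lambda$ growing linearly in $r+s$. The dual term is $\cinfzw\rs(e^{\lambda}-1-\lambda)$, and it stays summable only because $\cinfzw\rs\le e^{-\delta(r+s)}$ on the tail region. This is exactly why the region is defined through $\ln(\cinfzw\rs)/(r+s)\le-\delta$. Outside this region no such control holds, and mass may escape there.
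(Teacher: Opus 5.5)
Your proof is correct and follows essentially the paper's argument. Both use the Fenchel--Young inequality for $\Psi$ with dual $e^{y}-1-y$, applied with weights proportional to $r+s$, and the dual term is summable precisely because $\cinfzw\rs\le e^{-\delta(r+s)}$ on the tail of $\Gamma_{M,\delta}$. The only difference is how the dual term is made small. The paper uses the scaling $f^*(\eps y)\le \eps^2 f^*(y)$ to put a factor $\eps$ in front of it and handles all of $\Gamma_{M,\delta}$ in one estimate, letting $n\to\infty$ and then $\eps\to 0$. You keep $\eta$ fixed and instead cut off at $r+s=L$: Corollary \ref{relativeEntropyMinimizingSequences} handles the finitely many indices with $r+s\le L$, and the tail $\sum_{m>L}(m+1)e^{-(\delta-\eta)m}$ vanishes as $L\to\infty$.
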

\begin{proof}
  Define $f(x) \coloneqq \Psi(1+x)$ for $x\ge0$ and then for $y\ge0$ its young complement via
\[f^*(y) \coloneqq \sup\limits_{x\ge 0}\big(x y - f(x)\big) = e^y - 1 - y,\]
so that for any $x,y \ge0$ we have $x y \le f(x) + f^*(y).$ From these definitions, we see for all $x,y\ge 0$ 
\begin{equation}\label{properties:ffdual}
  f(|x-1|) \le \Psi(x)\text{ and } f^*(\eps y) \le \eps^2 f^*(y) \text{ for  any }0\le \eps\le 1.
\end{equation}
If we now set $x\rs = \frac {c\rs^n}{\cinfzw\rs}$, we find for all $0<\eps<1$
\begin{equation}
  \begin{split}
    \sumgamma[\Gamma_{M,\delta}] (r+s) |c^n\rs - \cinfzw\rs|
    &= \frac{2}{\eps\delta}\sumgamma[\Gamma_{M,\delta}] \frac{\eps \delta} 2 (r+s) \cinfzw\rs |x\rs - 1|\\
  \overset{\text{young}}&\le \frac{2}{\eps\delta}\sumgamma[\Gamma_{M,\delta}]\cinfzw\rs f(|x\rs - 1|) + \cinfzw\rs f^*\Big(\frac{\eps \delta} 2 (r+s)\Big)\\
  \overset{\text{\eqref{properties:ffdual}}}&\le \frac 2 {\eps \delta} \underbrace{H[c^n|\cinfzw]}_{\mathclap{\to 0 \text{ by Theorem \ref{relativeEntropyMinimum}}}} + \frac {2\eps}{\delta} \underbrace{\sumgamma[\Gamma_{M,\delta}] \cinfzw\rs f^*\Big(\frac \delta 2 (r+s)\Big)}_{\eqqcolon (*)}.
  \end{split}
\end{equation}
If $(*)$ is bounded we can make the right hand side arbitrarily small by first taking $\eps$ small and then $n$ large. To bound $(*)$, note that $f^*(y) \le e^y$ and hence by the definition of $\Gamma_{M,\delta}$
\[\sumgamma[\Gamma_{M,\delta}] \cinfzw\rs f^*\Big(\frac \delta 2 (r+s)\Big)
\le \sumrs[1][M] \cinfzw\rs f^*\Big(\frac \delta 2 (r+s)\Big) + \sumrs e^{-\frac {\delta}2 (r+s)}  < \infty.\]

\end{proof}
Also, we can determine what the minimal relative entropy for any $c$ with given masses $\rho,\sigma$ is.
    \begin{corollary}\textit{(Minimal relative entropy)}\label{relativeEntropyMinimal}\\
        For fixed $\rho,\sigma >0$, let $\cinfzw$ be the steady state satisfying (\ref{steadyStateLimitEq}). Then, we have for any $c \in X^+_{\rho,\sigma}$
\begin{equation*}
  H[c|\cinfuv] \ge H[c|\cinfzw] + H[\cinfzw|\cinfuv] \ge H[c|\cinfzw].
\end{equation*}

    \end{corollary}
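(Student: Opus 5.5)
The plan is to combine Lemma \ref{relativeEntropyTwoSteadyStates} with the variational inequality \eqref{steadyStateLimitEq} characterising $(z,w)$. Fix $c\in X^+_{\rho,\sigma}$ and $0<u,v$ with $(u,v)\in\Ex$, so that $H[c|\cinfuv]$ is defined. Lemma \ref{relativeEntropyTwoSteadyStates} yields the exact identity
\begin{equation*}
  H[c|\cinfuv] = H[c|\cinfzw] + H[\cinfzw|\cinfuv] + \ln\left( \frac z u\right) (\rho - \rho(\cinfzw)) + \ln \left(\frac w v\right) (\sigma - \sigma(\cinfzw)).
\end{equation*}
Since $(z,w)$ is the pair from Theorem \ref{steadyStateLimit} with $\rho_0=\rho$ and $\sigma_0=\sigma$, inequality \eqref{steadyStateLimitEq}, applied to this particular $(u,v)$, says exactly that the last two summands have a nonnegative sum. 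Dropping them gives the first inequality $H[c|\cinfuv]\ge H[c|\cinfzw]+H[\cinfzw|\cinfuv]$.

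For the second inequality it suffices to observe that $H[\cinfzw|\cinfuv]\ge 0$. Here $H[\cinfzw|\cinfuv]$ in Lemma \ref{relativeEntropyTwoSteadyStates} is the relative entropy of Definition \ref{relativeEntropy} evaluated at the sequence $c=\cinfzw$. It equals $\sumrs \cinfuv\rs\Psi(\cinfzw\rs/\cinfuv\rs)$, which is nonnegative because $\Psi\ge0$.

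The only point needing care, and which I expect to be the main obstacle, is justifying the identity of Lemma \ref{relativeEntropyTwoSteadyStates} when some terms might be infinite. If $H[c|\cinfzw]=\infty$, I would check that $H[c|\cinfuv]=\infty$ as well. The difference of the summands is $r c\rs\ln(z/u)+s c\rs\ln(w/v)+\cinfuv\rs-\cinfzw\rs$, and this is absolutely summable because $c$ has finite masses and $\cinfzw,\cinfuv$ are summable. Since $(z,w),(u,v)\in\Ex$ and $r+s\ge1$, the mass bound gives $\sumrs \cinfzw\rs\le\rho(\cinfzw)+\sigma(\cinfzw)<\infty$, and similarly for $\cinfuv$. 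Consequently the splitting in the proof of Lemma \ref{relativeEntropyTwoSteadyStates} is legitimate in $[0,\infty]$, and both inequalities hold trivially when the left-hand side is infinite. This also ensures that $H[\cinfzw|\cinfuv]$ is finite, as it coincides with the finite expression in Definition \ref{steadyStatesEntropy}.
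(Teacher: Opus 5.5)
Your proposal is correct and matches the paper's proof, which simply says that the corollary is a direct consequence of the identity in Lemma \ref{relativeEntropyTwoSteadyStates} combined with \eqref{steadyStateLimitEq} (with $\rho_0=\rho$, $\sigma_0=\sigma$), the second inequality coming from $H[\cinfzw|\cinfuv]\ge 0$ because $\Psi\ge 0$. Your extra check that the identity stays valid in $[0,\infty]$ is a sound refinement the paper leaves implicit: the termwise difference is absolutely summable because $c$ has finite masses and $\cinfzw,\cinfuv$ are summable, and this matters because the corollary does not assume \eqref{assCoeff}, so $H[c|\cinfzw]$ may be infinite.
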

\begin{proof}
  The claim is a direct consequence of Lemma \ref{relativeEntropyTwoSteadyStates} combined with (\ref{steadyStateLimitEq}).

\end{proof}
As seen in \cite{jabin:convergenceRate} and later in \cite{canizo:EDEstimate}, the crucial quantity for an entropy -- entropy dissipation estimate is the relative entropy with respect to the quasi steady state $\cbar\rs \coloneqq \monr^r \mons^s Q\rs,$ where the monomer densities belong to a solution $c(t)$ of \eqref{mcBD}. Since there is no reason for $(\monr,\mons)$ to be in $\Ex$, $H[c|\cbar]$ is generally not well defined. But on finite subsets $\Gamma\subset\Omega,$ this is not a problem.
\[\text{ For }\Gamma \subset \Omega \text{ finite, we denote }H^\Gamma[c|\cbar] \coloneqq \sumgamma c\rs \Psi\left(\frac {c\rs}{\cbar\rs}\right).\]
With this, we can  generalise Corollary \ref{relativeEntropyMinimal} to the quasi steady state. In the following Lemma, we exploit the weak* semicontinuity of $\sumrs c\rs \ln(\cbar\rs)$ in the regions $\Gamma \subset \Omega$ where $\cbar\rs$ is large. We will use the notation $\cbargamma(\rho,\sigma)$ to be the finite steady state found in Lemma \ref{finiteSteadyStateDB}.
    \begin{lemma}\textit{(Semicontinuity)}\label{relativeEntropySemicontinuity}\\
        Fix $\rho,\sigma >0$ and assume \eqref{assCoeff}. Then, for any $\eps>0$, we can find $M<\infty$ and $\delta_1,\delta_2>0$, such that for any $N$ and 
\[ \Gamma \coloneqq \left\{ (r,s) \in \Omega \, \middle| \, r+s \le M \text{ or }  r+s \le N \text{ with } \ln\Big(\cbar\rs^{\frac 1 {r+s}}\Big) < -\delta_1\right\}\]
we have for any $c \in X^+_{\rho,\sigma}$ with $c_{1,0},c_{0,1} > 0$ and $\sumrs[N+1] (r+s) c\rs \le \delta_2$
\begin{equation*}
  H^\Gamma[c|\cbar] \ge H[c|\cinfzw] + H^\Gamma[\cbargamma(\rho^N,\sigma^N)|\cbar] -\eps,
\end{equation*}
where $\rho^N = \sumrs[1][N] r c\rs$ and $\sigma^N = \sumrs[1][N] s c\rs$ and $\cinfzw$ is the steady state satisfying \eqref{steadyStateLimitEq}.

    \end{lemma}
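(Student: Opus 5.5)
The plan is to prove the estimate through a three-point identity on the finite set $\Gamma$, in the spirit of Lemma \ref{relativeEntropyTwoSteadyStates}, with the finite steady state $\tilde c \coloneqq \cbargamma(\rho^N,\sigma^N)$ as the intermediate point. I read $H^\Gamma[c|\cbar]$ as $\sumgamma \cbar\rs \Psi(c\rs/\cbar\rs)$, which is the form for which Lemma \ref{relativeEntropyTwoSteadyStates} applies.

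Write $\tilde c\rs = \tilde z^r \tilde w^s Q\rs$ on $\Gamma$. Expanding $\Psi$ exactly as in the proof of Lemma \ref{relativeEntropyTwoSteadyStates} gives
\begin{equation*}
  H^\Gamma[c|\cbar] = H^\Gamma[c|\tilde c] + H^\Gamma[\tilde c|\cbar] + \ln\Big(\frac{\tilde z}{\monr}\Big)\big(\rho^\Gamma(c)-\rho^\Gamma(\tilde c)\big) + \ln\Big(\frac{\tilde w}{\mons}\Big)\big(\sigma^\Gamma(c)-\sigma^\Gamma(\tilde c)\big).
\end{equation*}
By Lemma \ref{finiteSteadyStateDB} we have $\rho^\Gamma(\tilde c)=\rho^N$ and $\sigma^\Gamma(\tilde c)=\sigma^N$. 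Hence the mass differences equal minus the Type I and Type II mass of $c$ on the excluded set $E \coloneqq \{r+s\le N\}\setminus\Gamma$. On $E$ we have $r+s>M$ and $\cbar\rs \ge e^{-\delta_1(r+s)}$.

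The proof then has three steps.

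\textbf{Step 1: $H^\Gamma[c|\tilde c]$ against $H[c|\cinfzw]$.} Since $\rho-\rho^N$ and $\sigma-\sigma^N$ are at most $\delta_2$, and $\Gamma$ contains $\{r+s\le M\}$ with $M$ large, the proof of Theorem \ref{steadyStateLimit} shows that $(\tilde z,\tilde w)$ is close to $(z,w)$. This uses only the inclusion $\Gamma \supset \{r+s\le M\}$ and the continuity of $(z^\Gamma,w^\Gamma)$ in the masses from Lemma \ref{finiteSteadyStateDB}.

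The contribution to $H[c|\cinfzw]$ from outside $\Gamma$ must also be small. Split it into two parts:
\begin{itemize}
  \item On $\{r+s>N\}$, the quantity $\sumrs[N+1] c\rs \ln(c\rs/\cinfzw\rs)$ is controlled by $\delta_2$. This uses $|\ln Q\rs| \in \bigO(r+s)$ from \eqref{assCoeff} and the $x|\ln x|$ bounds from the proof of Lemma \ref{entropyStrongCont}.
  \item On $E$, I use Corollary \ref{concentrationOfMass}-type control: there $c\rs$ carries little entropy relative to $\cinfzw$, after choosing $\delta_1$ small relative to the gap in \eqref{steadyStateLimitEq}.
\end{itemize}
Together these give $H^\Gamma[c|\tilde c] \ge H[c|\cinfzw] - \eps/2$.

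\textbf{Step 2: discarding $H^\Gamma[\tilde c|\cbar]$.} This term already appears on the right-hand side of the claim, so it only has to be carried along.

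\textbf{Step 3: the cross terms.} It remains to show that
\begin{equation*}
  -\ln\Big(\frac{\tilde z}{\monr}\Big)\sum_{E} r c\rs - \ln\Big(\frac{\tilde w}{\mons}\Big)\sum_{E} s c\rs \ge -\frac{\eps}{2}.
\end{equation*}
This holds automatically wherever $\monr\le\tilde z$ and $\mons\le\tilde w$, so the remaining danger is the monomers being much larger than $\tilde z,\tilde w$. On $E$ the defining inequality $\ln(\cbar\rs^{1/(r+s)}) \ge -\delta_1$, combined with the upper bound in \eqref{assCoeff} on $Q\rs^{1/(r+s)}$, gives a lower bound on weighted geometric means of $\monr$ and $\mons$. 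However, the terms with a positive sign are those where $\monr$ is too \emph{large}.

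For that case I would use the masses instead: if $\monr \gg \tilde z$, then $\cbar\rs$ is large on a region containing $(1,0)$ and its neighbours in $\Gamma$. The term $H^\Gamma[\tilde c|\cbar]$, or the part of $H^\Gamma[c|\cbar]$ coming from $\{r+s\le M\}$, then grows like $\cbar\rs$. That growth dominates the logarithmic cross term, whose size is at most $\ln(\monr/\tilde z)(\rho+\sigma)$.

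\textbf{Main obstacle.} I expect this sign analysis of the cross terms to be the main difficulty. I would first try to absorb the cross terms into the $\sum_{\Gamma}(\cbar\rs - c\rs)$ part of $H^\Gamma[c|\cbar]$. If that fails, I would fall back on a separate case distinction on whether $\max(\monr/\tilde z,\mons/\tilde w)$ exceeds $e^{\delta_1}$, fixing $M$, $\delta_1$ and $\delta_2$ at the very end in that order.
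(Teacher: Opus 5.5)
\textbf{There is a genuine gap in Step 1.} Up to terms that are small once $(\tilde z,\tilde w)$ is close to $(z,w)$, you have $H[c|\cinfzw]-H^\Gamma[c|\tilde c]\approx\sum_{(r,s)\notin\Gamma}\cinfzw\rs\Psi(c\rs/\cinfzw\rs)$. So your claim $H^\Gamma[c|\tilde c]\ge H[c|\cinfzw]-\eps/2$ needs $\sum_{E}\cinfzw\rs\Psi(c\rs/\cinfzw\rs)$ to be small for \emph{every} admissible $c$.

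Nothing in the hypotheses controls $c$ on $E$. The constant $\delta_2$ only restricts the mass beyond $N$, and $E\subset\{M<r+s\le N\}$ may carry mass of order one. Meanwhile $\cinfzw\rs$ can be exponentially small on $E$. In the Kelvin model, $\frac{1}{r+s}\ln\cinfzw\rs$ is bounded away from $0$ except near at most one direction $r/(r+s)$. Membership in $E$ only says $\cbar\rs\ge e^{-\delta_1(r+s)}$, i.e.\ that the monomers of $c$ are (nearly) supersaturated along that direction, which is precisely the regime this lemma is designed for. A single cluster there carrying mass $m$ contributes roughly $\kappa m$, with $\kappa>0$ fixed, to $H[c|\cinfzw]$, and nothing to $H^\Gamma[c|\tilde c]$. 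Corollary \ref{concentrationOfMass} does not help: it concerns minimising sequences of $H[\cdot|\cinfzw]$, not an arbitrary $c\in X^+_{\rho,\sigma}$.

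\textbf{Your sign analysis in Step 3 is reversed.} The cross term equals $\sum_E c\rs\big[r\ln(\monr/\tilde z)+s\ln(\mons/\tilde w)\big]$. This is nonnegative when the monomers exceed $\tilde z,\tilde w$, and negative when they are smaller. So your fallback via the growth of $H^\Gamma[\tilde c|\cbar]$ treats the harmless case. Also, the upper bound in \eqref{assCoeff} only yields an error of order $\rho+\sigma$, not $\eps$.

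\textbf{The missing idea is that the uncontrolled pieces of Steps 1 and 3 cancel, so they must be kept together.} Write the cross term as $\sum_E c\rs\ln\cbar\rs-\sum_E c\rs\ln(\tilde z^r\tilde w^sQ\rs)$.
The first sum is $\ge-\delta_1(\rho+\sigma)$ by the definition of $E$.
The second sum is $-\sum_E c\rs\ln\cinfzw\rs$ up to $\sum_E c\rs\big(r\ln\frac{\tilde z}{z}+s\ln\frac{\tilde w}{w}\big)$, which is small. It therefore absorbs exactly the term $-\sum_E c\rs\ln\cinfzw\rs$ left over from Step 1.
What remains, $\sum_E(c\rs\ln c\rs-c\rs+\cinfzw\rs)$, is small because $E\subset\{r+s>M\}$.

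The paper builds this in from the start. It replaces $-\sum_\Gamma c\rs\ln\cbar\rs$ by $-\sum_{r+s\le N}c\rs\ln\cbar\rs$ at cost $\delta_1(\rho+\sigma)$. After that, $\sum_{r+s\le N}c\rs\ln(\cinfzw\rs/\cbar\rs)=\rho^N\ln\frac{z}{\monr}+\sigma^N\ln\frac{w}{\mons}$ depends only on the partial masses. So it never matters where $c$ puts its mass inside $\{r+s\le N\}$, and the identity with $\cbargamma(\rho^N,\sigma^N)$ is then pure algebra.

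\textbf{A smaller point on uniformity.} Closeness of $(\tilde z,\tilde w)$ to $(z,w)$ must hold uniformly over all admissible $\Gamma$ and all $(\rho^N,\sigma^N)$ within $\delta_2$ of $(\rho,\sigma)$. Lemma \ref{finiteSteadyStateDB} is a statement for a fixed $\Gamma$, so it does not give this. The paper obtains the uniformity from Theorem \ref{steadyStateLimit}, combined with the symmetrised relative entropy between the finite steady states on $\Gamma$ of masses $(\rho^N,\sigma^N)$ and $(\rho,\sigma)$, which is $\bigO(\delta_2)$.
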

\begin{proof}
  First, it holds
\begin{equation*}
  \begin{split}
    H^\Gamma[c|\cbar] &= \sumgamma c\rs \ln c\rs - c\rs  + \cbar\rs - c\rs \ln\cbar\rs\\
    &=  \sumgamma c\rs \ln c\rs - c\rs  + \cbar\rs -\sumrs[1][N] c\rs \ln\cbar\rs + \underbrace{\sum\limits_{\substack{(r,s) \in \Gamma^c \\ r+s \le N}} (r+s) c\rs \ln\left(\cbar\rs^{\frac 1 {r+s}}\right)}_{\ge -(\rho+\sigma)\delta_1 \ge -\eps}.
  \end{split}
\end{equation*}
Note, that we have already seen in the proof of Lemma \ref{entropyStrongCont}, that 
\[ \sumrs[M] c\rs \ln c\rs - c\rs \to 0 \text{ as }M\to\infty \text{ independent of } c\rs.\]
Therefore, we continue with 
\begin{equation*}
  \begin{split}
    &\sumgamma c\rs \ln c\rs - c\rs  + \cbar\rs -\sumrs[1][N] c\rs \ln\cbar\rs \\
    &\quad=  H[c|\cinfzw] - \underbrace{\sumgamma[\Gamma^c] c\rs\ln c\rs - c\rs}_{|\cdot| \le \eps \text{ via }M} + \sumrs c\rs \ln \cinfzw\rs -\cinfzw\rs + \sumgamma \cbar\rs - \sumrs[1][N] c\rs\ln\cbar\rs\\
    &\quad\ge -\eps + H[c|\cinfzw] + \sumrs[1][N] c\rs \ln\left(\frac{\cinfzw\rs}{\cbar\rs}\right)
    +\sumrs[N+1](r+s) c\rs \underbrace{\ln\left(\sqrt[r+s]{\cinfzw\rs}\right)}_{|\cdot| \le C \text{ by (\ref{assCoeff})}} -  \sumrs \cinfzw\rs + \sumgamma \cbar\rs\\
    &\quad\ge -\eps + H[c|\cinfzw] + \sumrs[1][N] c\rs \ln\left(\frac{\cinfzw\rs}{\cbar\rs}\right) -\delta_2 C -  \sumrs \cinfzw\rs + \sumgamma \cbar\rs.
  \end{split}
\end{equation*}
If we chose $\delta_2$ so small, that $C\delta_2\le \eps$ and use ${\frac{\cinfzw\rs}{\cbar\rs} = \big(\frac z {c_{1,0}}\big)^r \big(\frac w {c_{0,1}}\big)^s}$ , we find for ${\cbargamma = \cbargamma(\rho^N,\sigma^N)}$
\begin{equation*}
  \begin{split}
  H^\Gamma[c|\cbar] &\ge -3\eps + H[c|\cinfzw] + \rho^N \ln \Big(\frac z {c_{1,0}}\Big) + \sigma^N \ln \Big(\frac w {c_{0,1}}\Big) -  \sumrs \cinfzw\rs + \sumgamma \cbar\rs\\
    &= -3\eps + H[c|\cinfzw] + \sumgamma \cbargamma\rs \ln\Big(\frac{\cbargamma}{\cbar}\Big) + \rho^N \ln \Big(\frac z {\cbargamma_{1,0}}\Big) + \sigma^N \ln \Big(\frac w {\cbargamma_{0,1}}\Big)
    -  \sumrs \cinfzw\rs + \sumgamma \cbar\rs\\
    &= -3\eps + H[c|\cinfzw] + H^\Gamma[\cbargamma|\cbar] + \rho^N \ln \Big(\frac z {\cbargamma_{1,0}}\Big) + \sigma^N \ln \Big(\frac w {\cbargamma_{0,1}}\Big) -  \sumrs \cinfzw\rs + \sumgamma \cbargamma\rs\\
    &= -3\eps + H[c|\cinfzw] + H^\Gamma[\cbargamma|\cbar] + H^\Gamma[\cinfzw|\cbargamma]  \\
    &\qquad+ \ln \Big(\frac z {\cbargamma_{1,0}}\Big)(\rho^N - \rho^\Gamma(\cinfzw)) +  \ln \Big(\frac w {\cbargamma_{0,1}}\Big)(\sigma^N - \sigma^\Gamma(\cinfzw))-  \underbrace{\sumgamma[\Gamma^c] \cinfzw\rs }_{|\cdot| \le \eps \text{ via }M}.
  \end{split}
\end{equation*}
So it remains to show, that $\cbargamma_{1,0}$ is close to $z$ and $\cbargamma_{0,1}$ close to $w$. To do so, let $z_\Gamma,w_\Gamma$, be such that $\sumgamma r z_\Gamma^r w_\Gamma^s Q\rs = \rho$ and $\sumgamma s z_\Gamma^r w_\Gamma^s Q\rs = \sigma$. From Theorem \ref{steadyStateLimit}, we know, that $z_\Gamma \to z$ and $w_\Gamma \to w$, where the distance can be made arbitrarily small, by choosing $M$.
Finally, we know
\newcommand{\propzwn}{z_\Gamma^r w_\Gamma^s Q\rs}
\begin{equation*}
  H^\Gamma[\cbargamma|\propzwn] + H^\Gamma[\propzwn|\cbargamma] = \ln\left(\frac {\cbargamma_{1,0}}{z_\Gamma}\right) (\rho^N - \rho) +  \ln\left(\frac {\cbargamma_{0,1}}{w_\Gamma}\right) (\sigma^N - \sigma).
\end{equation*}
Since $\cbargamma_{1,0},\cbargamma_{0,1},z_\Gamma,w_\Gamma$ are bounded from above and below (as seen in the proof of Theorem \ref{steadyStateLimit}), the right hand side is arbitrarily small depending on $\delta_2$. But then the monomer terms on the left hand side are small, and in particular $|\cbargamma_{1,0} - z_\Gamma|$ and $|\cbargamma_{0,1} -w_\Gamma|$ are small.

\end{proof}

\section{Summary}
\subsection{Differences Between the One- and Two-Component Systems}\label{subsectionCoefficients}

In the general theory of Section \ref{basicProperties} the main difference is the difficulty of establishing the weak formulation \eqref{mcBDWeak} rigorously.
For the one-component system, any sequence $h_m$, for which $|h_m-h_{m-1}|$ is bounded, can be used as a test sequence. We cannot establish the analogous result for the two-component system. As we have seen, he problem is the $r+s = N$ boundary term in \eqref{weakmcBDfinite}. Generally speaking, our methods only allow us to prove the weak formulation for one-dimensional test sequences, i.e. $g\rs = h_r$ or $g\rs = h_s$ or $g\rs = h_{r+s}$ for $r+s$ large---not a satisfactory result, which is why we did not state it rigorously. The same problem weakened our uniqueness result (Theorem \ref{mcBDUniqueness}) and stopped us from establishing the entropy equation for any initial data (Theorem \ref{entropyEquation}). However, if one manages to establish a better uniqueness result, then Theorem \ref{entropyInequality} will still provide $\odv{}{t} V(c(t)) \le -D.$

In the two-component system, equilibria are parameterised over a two-dimensional manifold $\Ex$.  We have seen, that $\Ex$ can have very different shapes, depending on $Q\rs$ (Example \ref{steadyStateDBExample}). Nonetheless, we can describe all steady states as limits of finite steady states, by replacing the monotonicity structure of the one-component system $z \mapsto \rho(\cbar^z)$ by an appropriate convexity structure. This proved to be a key step in understanding the minimisation of the relative entropy $H[c|\cinfzw]$ over $X_{\rho,\sigma}^+$ (positive sequences with fixed Type I and Type II masses).

\subsection{The Problem of Long Time Behaviour}

From the entropy equation we may deduce, that any solution $c(t)$ will be close to the set of equilibria as $t\to\infty.$ However, proving that $c(t) \weakstar \cinfzw$ for some fixed $(z,w) \in \Ex$, as well as determining $(z,w)$ from the initial condition is considerably more difficult in the two-component system. The corresponding result in the one-component system \cite{penrose:foundations} heavily relies on the weak* continuity of $H[c|\cbar^{z_s}].$ But in the two-component system, $H[c|\cinfzw]$ is generally not weak* continuous for any $(z,w) \in \Ex$. Consider for example $Q\rs = \lambda^r \mu^s \binom{r+s}{r}e^{-(r+s)^{\frac 2 3}}$ from Example \ref{steadyStateDBExample} and let $(z,w) \in \Ex$, i.e. $\lambda z + \mu w \le 1.$ We then find 
\[ \sumrs c\rs \ln \cinfzw\rs = \sumrs (r+s) c\rs \ln\left( \sqrt[r+s]{\cinfzw\rs}\right).\]
We will now argue, that there are no $(z,w)\in \Ex$, so that $\sqrt[r+s]{\cinfzw\rs}\to 1$ as $r+s\to \infty,$ which shows that this term is not weak* continuous. We have 
\begin{equation}
  \begin{split}
    \cinfzw\rs = \exp\left( r\ln(\lambda z) + s \ln(\mu w) + \sum\limits_{i=1}^{r+s} \ln i - \sum\limits_{i=1}^{r} \ln i - \sum\limits_{i=1}^{s} \ln i  + \smallo(r+s)\right).
  \end{split}
\end{equation}
If we approximate $\sum\limits_{i=1}^{n} \ln i \approx \int_1^n \ln(x)\dd x =n(\ln n -1) +1$, we arrive at 
\[ \sqrt[r+s]{\cinfzw\rs} \approx \exp\left(\frac{r}{r+s} \ln(\lambda z) + \frac{s}{r+s}\ln(\mu w) - \frac{r}{r+s}\ln\left(\frac r {r+s}\right) - \frac{s}{r+s}\ln\left(\frac{s}{r+s}\right) \right).\]
But the function,
\[\xi \ln\left(\frac{\lambda z}\xi\right) + (1-\xi) \ln\left(\frac{\mu w}{(1-\xi)}\right) \not\equiv 0\]
is not uniformly $0$ on $\xi \in (0,1)$ for any $z,w$. Therefore, a new approach to determine the long time behaviour is needed. Furthermore, the maximum principle used to determine the exact equilibrium in the one-component case \cite{ball:refinedMaxPrinciple} essentially exploits the sign of the fluxes in the subcritical regime. In the two-component system, even if $\monr$ is small, $\mons$ may be large enough to ensure $(\monr,\mons) \not\in \Ex$ (as is clear from Figure \ref{figure:examplesDBE}), which can introduce a sign change in the fluxes. So there are further hurdles to overcome. We present a solution to these problems in \cite{paperLongTime}.

  \section*{Acknowledgements}
\addcontentsline{toc}{section}{Acknowledgements}
The author gratefully acknowledges the financial support of the Deutsche Forschungsgemeinschaft (DFG, German Research Foundation) through the collaborative research centre ``Analysis of criticality: from complex phenomena to models and estimates'' (CRC 1720, Project-ID 539309657) and the Bonn International Graduate School of Mathematics at the Hausdorff Center for Mathematics (EXC 2047/2, Project-ID 390685813).\\
I would like to thank Barbara Niethammer for numerous helpful discussions and guidance throughout this project.

  \printbibliography
\end{document}